\documentclass[10pt]{amsart}
\usepackage[all]{xy} 
\usepackage{amsfonts, amssymb}
\usepackage{tikz-cd} 
\usepackage{graphics, graphicx}
\usepackage{color}\definecolor{Green}{rgb}{0.0, 0.5, 0.0}

\newtheorem{thm}{Theorem}[section]
\newtheorem{cor}[thm]{Corollary}
\newtheorem{prop}[thm]{Proposition}
\newtheorem{lem}[thm]{Lemma}
\theoremstyle{definition}
\newtheorem{defn}[thm]{Definition}
\newtheorem{ex}[thm]{Example}

\newtheorem{rem}[thm]{Remark}
\newtheorem*{ack}{Acknowledgments}

\numberwithin{equation}{section}

\newcommand{\R}{\mathbb R}

\newcommand{\oo}{\bigcirc}
\newcommand{\ii}{\,|\,}
\newcommand{\e}{\mathfrak e}
\newcommand{\ve}{\mathfrak v}

\newcommand{\ri}{{\bf ri}}
\newcommand{\rbd}{{\bf rbd}}

\title{The Combinatorics of Directed Planar Trees}

\author[K.~Poirier]{Kate~Poirier}
  \address{Kate Poirier,
  Department of Mathematics, New York City College of Technology, City University of New York, 300 Jay Street, Brooklyn, NY 11201}
  \email{kpoirier@citytech.cuny.edu}

\author[T.~Tradler]{Thomas~Tradler}
  \address{Thomas Tradler,
  Department of Mathematics, New York City College of Technology, City University of New York, 300 Jay Street, Brooklyn, NY 11201}
  \email{ttradler@citytech.cuny.edu}

\begin{document}
\maketitle
\begin{abstract}
We give a geometric realization of the polyhedra governed by the structure of associative algebras with co-inner products, or more precisely, governed by directed planar trees. Our explicit realization of these polyhedra, which include the associahedra in a special case, shows in particular that these polyhedra are homeomorphic to balls. We also calculate the number of vertices of the lowest generalized associahedra, giving appropriate generalizations of the Catalan numbers. 
\end{abstract}

\setcounter{tocdepth}{1}
\tableofcontents

\section{Introduction}\label{SEC:Introduction}

The associahedron, or Stasheff polytope, is a convex polytope whose cellular structure is determined by the combinatorics of planar, rooted trees. In \cite{S1, S2} Jim Stasheff used these polytopes to study H-spaces up to homotopy, and in particular gave a geometric realization of the associahedron inside a cube. The associahedra appear in many settings in mathematics due to their fundamental definition, and here we note one instance which is relevant for our purposes, namely, the fact that their cellular chains may be used to define the operad of $A_\infty$-algebras (see \cite{MSS}), giving a resolution of the associative operad.

In this note, we describe a variation of these polytopes, which originally grew out of an attempt to model algebraically string topology operations as defined by Moira Chas and Dennis Sullivan in \cite{CS}. In fact, to do so, an essential ingredient consists of a model for the Poincar\'e duality structure of the underlying space. For example, in \cite{T}, the Poincar\'e duality structure was modeled via a non-degenerate, invariant inner-product with higher homotopies (which were called homotopy inner products). More generally, if one considers an invariant \emph{co}-inner product (with higher homotopies), one may drop the non-degeneracy condition, and still obtain string topology-like operations; this was defined in an algebraic setting in \cite{TZ}. In this setup one requires $n$-to-$m$-operations (i.e. maps $A^{\otimes n}\to A^{\otimes m}$) for each corolla having a \emph{cyclic} order on its inputs and outputs (satisfying the usual edge expansion conditions). Such an algebraic structure on a space $A$ was called a $V_\infty$ algebra in \cite{TZ}. 
It is our aim with this paper and two follow-up papers to clarify the combinatorics of this structure as well as identify operadic underpinnings of $V_\infty$ algebras, and furthermore identify the induced space of string topology operations with other models of this space of operations.

In this paper, we take a first step toward analyzing the structure of $V_\infty$ algebras. Using the combinatorics of directed planar trees with a cyclic order $\alpha$ on their exterior vertices (Definition \ref{DEF:directed-trees}), we define a cell complex $Z_\alpha$, our generalization of the associahedron, whose cells are indexed by precisely those trees.  This is done using and adding onto the well-known secondary polytope construction of the associahedron defined by Gelfand, Kapranov, and Zelevinsky (see e.g. \cite{GKZ}). We show in Section \ref{SEC:ConstructingZalpha}, that $Z_\alpha$ is homeomorphic to a disk, or more precisely, we show the following.
\setcounter{section}{3} \setcounter{thm}{9}
\begin{thm}
The space $Z_\alpha$ has the structure of a cell complex where the cells are given by the subspaces $Z_T$ for $T$ in $\mathcal{T}_\alpha$.
This structure is a cellular subdivision of the product of an associahedron and a simplex $K_{n_\alpha-1}\times \Delta^{k_\alpha-1}$ in $\R^{n_\alpha}\times \R^{k_\alpha}$, each with their own natural cell complex structures.
\end{thm}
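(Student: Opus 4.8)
The plan is to exploit the product structure of the target by realizing the two factors separately and then reassembling them. Recall that the associahedron $K_{n_\alpha-1}$ is realized by Gelfand--Kapranov--Zelevinsky as the secondary polytope of a convex polygon associated to $\alpha$, with its closed faces indexed by the planar trees underlying $\mathcal{T}_\alpha$: the top cell corresponds to the corolla and the vertices to the fully resolved (binary) trees. The simplex $\Delta^{k_\alpha-1}$ carries its standard cell structure, whose faces are indexed by the nonempty subsets of a $k_\alpha$-element set. I would begin by writing down, explicitly in the coordinates of $\R^{n_\alpha}\times\R^{k_\alpha}$ supplied by the construction of Section~\ref{SEC:ConstructingZalpha}, both the secondary-polytope realization of $K_{n_\alpha-1}$ in the first factor and the standard simplex in the second, and then record the resulting product cell structure, in which a cell is a pair $(F,G)$ with $F$ a face of $K_{n_\alpha-1}$ and $G$ a face of $\Delta^{k_\alpha-1}$.

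Next, for each directed tree $T\in\mathcal{T}_\alpha$ I would analyze the subspace $Z_T$ as a subset of this product. The undirected data of $T$ --- its underlying planar tree together with the cyclic order $\alpha$ --- should determine a face $F_T$ of $K_{n_\alpha-1}$, while the directedness, i.e.\ the distribution of inputs and outputs, should determine a face $G_T$ of $\Delta^{k_\alpha-1}$. The claim to verify is that $Z_T$ is a convex cell, homeomorphic to a ball whose dimension is read off from the combinatorial type of $T$, and that it lies inside the product cell $F_T\times G_T$. Extracting $\dim Z_T$ from the edge data of $T$ and confirming convexity directly from the explicit inequalities defining $Z_T$ is the technical heart of this step.

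With the individual cells understood, I would verify the three defining properties of a cellular subdivision. First, \emph{covering}: that $\bigcup_{T\in\mathcal{T}_\alpha}Z_T$ exhausts $K_{n_\alpha-1}\times\Delta^{k_\alpha-1}$, which should follow because every point of the product lies in the relative interior of a unique minimal product cell, and the directed trees refining that cell account for exactly the compatible directions at the point. Second, \emph{face-fitting}: that for $S,T\in\mathcal{T}_\alpha$ the intersection $Z_S\cap Z_T$ is either empty or a single common face $Z_U$, where $U$ is produced from $S$ and $T$ by the appropriate edge operation in the directed-tree poset, so that the $Z_T$ assemble into a regular cell complex whose face poset is precisely the poset of directed-tree degenerations. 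Third, \emph{refinement}: that each relatively open product cell is the disjoint union of the relatively open cells $Z_T$ whose associated pair $(F_T,G_T)$ equals that product cell, which together with the first two properties upgrades the cover to a genuine subdivision.

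The main obstacle I anticipate is the face-fitting and refinement step carried out inside a single product cell $F\times G$. Over a fixed planar tree and a fixed input/output distribution, the various admissible choices of directions on the internal edges should partition $F\times G$ into the pieces $Z_T$, and I must show that these pieces are themselves convex cells meeting precisely along faces indexed by the directed edge operations. This requires using the explicit secondary-polytope coordinates to check that the defining inequalities of neighboring cells $Z_T$ agree exactly on their shared boundary and that no lower-dimensional overlaps or gaps arise --- in other words, that the directed structure subdivides each product cell into a regular cell complex isomorphic to the poset of directed refinements of the underlying undirected tree. Establishing this poset isomorphism, together with the attendant convexity and boundary identifications, is where the bulk of the combinatorial work will lie.
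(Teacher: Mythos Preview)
Your overall architecture --- separate the associahedron factor from the simplex factor, identify each $Z_T$ as a product, then verify covering, disjointness, and boundary compatibility --- matches the paper's strategy. But there is a genuine misconception in how you treat the simplex factor, and it would derail the argument as written.

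You propose that the ``directedness, i.e.\ the distribution of inputs and outputs'' of $T$ determines a face $G_T$ of $\Delta^{k_\alpha-1}$, and that the various choices of internal-edge directions then subdivide the product cell $F_T\times G_T$. This is not how the simplex factor works. The input/output pattern on the \emph{external} edges is fixed once and for all by $\alpha$, so it cannot vary with $T$; and the piece $\Delta_T\subseteq\Delta^{k_\alpha-1}$ that the paper attaches to $T$ is \emph{not} a face of the standard simplex at all. Rather, $\Delta_T$ is a polytope cut out by a mixture of linear equalities (one for each edge of the essential spine labeled $\leftrightarrow$) and linear inequalities (one for each directed spine edge), where the constants are areas of subpolygons of $Q$ --- see the description of $\Lambda_T$ and Lemma~\ref{LEM:Delta_T}. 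For a fixed underlying Stasheff tree $S$, the collection $\{\Delta_T : S_T=S\}$ subdivides the \emph{entire} simplex $\Delta_Q$, not a proper face of it. So the only ``product cell'' containing $Z_T$ is $K_{S_T}\times\Delta_Q$; there is no intermediate $G_T$.

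Concretely, the paper proceeds by proving $Z_T=K_T\times\Delta_T$ with $\Delta_T$ characterized by those area (in)equalities (Lemma~\ref{LEM:Delta_T} and Corollary~\ref{COR:ZT=KTxDT}), then showing that $\Delta_T$ is homeomorphic to a product of simplices $\prod_\ve\Delta^{o_\ve-1}$ indexed by the internal vertices of the essential spine, and finally checking in Lemma~\ref{LEM:Z-cell-properties} that the relative interiors of the $Z_T$ partition $Z_\alpha$ and that $\rbd(Z_T)$ is the union of $Z_{T'}$ over edge expansions $T'$. Your plan can be salvaged by replacing the putative face $G_T$ with this polytope $\Delta_T$ and carrying out the convexity and boundary analysis using the explicit (in)equalities; but as stated, the step assigning a standard simplex face to the directed data has no content and the subsequent ``refinement within $F\times G$'' step would be refining the wrong object.
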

In the case where there are exactly two outgoing edges, and $\ell$ and $m$ incoming edges (between the two outgoing edges) these polyhedra are precisely the pairahedra as defined in \cite{T}; see Example \ref{EX:polyhedra}\eqref{EX:polyhedra-OO} below.

In Section \ref{SEC:Vertices} we investigate some of the combinatorics of $Z_\alpha$ by studying the number $C(\alpha)$ of vertices of $Z_\alpha$. We give a recursive formula for calculating $C(\alpha)$ in Proposition \ref{PROP:C(alpha)-recursion}. In the case where there is exactly one outgoing edge and, say, $\ell$ incoming edges these numbers are, of course, well known to be the Catalan numbers $C_{\ell-1}=\frac 1 \ell {2(\ell-1) \choose \ell-1}$. In the case where there are exactly two outgoing edges, and $\ell$ and $m$ incoming edges between them, we denote these numbers by $c_{\ell,m}$ and calculate them explicitly in Proposition \ref{PROP:c-lm}.
\setcounter{section}{4} \setcounter{thm}{2}\begin{prop}
\[
c_{\ell,m}= {2(\ell+1)\choose \ell+1} {2(m+1)\choose m+1}\cdot \frac{(\ell+1)(m+1)}{2(\ell+m+1)(\ell+m+2)}
\]
\end{prop}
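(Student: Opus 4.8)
The plan is to run the recursion of Proposition~\ref{PROP:C(alpha)-recursion} in the special case of two outgoing edges and to solve the resulting recursion for $c_{\ell,m}$ in closed form. First I would specialize that recursion to a corolla whose cyclic exterior data consists of two outputs separating $\ell$ inputs from $m$ inputs; the general recursion should then express $c_{\ell,m}$ as a sum over the ways of splitting off a fully resolved ``boundary'' subtree. Because a subtree attached to a single output contributes the associahedron vertex count, I expect the one-output Catalan numbers $C_{\ell-1}=\frac1\ell\binom{2(\ell-1)}{\ell-1}$ to enter as the elementary building blocks, with $c_{\ell,m}$ assembled from them by a convolution. The base of the induction is the slice $m=0$: here the proposed formula collapses, since with $a=\ell+1$ and $b=1$ one has $\binom{2}{1}\frac{a}{2a(a+1)}=\frac{1}{a+1}$, so that $c_{\ell,0}=\frac{1}{\ell+2}\binom{2(\ell+1)}{\ell+1}=C_{\ell+1}$, matching the Catalan count for the degenerate arc and anchoring the recursion.

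It is cleanest to carry out the bookkeeping with the bivariate generating function $D(x,y)=\sum_{\ell,m\ge0}c_{\ell,m}\,x^{\ell+1}y^{m+1}$. Writing $a=\ell+1$ and $b=m+1$, the proposed value is $c_{\ell,m}=\binom{2a}{a}\binom{2b}{b}\frac{ab}{2(a+b-1)(a+b)}$, whose only obstruction to factoring is the denominator $(a+b-1)(a+b)$, which depends on $a,b$ only through the total $a+b$. This is exactly the signature of the Euler operator $\theta=x\partial_x+y\partial_y$, which acts on the monomial $x^ay^b$ by the scalar $a+b$. I would therefore show that $D$ satisfies the Euler equation $\theta(\theta-1)D=E$, where, using $\sum_{a\ge1}a\binom{2a}{a}x^a=\frac{2x}{(1-4x)^{3/2}}$, the right-hand side is the elementary product $E(x,y)=\frac{2xy}{(1-4x)^{3/2}(1-4y)^{3/2}}$. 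Since $\theta(\theta-1)$ is diagonal in the monomial basis, inverting it merely divides the degree-$n$ part by $n(n-1)$; extracting the coefficient of $x^ay^b$ in $E$, namely $\binom{2a}{a}\binom{2b}{b}\frac{ab}{2}$, and dividing by $(a+b-1)(a+b)$ returns precisely the claimed formula, while the manifest $x\leftrightarrow y$ symmetry of $E$ reproduces the expected symmetry $c_{\ell,m}=c_{m,\ell}$.

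The \emph{main obstacle} is the first step of this route: translating the tree recursion of Proposition~\ref{PROP:C(alpha)-recursion} into the statement that $\theta(\theta-1)D=E$, that is, verifying that the combinatorial convolution governing how boundary subtrees attach is exactly the one whose generating function is the product $E$ once the coupled denominator has been stripped off by $\theta(\theta-1)$. Equivalently, if one prefers to avoid generating functions and argue by strong induction on $\ell+m$, the same content reappears as a single binomial-coefficient identity: substituting the closed form into the right-hand side of the specialized recursion and simplifying to the closed form of the left-hand side is a Vandermonde/hypergeometric summation, which can be discharged by a standard Zeilberger certificate. I expect the bulk of the work, and the only genuinely delicate point, to be this identification of the recursion's convolution with the factorized product $E$; once it is in place the closed form is immediate.
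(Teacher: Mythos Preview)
Your route is genuinely different from the paper's, and it is viable, but as written it is a plan rather than a proof: you correctly flag that the passage from the recursion to the Euler equation $\theta(\theta-1)D=E$ is the crux, and you do not carry it out.

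The paper \emph{does not} use Proposition~\ref{PROP:C(alpha)-recursion} here at all. Instead it exploits that a maximally expanded tree with two outputs has exactly one edge labeled $\leftrightarrow$; grafting a new root at that spot produces a Stasheff tree with $\ell+m+2$ leaves, and correcting for the over-count gives a recursion in $m$ (with $\ell+m$ fixed) which is matched to the closed form via a partial-Catalan-convolution identity (Lemma~\ref{LEM:Catalan-induction}). By contrast, specializing Proposition~\ref{PROP:C(alpha)-recursion} to two outputs yields
\[
c_{\ell,m}=C_{\ell+m}+\sum_{i=0}^{\ell-1}C_i\,c_{\ell-1-i,m}+\sum_{i=0}^{m-1}C_i\,c_{\ell,m-1-i},
\]
which for $G(x,y)=\sum_{\ell,m\ge0}c_{\ell,m}x^\ell y^m$ and $F(t)=\sum_n C_n t^n$ becomes the functional equation $G\cdot\bigl(1-xF(x)-yF(y)\bigr)=\dfrac{xF(x)-yF(y)}{x-y}$. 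Since $xF(x)=\tfrac12(1-\sqrt{1-4x})$, this solves in closed form to $G(x,y)=4\bigl(\sqrt{1-4x}+\sqrt{1-4y}\bigr)^{-2}$, and from here coefficient extraction (via your Euler-operator device or otherwise) finishes the job. So the ``main obstacle'' you name is a short explicit computation, not a missing idea; the paper's approach trades this generating-function algebra for a bijective over-count plus an elementary induction, at the price of the auxiliary Lemma~\ref{LEM:Catalan-induction}.
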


In the same way the associahedra $K_n$ are related to the concept of associativity, our new cell complexes $Z_\alpha$ are related to the concept of associtivity together with a symmetric and invariant co-inner product. 
For this reason, we refer to our spaces $Z_\alpha$ as {\em assocoipahedra}.
In fact, in a follow-up paper, we are planning to show that the cell complexes $Z_\alpha$ can be used to define the dioperad $V_\infty$, and with this show that $V_\infty$ is a resolution of a dioperad $V$ governing associative algebras with symmetric and invariant co-inner products.
This can then be used to show that the dioperad $V$ is Koszul. Furthermore, we are planning to show that the induced space of string topology operations for a $V_\infty$ algebra, as defined in \cite{TZ}, is homotopy equivalent to the more topological space of string topology operations defined in \cite{DPR}.  All of these follow-up results will however crucially rely on the fact that the cell complexes $Z_\alpha$ are homeomorphic to disks, which is the content of this paper.

\begin{ack}
The second author was supported in part by a grant from The City University of New York PSC-CUNY Research Award Program.
We thank Anton Dochtermann for useful comments about this paper.
\end{ack}
\setcounter{section}{1}

\section{Preliminaries on Directed Planar Trees}\label{SEC:PlanarTrees}

In this section, we define the precise notion of  ``$\alpha$-trees,'' directed trees that we consider in this paper. We show that the data of an $\alpha$-tree $T$ can be equivalently written as a Stasheff-type tree $S_T$ (with one outgoing edge only) and an ``essential spine'' $E_T$ that keeps the information of the directions of edges of $T$. Using this decomposition, we will show in the next section how 
the set of directed planar trees can be geometrically realized as a cellular subdivision of a Cartesian product of an associahedron and a simplex, $K_{n-1}\times \Delta^{k-1}$.

Let $\alpha$ be a sequence of incoming ``$\ii$'' and outgoing ``$\oo$'' labels; for example $\alpha=(\oo\oo\ii\ii\ii\oo\ii\oo)$. Let $k_\alpha$ be the number of outgoing labels of $\alpha$, $\ell_\alpha$ be the number of incoming labels, and $n_\alpha=k_\alpha+\ell_\alpha$ be the total number of labels. For $j=1,\dots, n_\alpha$, we denote by $\alpha(j)\in\{\ii, \oo\}$ the $j$th element in the sequence $\alpha$.

We can obtain an $\alpha$ as above from a directed planar tree with a chosen first external vertex as follows. Each external vertex can be given an $\ii$ or $\oo$ label depending on whether the adjacent edge points 
away from that vertex (coming into the tree) or towards that vertex (going out from the tree); see Figure \ref{FIG:alpha-tree}.
A linear order of these external vertices is given by the clockwise order determined by the plane, together with the choice of first external vertex.
We abuse notation by referring to both these external edges and vertices as incoming or outgoing.

\begin{defn}\label{DEF:directed-trees}
An $\alpha$-tree is a directed planar tree with at least one interior vertex, such that:
\begin{enumerate}
\item\label{choice-vertex}
there is a choice of one of the exterior vertices,
\item\label{exterior-cyclic-alpha-order}
the sequence of labels of the exterior vertices as incoming or outgoing according to the above procedure, starting from the chosen exterior vertex in \eqref{choice-vertex}, coincides with the given $\alpha$ 
\item\label{int-vertex}
every interior vertex has at least one outgoing edge, and
\item\label{no-one-one-vertex}
there are no bivalent vertices with one incoming and one outgoing edge.
\end{enumerate}
Note that by \eqref{int-vertex} and \eqref{no-one-one-vertex}, the only permitted bivalent vertices are those with two outgoing edges.
\begin{figure}[h]
\[ 
\includegraphics[scale=.8]{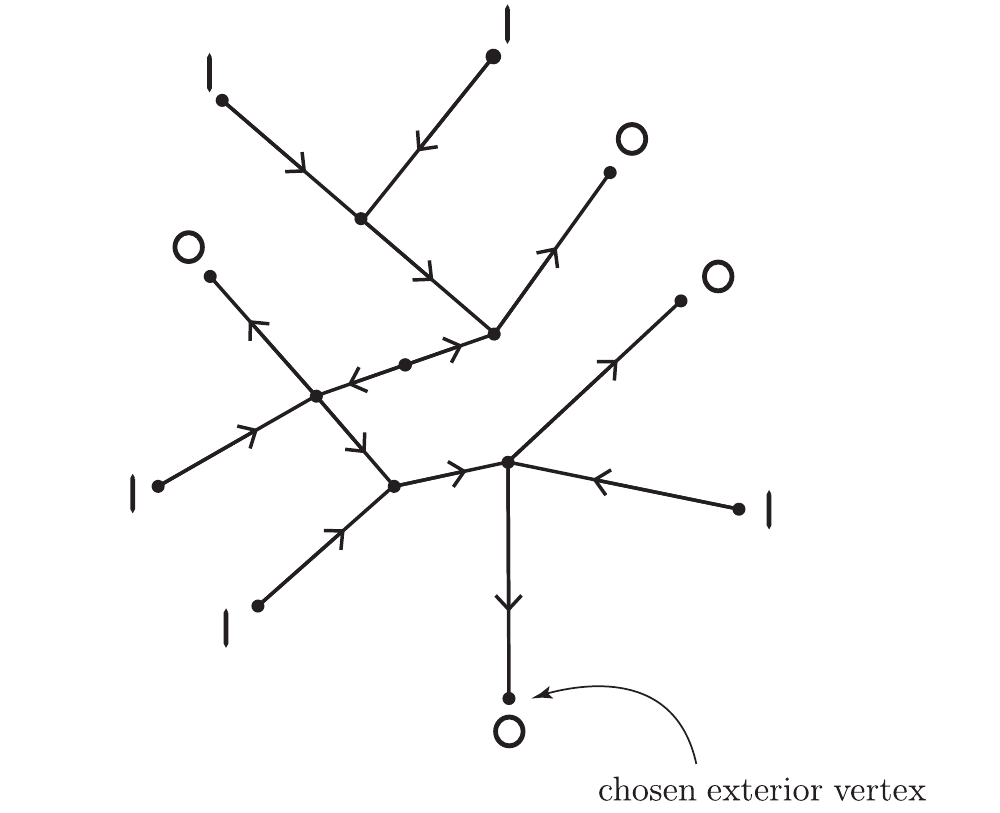}
\]
\caption{An $\alpha$-tree $T$, with $\alpha=(\oo\ii\ii\oo\ii\ii\oo\oo\ii)$}\label{FIG:alpha-tree}
\end{figure}
We define a Stasheff-type tree to be a $(\oo\ii\ii\dots\ii\ii)$-tree with $1$ outgoing exterior vertex and  $\ell\geq 2$ incoming exterior vertices. We denote the set of $\alpha$-trees by $\mathcal T_{\alpha}$.
\end{defn}

\begin{rem}
Conditions~(\ref{choice-vertex}) through~(\ref{no-one-one-vertex}) imply that the interior edges of a Stasheff-type tree must be directed toward the outgoing external vertex; every interior vertex has exactly one outgoing edge.
\end{rem}

Let $T$ and $T'$ be $\alpha$-trees. Then, $T'$ is called an edge expansion of $T$, if there are interior edges $e_1,\dots,e_k$ in $T'$, so that contracting these edges in $T'$ yields $T$, i.e. $T=T'/(e_1,\dots,e_k)$. (Note, that a collapse of any interior edge of an $\alpha$-tree yields again an $\alpha$-tree.) We define the corolla $T_{\alpha}$ to be the unique $\alpha$-tree with no internal edge. Then, every $\alpha$-tree $T$ is an edge expansion of the corolla $T_{\alpha}$.

In the next section, we will define a finite cell complex $Z_{\alpha}$ which is homeomorphic to a closed disk, such that the cells of $Z_{\alpha}$ are indexed by $\mathcal T_{\alpha}$. In fact, we will give an explicit geometric realization of $Z_\alpha$ as a cellular subdivision of a product $K_{n_\alpha-1}\times \Delta^{k_\alpha-1}$ of an associahedron $K_{n_\alpha-1}$ and a simplex $\Delta^{k_\alpha-1}$. Since the associahedron $K_{n_\alpha-1}$ is of dimension $n_\alpha-3$, we see that $Z_\alpha$ is a cell complex of dimension $(n_\alpha-3)+(k_\alpha-1)=n_\alpha+k_\alpha-4=\ell_\alpha+2k_\alpha-4$.

We now present a way to rewrite an $\alpha$-tree $T$ in an equivalent combinatorial way, given by a Stasheff-type tree $S_T$, and another tree, called its essential spine $E_T$.
\begin{defn} Consider an $\alpha$-tree $T$. The {\em underlying Stasheff tree $S_T$ of $T$} is the tree of Stasheff-type, obtained by removing all orientations on the edges, removing all bivalent vertices (which necessarily have two edges that are outgoing from this vertex)
and replacing these two edges with a single edge. 
The choice of exterior vertex of $S_T$ (the ``\emph{root}'' of $S_T$) will be the same as for $T$, and all other external vertices are incoming vertices with a ``flow'' to the unique exterior vertex; see e.g. Figure \ref{FIG:treeT-S(T)}.
\begin{figure}[h]
\[
 \includegraphics[scale=0.8]{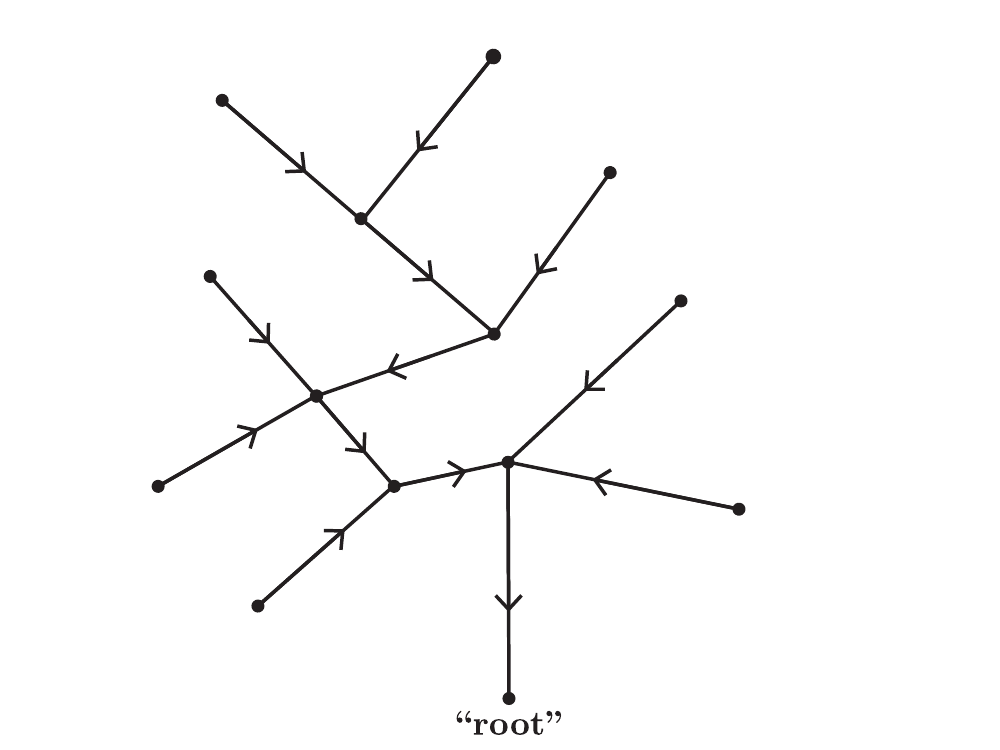}
 \]
\caption{The underlying Stasheff tree $S_T$ for the $\alpha$-tree $T$ from Figure \ref{FIG:alpha-tree}}\label{FIG:treeT-S(T)}
\end{figure}

In other words, the underlying Stasheff tree completely disregards the orientation on the edges of the original tree $T$, and only has the information of the underlying undirected tree of $T$ and the choice of external vertex of $T$. To recover the information of these orientations, we define the {\em spine} $P_T$ of $T$ as follows. Any two external outgoing edges of $T$ can be connected by a unique shortest path in $T$. The union of vertices and edges of these paths over all pairs of outgoing edges (including their orientations in $T$) will be denoted by $P_T$; see e.g. Figure \ref{FIG:treeT-P(T)}. 
There is a canonical choice of one of the external vertices of the spine $P_T$, namely the vertex with the first outgoing label in the sequence $\alpha$.
\begin{figure}[h]
\[
 \includegraphics[scale=0.8]{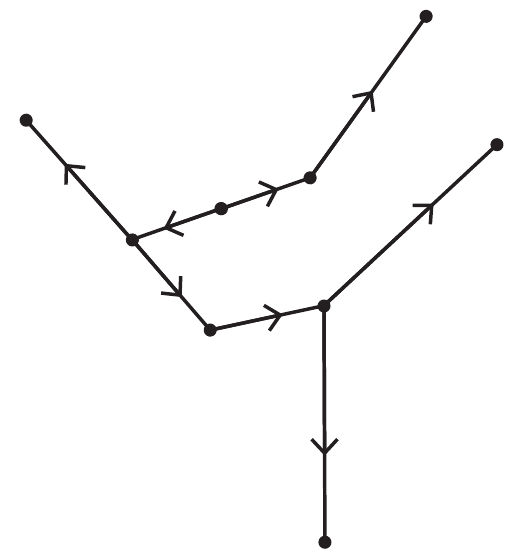}
\]
\caption{
The spine $P_T$ for the $\alpha$-tree $T$ from Figure \ref{FIG:alpha-tree} 
}\label{FIG:treeT-P(T)}
\end{figure}
Clearly, placing the spine $P_T$ on the Stasheff tree $S_T$ and directing all edges that are not on the spine toward $P_T$ will recover the tree $T$ from $S_T$ and $P_T$.

Since $S_T$ has all bivalent vertices removed, it will be useful to do the same for $P_T$. In fact, we define the {\em essential spine $E_T$ of $T$} to be the tree obtained from $P_T$ by removing those vertices which were bivalent in $T$ (and which necessarily had two outgoing edges in $T$), and 
replacing these two edges with a single edge. We then give
 each edge of $E_T$ one of three kinds of labels. If an edge of $E_T$ is an original edge of $T$, then we label it with the orientation provided by $T$. If an edge of $E_T$ is obtained by combining two edges at a bivalent vertex of $T$, then the new edge will be labeled by a new symbol ``$\leftrightarrow$''. Thus, each edge of $E_T$ is labeled either with one of the two orientations of the edge, or with the symbol ``$\leftrightarrow$''. An example is shown in Figure \ref{FIG:treeT-E(T)}.
\begin{figure}[h]
\[ 
\includegraphics[scale=0.8]{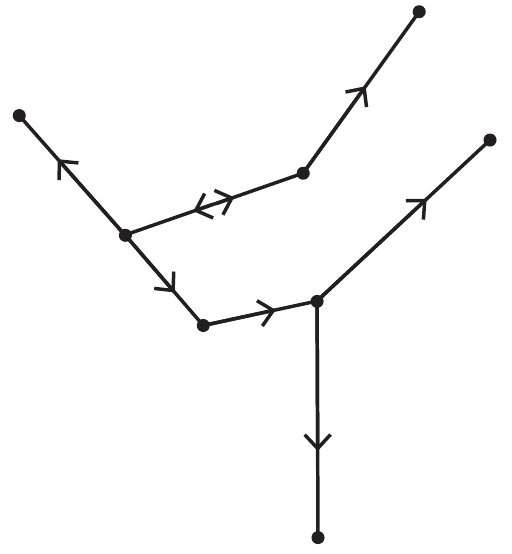}
\]
\caption{
The essential spine $E_T$ for the $\alpha$-tree $T$ from Figure \ref{FIG:alpha-tree}}\label{FIG:treeT-E(T)}
\end{figure}

More generally, we define an essential spine as follows.
\begin{defn}\label{DEF:essential-spine}
An {\em essential spine} is a planar tree $E$, such that each edge is labeled with one of three possible labels: one of the two possible orientations on its edges or  the symbol ``$\leftrightarrow$.'' We require the following conditions for an essential spine:
\begin{enumerate}
\item\label{ITEM:choice-alpha} there is a choice of one of the exterior vertices,
\item\label{ITEM:label-edges} each external edge of $E$ is labeled either with its outgoing direction or with the symbol ``$\leftrightarrow$,'' and
\item\label{ITEM:internal-one-outgoing} each internal vertex of $E$ must have at least one outgoing edge.
\end{enumerate}
\end{defn}
Note, that the essential spine $E_T$ of an $\alpha$-tree $T$ is indeed an essential spine. Note also, that there is no analogue of item \eqref{no-one-one-vertex} in Definition \ref{DEF:directed-trees}, since the essential spine $E_T$ may have bivalent vertices with one incoming and one outgoing edge, see e.g. Figure \ref{FIG:treeT-E(T)}.

We call a Stasheff-type tree $S$ and an essential spine $E$ {\em compatible}, or more precisely, {\em $(j_1,\dots,j_k)$-compatible}, if the underlying tree of $E$ (ignoring the labeling of the edges) is precisely the subtree of $S$ obtained by connecting the external edges of $S$ at positions $j_1, \dots, j_k$ via their shortest paths. This means, in particular, that $E$ must contain precisely all edges and vertices from $S$ obtained by connecting these external edges at positions $j_1, \dots, j_k$. Again, note that for an $\alpha$-tree $T$ whose outgoing labels are at positions $(j_1,\dots,j_{k_\alpha})$, the underlying Stasheff tree $S_T$ and the essential spine $E_T$ are indeed $(j_1,\dots,j_{k_\alpha})$-compatible. For example, the Stasheff-type tree $S_T$ from Figure \ref{FIG:treeT-S(T)} and the essential spine $E_T$ from Figure \ref{FIG:treeT-E(T)} are $(1,4,7,8)$-compatible.

For $\alpha$ with outgoing labels at positions $(j_1,\dots,j_{k_\alpha})$, we denote by 
\begin{eqnarray*}
\mathcal {SE}_{\alpha}&=& \{(S,E): \text{$S$ is a Stasheff tree with $n_\alpha$ external vertices, and $E$ is}\\
&&\hspace{.6in} \text{an essential spine which is $(j_1,\dots,j_{k_\alpha})$-compatible with $S$}\}.
 \end{eqnarray*}
Then, the above construction of $S_T$ and $E_T$ provides a map $f:\mathcal T_{\alpha}\to \mathcal {SE}_{\alpha},  f(T)=(S_T,E_T)$.
\end{defn}
We now have the following lemma.
\begin{lem}\label{LEM:T-SE-bijection}
The map $f:\mathcal T_{\alpha}\to \mathcal {SE}_{\alpha}, f(T)=(S_T,E_T)$ is a bijection, where the inverse $f^{-1}(S,E)$ is given by combining $S$ and $E$ by placing $E$ on $S$ with its induced labels (either use orientation from $E$, or introduce a new bivalent vertex for ``$\leftrightarrow$''; edges in $S$ that are not in $E$ are all labeled as incoming edges).
\end{lem}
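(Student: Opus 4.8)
The plan is to produce an explicit two-sided inverse $g$ to $f$ — precisely the combination map named in the statement — and to verify the three things that make it an inverse: that $g$ takes values in $\mathcal{T}_\alpha$, that $g\circ f=\mathrm{id}_{\mathcal{T}_\alpha}$, and that $f\circ g=\mathrm{id}_{\mathcal{SE}_\alpha}$. Given a compatible pair $(S,E)$, I would form $g(S,E)$ by identifying the underlying tree of $E$ with the spanning subtree of $S$ on the external edges at the outgoing positions $j_1,\dots,j_{k_\alpha}$ (these two trees agree by the definition of compatibility), orienting each such edge by its label in $E$ — subdividing every ``$\leftrightarrow$''-edge by a new vertex whose two incident edges both point away from it — and orienting every remaining edge of $S$ toward this subtree, exactly as in the reconstruction recipe stated just before the lemma. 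The chosen external vertex of $g(S,E)$ is taken to be the root of $S$.

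Next I would check that $g(S,E)\in\mathcal{T}_\alpha$ by verifying the conditions of Definition \ref{DEF:directed-trees}. Condition \eqref{choice-vertex} holds by construction. For \eqref{exterior-cyclic-alpha-order}, a leaf of $S$ at an outgoing position is a leaf of $E$, and its external edge (labeled outgoing or ``$\leftrightarrow$'') reconstructs to an edge pointing toward that leaf, hence an outgoing vertex; any other leaf lies off the spine, so its edge is oriented toward the spine and is therefore incoming — this reproduces $\alpha$ exactly. For \eqref{int-vertex}, each off-spine interior vertex has valence $\ge 3$ in $S$ and acquires its single edge toward the spine as its unique outgoing edge; each spine vertex is an internal vertex of $E$ and so carries an outgoing spine-edge by condition \eqref{ITEM:internal-one-outgoing}; and each newly created ``$\leftrightarrow$''-vertex has two outgoing edges. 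Finally, since the Stasheff-type tree $S$ has no bivalent vertices (by the Remark following Definition \ref{DEF:directed-trees}), the only bivalent vertices of $g(S,E)$ are the ``$\leftrightarrow$''-vertices, all of two-outgoing type, which gives \eqref{no-one-one-vertex}.

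For $g\circ f=\mathrm{id}$ I would invoke the observation already recorded in the text: placing the spine $P_T$ on $S_T$ and orienting every off-spine edge toward $P_T$ returns $T$, while reinserting a bivalent vertex at each ``$\leftrightarrow$''-edge recovers $P_T$ from $E_T$; hence $g(S_T,E_T)=T$. For $f\circ g=\mathrm{id}$, writing $T=g(S,E)$, I would note that the bivalent vertices of $T$ are precisely the introduced ``$\leftrightarrow$''-vertices, so deleting them and forgetting orientations returns $S_T=S$; and the shortest paths joining the outgoing leaves of $T$ trace out exactly the subdivided underlying tree of $E$, so that removing bivalent vertices and reading off the labels yields $E_T=E$.

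The step I expect to be the main obstacle is the well-definedness in the second paragraph, specifically the bookkeeping around the ``$\leftrightarrow$''-label: one must check that every ``$\leftrightarrow$''-edge — external ones included — reconstructs to a genuine bivalent vertex with two outgoing edges, and, conversely, that condition \eqref{ITEM:internal-one-outgoing} of Definition \ref{DEF:essential-spine} is exactly strong enough to force \eqref{int-vertex} at the spine vertices without ever creating a forbidden bivalent vertex (one incoming and one outgoing edge). Once this correspondence between ``$\leftrightarrow$''-edges and two-outgoing bivalent vertices is nailed down, the two composition checks reduce to routine tree surgery.
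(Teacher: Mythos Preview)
Your proposal is correct and follows essentially the same approach as the paper's proof: construct the explicit inverse $g=f^{-1}$ as described in the statement, verify it lands in $\mathcal{T}_\alpha$ (the paper singles out condition~\eqref{int-vertex} as the only nontrivial check, while you spell out all four), and then observe that the two compositions are the identity. Your write-up is simply a more detailed version of the paper's terse argument, with no substantive difference in strategy.
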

\begin{proof}
First, note that for $(S,E)\in \mathcal{SE}_\alpha$, $(j_1,\dots,j_{k_\alpha})$-compatibility implies that $E$ can be placed on $S$ when ignoring the labels. Now, $f^{-1}(S,E)$ is obtained from $S$ by placing the labels from $E$ on the edges of $S$, orienting all edges not in $E$ toward $E$, and changing any edge with label $\leftrightarrow$ to two edges with outgoing orientations from the new (bivalent) vertex. Note, that $f^{-1}(S,E)$ has bivalent vertices only when there was an edge in $E$ labeled with $\leftrightarrow$, since $S$ had no bivalent vertices to begin with. To check that $f^{-1}(S,E)$ is an $\alpha$-tree, it remains to check  condition \eqref{int-vertex} from Definition \ref{DEF:directed-trees}, namely that every interior vertex has at least one outgoing edge. This follows from the condition of $E$ that each vertex must have at least one outgoing edge. This shows that $f^{-1}(S,E)$ is a well-defined $\alpha$-tree of type $(n,k)$. It is now immediate to check that $f\circ f^{-1}=id$ and $f^{-1}\circ f=id$.
\end{proof}

We now transfer the notion of ``edge expansion'' to the space $\mathcal{SE}_\alpha$ of Stasheff trees $S$ with compatible essential spines $E$.

\begin{defn}\label{DEF:formal-expansion-and-dim}
Let $(S,E)$ and $(S',E')$ be elements of $\mathcal {SE}_{\alpha}$. We call $(S',E')$ a {\em formal edge expansion of $(S,E)$ by one edge} if either:
\begin{enumerate}
\item\label{expansion-S'=S}
 $S'=S$, and $E'$ is obtained from $E$ by changing one label of an edge from a direction to the symbol $\leftrightarrow$,
\item\label{expansion-E'=E}
 $S'$ is a one edge expansion of $S$, and the new edge of $S'$ does not appear in the essential spine, and $E'=E$, or
\item\label{expansion-new-leftright}
 $S'$ is a one edge expansion of $S$, and the new edge of $S'$ does appear in the essential spine, and the new edge in $E'$ is labeled by a direction (but not $\leftrightarrow$).
\end{enumerate}

More generally, $(S',E')$ a {\em formal edge expansion of $(S,E)$} is there is a sequence of formal edges expansions by one edge $(S',E')\rightsquigarrow(S^{(1)},E^{(1)})\rightsquigarrow\dots\rightsquigarrow(S^{(p)},E^{(p)})\rightsquigarrow(S,E)$.
\end{defn}

\begin{lem} 
Let $T,T'\in \mathcal T_\alpha$. Then $T'$ is an edge expansion of $T$ if and only if $f(T')$ is a formal edge expansion of $f(T)$. 
\end{lem}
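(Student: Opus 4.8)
The plan is to reduce the statement to the level of a single edge and then match the three types of formal one-edge expansion in Definition~\ref{DEF:formal-expansion-and-dim} against the three ways an interior edge of an $\alpha$-tree can sit relative to its spine. An edge expansion with $T=T'/(e_1,\dots,e_k)$ can be performed one edge at a time, and by the remark that collapsing any interior edge of an $\alpha$-tree again yields an $\alpha$-tree, each intermediate contraction stays inside $\mathcal T_\alpha$; likewise a formal edge expansion is by definition a chain of formal one-edge expansions. Since $f$ is a bijection by Lemma~\ref{LEM:T-SE-bijection}, it therefore suffices to prove the \emph{covering} version: $T'$ is a one-edge expansion of $T$ if and only if $f(T')$ is a formal one-edge expansion of $f(T)$. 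The general equivalence then follows by applying $f$ (resp.\ $f^{-1}$) term by term along such a chain and concatenating.

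For the covering version I would first record two structural facts that control how contraction interacts with the passage $T\mapsto(S_T,E_T)$. First, every bivalent vertex lies on the spine $P_T$, since its two outgoing edges each flow to an outgoing external vertex and so it lies on the shortest path between them; consequently an interior edge has both endpoints on the spine exactly when the edge itself lies on the spine. Second, no two bivalent vertices are adjacent, since an edge joining two sources would have to be oriented outward from each of them at once. With these in hand I would analyze the contraction of an interior edge $e$ of $T'$ in three mutually exclusive and exhaustive cases: (I) $e$ is off the spine; (II.a) $e$ lies on the spine and exactly one endpoint is bivalent; (II.b) $e$ lies on the spine and neither endpoint is bivalent. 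These correspond to formal types \eqref{expansion-E'=E}, \eqref{expansion-S'=S}, and \eqref{expansion-new-leftright}, respectively. In case~(I) the contraction leaves the spine, hence $E_{T'}=E_T$, while collapsing an off-spine edge of the Stasheff tree, giving type~\eqref{expansion-E'=E}. In case~(II.a) contracting the edge at the bivalent source absorbs that source, so $S_{T'}=S_T$ and the corresponding $\leftrightarrow$-edge of $E_{T'}$ becomes a direction-labeled edge of $E_T$, giving type~\eqref{expansion-S'=S}. In case~(II.b) the edge is a genuine direction-labeled edge of $E_{T'}$ whose collapse both removes it from the essential spine and collapses the matching edge of the Stasheff tree, giving type~\eqref{expansion-new-leftright}. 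The converse is obtained by running $f^{-1}$ on each formal type and observing that the prescribed move is realized by contracting a single interior edge, which is exactly the reverse of these three computations.

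The main obstacle will be the bookkeeping of bivalent vertices, since these are invisible in $S_T$ but recorded by the $\leftrightarrow$-labels in $E_T$, so a single contraction can affect $S_T$ and $E_T$ in subtly different ways. The crucial checks are that a spine contraction never produces a new bivalent source outside case~(II.a)---which follows from the degree count $\deg(v)=\deg(v_1)+\deg(v_2)-2\ge 3$ for two non-bivalent interior endpoints---and, in case~(II.a), that the vertex absorbing the source does not itself become bivalent, which uses condition~\eqref{no-one-one-vertex} to force its degree to be at least three. Once these degree bounds are verified, matching the orientations under $f^{-1}$ (using that a $\leftrightarrow$-edge introduces a source with two outgoing edges, exactly reversing the absorption in~(II.a)) is routine, and the equivalence assembles from the reduction in the first paragraph.
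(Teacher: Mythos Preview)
Your proof is correct and follows the same approach as the paper's: reduce to a single edge and split into cases according to whether the contracted edge lies off the spine, on the spine at a bivalent source, or on the spine away from bivalent vertices, matching these to the three formal expansion types. Your version is more thorough than the paper's rather terse argument---in particular you make the converse direction and the bivalent-vertex bookkeeping explicit, whereas the paper only sketches the forward direction and leaves the rest implicit.
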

\begin{proof}
Assume that $T=T'/e$ is obtained from $T'$ by collapsing only one edge $e$. Let $f(T)=(S,E)$ and $f(T')=(S',E')$. If the collapsed edge $e$ does not appear in the spine of $T'$, then $E'=E$ and $S'$ must be an edge expansion of $S$, i.e. \eqref{expansion-E'=E} of Definition \ref{DEF:formal-expansion-and-dim}. If the new edge does appear in $E'$, then either it created a bivalent vertex with two outgoing edges (i.e. \eqref{expansion-S'=S} of Definition \ref{DEF:formal-expansion-and-dim}) or it did not (i.e. \eqref{expansion-new-leftright} of Definition \ref{DEF:formal-expansion-and-dim}), but in either case it is a formal edge expansion. Iterating this for multiple edges gives the result.
\end{proof}

We call an $\alpha$-tree $T\in\mathcal T_\alpha$ maximally expanded, if there are no edge expansions of $T$. A similar definition applies to $(S,E)\in \mathcal{SE}_\alpha$.
\begin{lem}
$(S,E)\in \mathcal{SE}_\alpha$ is maximally expanded iff all internal vertices of $S$ are trivalent and each interior vertex of $E$ has exactly one outgoing edge.
\end{lem}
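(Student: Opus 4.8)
The plan is to reduce the statement to ruling out, one at a time, the three kinds of single-edge formal edge expansions listed in Definition \ref{DEF:formal-expansion-and-dim}. First I would observe that $(S,E)$ is maximally expanded if and only if it admits no formal edge expansion \emph{by one edge}: any formal edge expansion $(S',E')\rightsquigarrow\cdots\rightsquigarrow(S,E)$ has a final arrow which is a one-edge expansion producing $(S,E)$, so the existence of some expansion is equivalent to the existence of a one-edge expansion. Thus $(S,E)$ is maximally expanded precisely when none of the cases \eqref{expansion-S'=S}, \eqref{expansion-E'=E}, \eqref{expansion-new-leftright} can be performed.

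Next I would analyze case \eqref{expansion-S'=S} (relabeling a directed edge to $\leftrightarrow$). Recalling that a $\leftrightarrow$-edge represents a bivalent vertex with two \emph{outgoing} edges, such an edge is outgoing from neither of its endpoints; hence changing a directed edge $e$ from $u$ to $w$ into a $\leftrightarrow$-edge decreases the number of outgoing edges at the source $u$ by one and leaves every other vertex unchanged. By condition \eqref{ITEM:internal-one-outgoing} of Definition \ref{DEF:essential-spine}, the result $E'$ is a valid essential spine if and only if $u$ retains at least one outgoing edge, i.e. if and only if $u$ originally had at least two outgoing edges. Since external edges of $E$ point outward, the source of any directed edge is necessarily an interior vertex, so I conclude that case \eqref{expansion-S'=S} is blocked exactly when every interior vertex of $E$ has at most one—hence, using \eqref{ITEM:internal-one-outgoing} again, exactly one—outgoing edge.

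Then I would treat cases \eqref{expansion-E'=E} and \eqref{expansion-new-leftright} together, since both require $S'$ to be a one-edge expansion of $S$. A planar tree $S$ admits a one-edge expansion if and only if some interior vertex $v$ has valence at least $4$, because splitting $v$ into two interior vertices, each of valence $\geq 3$, requires $\geq 4$ incident edges. Hence if all interior vertices of $S$ are trivalent, both cases are impossible. For the converse I must show that whenever $S$ has an interior vertex $v$ of valence $\geq 4$, at least one of the two expansions is actually realizable inside $\mathcal{SE}_\alpha$; this is the step I expect to be the main obstacle. Choosing any valid planar split of $v$, with new edge $\tilde e$, I would distinguish whether $\tilde e$ lies on the essential spine (equivalently, whether the spine edges at $v$ are separated by the split): if not, keeping $E'=E$ gives a valid case \eqref{expansion-E'=E} expansion, since the spine subtree is unchanged and $E$ was already a valid spine; if so, I must produce a valid direction label for $\tilde e$, giving case \eqref{expansion-new-leftright}. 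The key observation making the latter possible is that an interior spine vertex always carries at least one outgoing spine edge by \eqref{ITEM:internal-one-outgoing}; labeling as $v_1$ the side of the split containing such an outgoing edge and orienting $\tilde e$ from $v_2$ to $v_1$ leaves $v_1$ with its outgoing spine edge and endows $v_2$ with the outgoing edge $\tilde e$, so $E'$ again satisfies \eqref{ITEM:internal-one-outgoing} and is a valid essential spine ($\tilde e$ is interior, so the external-edge condition \eqref{ITEM:label-edges} is untouched).

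Finally I would assemble the three analyses: $(S,E)$ is maximally expanded iff all three cases are blocked, iff all interior vertices of $S$ are trivalent and every interior vertex of $E$ has exactly one outgoing edge, which is the claim. The only delicate point, as noted, is verifying in case \eqref{expansion-new-leftright} that a compatible, correctly oriented new spine edge can always be inserted; the remainder is bookkeeping with valence and outgoing-edge counts.
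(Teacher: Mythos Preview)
Your proposal is correct and follows essentially the same approach as the paper: reduce to single-edge formal expansions and analyze the three cases of Definition~\ref{DEF:formal-expansion-and-dim}. You are in fact more careful than the paper on one point---the paper simply asserts that ``any non-trivalent internal vertex can be further expanded,'' while you verify that such an expansion can always be carried out \emph{within} $\mathcal{SE}_\alpha$ by producing, when the new edge lies on the spine, a valid orientation for it using condition~\eqref{ITEM:internal-one-outgoing}.
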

\begin{proof}
The condition on $S$ is necessary since any non-trivalent internal vertex can be further expanded. So, assume now that $S$ has only trivalent internal vertices. Now, $E$ is an essential spine (Definition \ref{DEF:essential-spine}), so that all edges of $E$ are labeled with a direction or $\leftrightarrow$. The only possible edge expansion of $(S,E)$ occurs by changing a direction of $E$ to a symbol $\leftrightarrow$ (Definition \ref{DEF:formal-expansion-and-dim} \eqref{expansion-S'=S}). Now, if any of the interior vertices of $E$ has more than one outgoing edge, then one of these edges can be changed to a label $\leftrightarrow$, giving an edge expansion $(S,E')$ of $(S,E)$. Thus, $(S,E)$ is maximally expanded exactly when each interior vertex of $E$ has precisely one outgoing edge.
\end{proof}

\section{Geometric Realization of the Set of Directed Planar Trees}\label{SEC:ConstructingZalpha}

In this section, we define the assocoipahedron $Z_\alpha$, a cell complex
 whose cells are labeled by 
the set of $\alpha$-trees in $\mathcal {T}_\alpha$. We give an explicit geometric realization of this cell complex as a subdivision of a product $K_{n-1}\times \Delta^{k-1}$ of an associahedron and a simplex, which uses and extends the secondary polytope construction; see \cite{GKZ}. Our main Theorem \ref{THM:Zalpha-cell-complex} states that this construction gives a well-defined cell complex.

To construct the polytope $Z_\alpha$ we first recall how one can construct the associahedron from a Stasheff-type tree $S$. There are various (non-equivalent) ways to construct the associahedron; we refer the reader to \cite{CSZ} for an interesting comparison among these constructions. In this paper, we will mainly use the secondary polytope construction (Definition \ref{DEF:secondary-polytope}) defined by Gelfand, Kapranov and Zelevinsky in \cite{GKZ} to parametrize the associahedron; however other constructions would work as well for our construction; see Remark \ref{REM:Loday-construction-Kalpha} below.

Recall that we have fixed $\alpha$, which is a sequence of $n_\alpha$ many incoming and outgoing labels, for which we will assume that $n_\alpha\geq 3$. For ease of notation, we will simply write $n=n_\alpha$ in the next definition. Now, additionally fix a convex $n$-gon $Q\subseteq \R^2$, given as the convex hull $Q:= conv(q_1,\dots, q_{n})$ of vertices $q_1,\dots q_{n}\in \R^2$ such that no three of these vertices are collinear. We assume $q_1,\dots, q_{n}$ appear in this cyclic order in the boundary of $Q$, and we choose the line segment $\overline{q_1q_{n}}$ as the base side of $Q$.
\begin{defn}\label{DEF:secondary-polytope}
Each Stasheff-type tree $S$ can be uniquely represented as a subdivision of $Q$ by non-intersecting diagonals; see Figure \ref{FIG:S-from-Q}. 
\begin{figure}[h]
\[
 \includegraphics[scale=0.8]{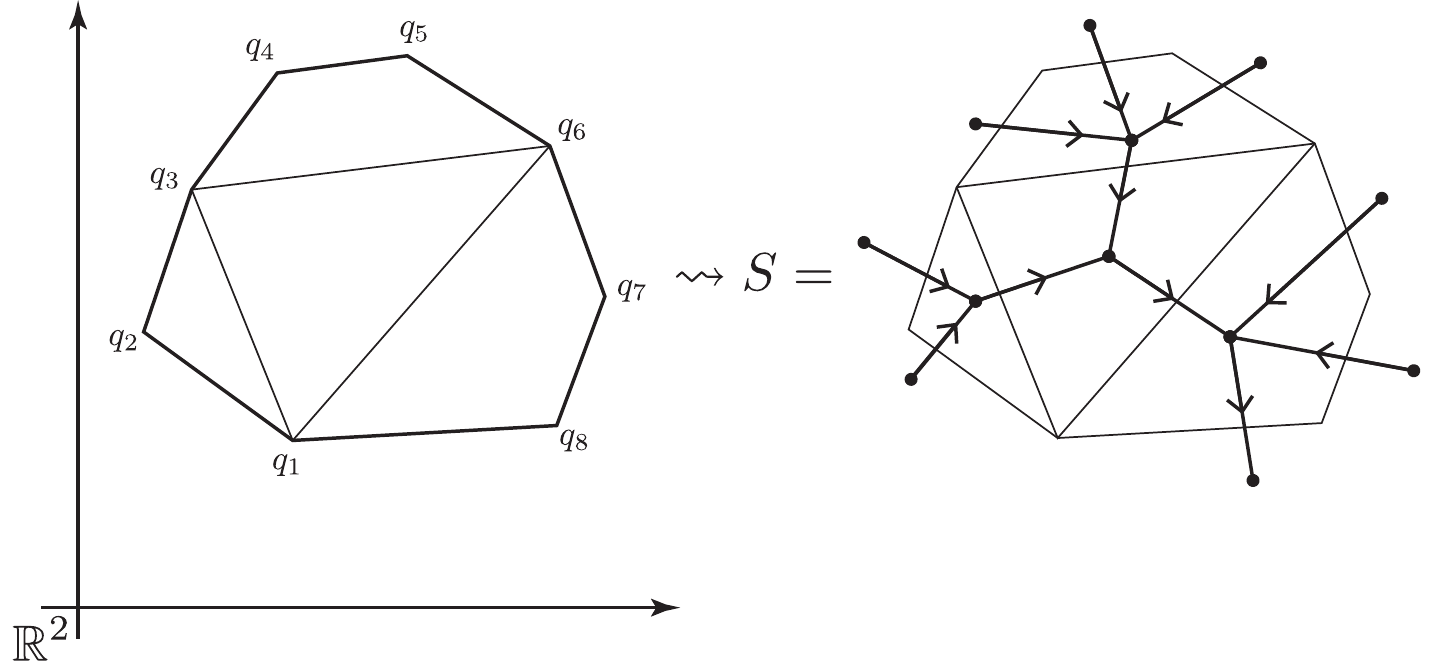}
\]
\caption{The Stasheff-type tree $S$ as the transversal tree from a subdivision of $Q$ by non-intersecting diagonals}\label{FIG:S-from-Q}
\end{figure}
Note, that the maximally expanded Stasheff-type trees (whose internal vertices are all trivalent) correspond exactly to triangulations of $Q$; i.e. a subdivision into $n-2$ many triangles whose vertices are all coming from $q_1,\dots q_{n}$. 

Now, let $S$ be a maximally expanded Stasheff-type tree with corresponding triangulation $t=t(S)$ of $Q$. For each vertex $q_j$ (where $j=1,\dots, n$), let $Star_t(j)$ be the union of triangles in $t$ that have $q_j$ as a vertex, and denote by $area(Star_t(j))$ its area. Then, we define the vector $v_t\in\R^{n}$ by setting
\[
v_t:=\sum_{j=1}^{n} area(Star_t(j))\cdot e_j
\]
where $\{e_j\}_{j=1,\dots,n}$ is the standard basis of $\R^{n}$. With this, the secondary polytope $K_Q\subseteq\R^{n}$ is defined to be the convex hull of the vectors $v_t$ ranging over all triangulations of $Q$:
\[
K_{Q}:= conv(\{v_t: t\text{  is a triangulation of }Q\}).
\]
It is well-known, that $K_Q\subseteq \R^{n}$ is an $n-3$ dimensional convex polytope, which is a geometric representation of the associahedron $K_{n-1}$; see e.g. \cite[Section 7.3.B, p. 237ff]{GKZ}.
\end{defn}

Using the above construction of the secondary polytope, if $T\in \mathcal T_\alpha$ is a maximally expanded $\alpha$-tree, we next define a vector $w_T\in \R^{k_\alpha}$. The convex hull of all these vectors $w_T$ will be denoted by $\Delta_Q=conv(\{w_T\}_T)\cong \Delta^{k_\alpha-1}\subseteq \R^{k_\alpha}$, and our polytope $Z_\alpha$ will be given as $Z_\alpha:=K_Q\times \Delta_Q\cong K_{n_\alpha-1}\times \Delta^{k_\alpha-1}\subseteq  \R^{n_\alpha}\times \R^{k_\alpha}$.
\begin{defn}\label{DEF:w_T}
Let $T\in \mathcal T_\alpha$ be maximally expanded and write $f(T)=(S_T,E_T)$ for the corresponding Stasheff-type tree $S_T$ and essential spine $E_T$; see Section \ref{SEC:PlanarTrees}. Since $S_T$ is maximally expanded, there is an associated triangulation $t$ of the $n_\alpha$-gon $Q$ associated with $S_T$ as described in Definition \ref{DEF:secondary-polytope} and thus there is a vector $v_T:=v_t\in \R^{n_\alpha}$.

Now, using the essential spine $E_T$ we define a vector $w_T\in \R^{k_\alpha}$. Since $T$ is maximally expanded, each internal vertex of $E_T$ has exactly one outgoing edge. If we cut the tree $E_T$ at all edges labeled by $\leftrightarrow$, then we obtain subtrees, each of which has a flow from incoming edges to one of the external outgoing edges of $E_T$ (since each of the internal vertices of $E_T$ must have an outgoing edge); see Figure \ref{FIG:Cut-ET}.
\begin{figure}[h]
\[
 \includegraphics[scale=0.8]{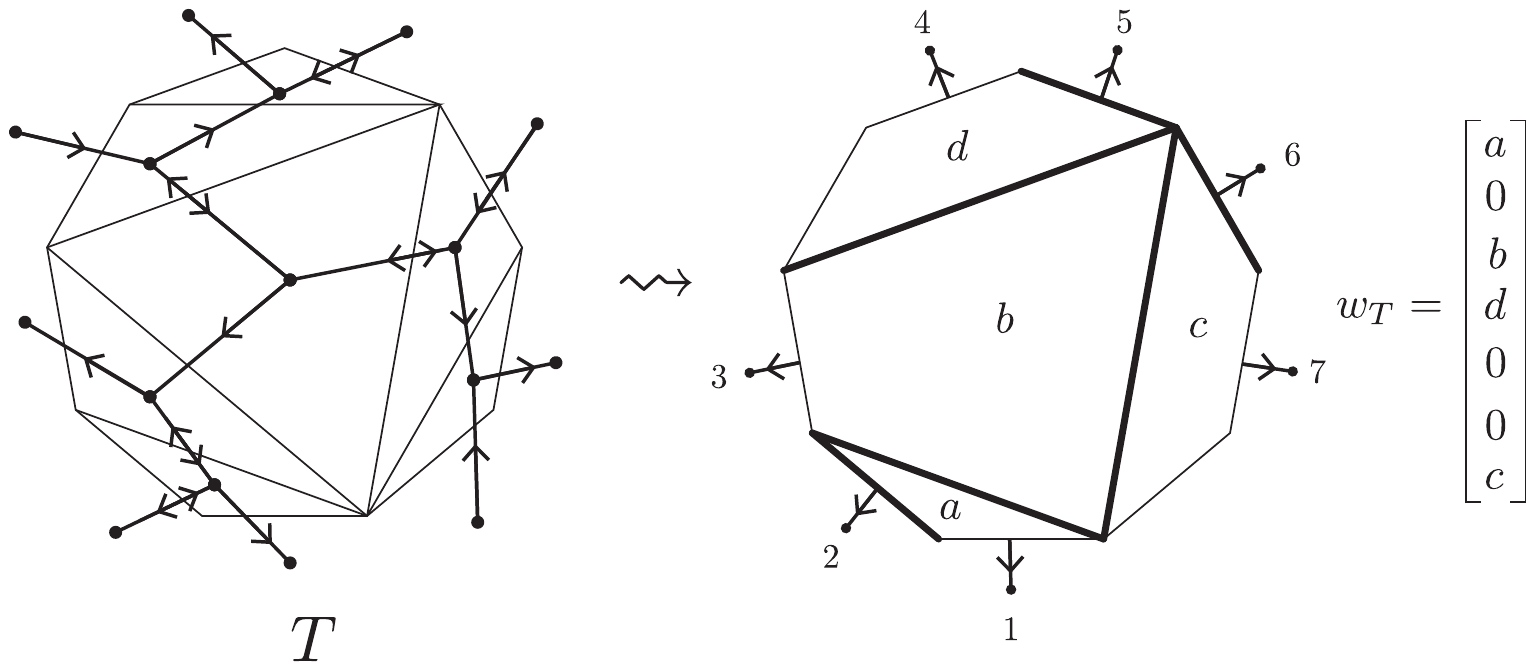}
\]
\caption{Decomposition of $T$ by cutting edges with $\leftrightarrow$ labels. Here, $a, b, c, d$ are the areas of the displayed regions, and $w_T$ is the corresponding vector in $\R^7$}\label{FIG:Cut-ET}
\end{figure}

Thus, the essential spine gives a decomposition of $T$ into subtrees such that each subtree has exactly one of the $k_\alpha$ outgoing vertices of $T$. Note, that each edge in $E_T$ also corresponds via the triangulation given by $S_T$ to either a diagonal in $Q$ or to one of the boundary line segments of $Q$; see Figure \ref{FIG:Cut-ET}. Thus, cutting the $n_\alpha$-gon $Q$ along those diagonals yields $k_\alpha$ many subpolygons, each of which corresponds to exactly one of the outgoing vertices of $T$. Here we need include zero-area segments as degenerate subpolygons; see e.g. the outgoing vertices $2$, $5$, or $6$ in Figure \ref{FIG:Cut-ET}. For $i=1,\dots, k_\alpha$, denote by $Q_T(i)$ the ``subpolygon'' associated to the $i$th outgoing vertex of $T$, and let $area(Q_T(i))\geq 0$ be its area. Then, define the vector $w_T\in \R^{k_\alpha}$ by setting
\begin{equation}\label{EQN:wT}
w_T:=\sum_{i=1}^{k_\alpha} area(Q_T(i))\cdot e_i
\end{equation}
where $\{e_i\}_{i=1,\dots,k_\alpha}$ is the standard basis of $\R^{k_\alpha}$. Then, we define $\Delta_{Q}\subseteq \R^{k_\alpha}$ as the convex hull of the vectors $w_T$ ranging over all maximally expanded $\alpha$-trees $T$:
\[
\Delta_Q:=conv(\{w_T: T\text{ is a maximally expanded $\alpha$-tree}\}).
\]

Finally, for this choice of $Q$, we define our space to be
\[
Z_\alpha:=Z_{Q,\alpha}:=K_Q\times \Delta_Q\quad\subseteq \R^{n_\alpha}\times\R^{k_\alpha}.
\]
In Theorem~\ref{THM:Zalpha-cell-complex}, we prove that $Z_\alpha$ is homeomorphic to a ball and has a cell structure reflecting the set of $\alpha$-trees.
This definition of $Z_\alpha$ works when $n_\alpha\geq 3$. Note, that for $\alpha=(\oo\oo)$, there is exactly one $(\oo\oo)$-tree, which is already maximally expanded. We thus define $Z_{(\oo\oo)}:=\{*\}$ to be a one-point set.
\end{defn}
We can easily determine $\Delta_Q$ as follows. 
\begin{lem}\label{LEM:DeltaQ-easy}
\begin{equation}\label{EQN:DeltaQ}
\Delta_Q=\left\{\sum_{i=1}^{k_\alpha} x_ie_i\in \R^{k_\alpha}: x_1+\dots +x_{k_{\alpha}}=area(Q), \text{  and }x_i\geq 0 \text{ for all }i \right\}
\end{equation}
\end{lem}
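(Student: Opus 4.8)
The plan is to prove the two inclusions of \eqref{EQN:DeltaQ} separately, writing $\Sigma$ for the set on the right-hand side, which is the $(k_\alpha-1)$-simplex with vertices $area(Q)\cdot e_1,\dots,area(Q)\cdot e_{k_\alpha}$.

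For the inclusion $\Delta_Q\subseteq\Sigma$, I would first check that every generator $w_T$ lies in $\Sigma$. By construction (Definition \ref{DEF:w_T}) the subpolygons $Q_T(1),\dots,Q_T(k_\alpha)$ arise by cutting $Q$ along those diagonals of the triangulation associated to $S_T$ which correspond to the $\leftrightarrow$-labeled edges of $E_T$; since these diagonals come from a triangulation they are pairwise non-crossing, so the subpolygons have pairwise disjoint interiors and cover $Q$. Hence $\sum_i area(Q_T(i))=area(Q)$ while each summand is $\geq 0$, so $w_T\in\Sigma$. As $\Sigma$ is convex and $\Delta_Q$ is by definition the convex hull of the $w_T$, this gives $\Delta_Q\subseteq\Sigma$. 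This direction is routine.

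For the reverse inclusion $\Sigma\subseteq\Delta_Q$, since $\Delta_Q$ is convex it suffices to realize each vertex $area(Q)\cdot e_i$ as some $w_{T_i}$. I would construct $T_i$ through its pair $(S,E)\in\mathcal{SE}_\alpha$ (using the bijection of Lemma \ref{LEM:T-SE-bijection}): take $S$ to be any trivalent Stasheff tree, i.e.\ any triangulation of $Q$, and on the compatible essential spine $E$ label the external edge at the $i$th outgoing position with its outgoing direction, label all other external edges of $E$ with $\leftrightarrow$, and orient every internal edge of $E$ toward the $i$th outgoing external edge. Orienting all internal edges toward a fixed leaf of a tree is globally consistent and gives each interior vertex exactly one outgoing edge, namely the one pointing toward vertex $i$; the $\leftrightarrow$-labels contribute no outgoing edge at either endpoint, since a $\leftrightarrow$ edge records a former bivalent source and is therefore incoming at both of its ends (this is exactly the mechanism used in the maximal-expansion criterion, where changing a direction to $\leftrightarrow$ removes an outgoing edge). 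Thus $E$ is a legal essential spine and $(S,E)$ is maximally expanded.

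Finally I would compute $w_{T_i}$. The $\leftrightarrow$-edges here are precisely the $k_\alpha-1$ external edges at the outgoing positions different from $i$, and these correspond to boundary sides of $Q$ rather than interior diagonals; cutting along them removes only the degenerate zero-area segments, so $area(Q_{T_i}(j))=0$ for $j\neq i$ while the remaining subpolygon $Q_{T_i}(i)$ is all of $Q$, giving $w_{T_i}=area(Q)\cdot e_i$. I expect the main obstacle to be this second inclusion: specifically, making precise that external $\leftrightarrow$-labels yield genuinely degenerate (zero-area) pieces while the single directed flow toward vertex $i$ retains all the area of $Q$, and verifying that the resulting labeled tree satisfies the maximal-expansion criterion. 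Once all $k_\alpha$ vertices of $\Sigma$ are realized, convexity yields $\Sigma\subseteq\Delta_Q$, and the two inclusions together establish the claim.
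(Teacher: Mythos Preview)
Your proposal is correct and follows essentially the same approach as the paper: both directions are handled identically, with the forward inclusion coming from additivity of area over the decomposition of $Q$, and the reverse inclusion obtained by realizing each vertex $area(Q)\cdot e_i$ via a maximally expanded tree built from an arbitrary trivalent Stasheff tree whose essential spine has $\leftrightarrow$ on all outgoing external edges except the $i$th and a unique flow toward that edge. Your write-up is somewhat more explicit than the paper's in verifying the essential-spine and maximal-expansion conditions, but the argument is the same.
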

\begin{proof}
To check the inclusion ``$\subseteq$'' note that all vectors $w_T$ from \eqref{EQN:wT} are in the right-hand side of \eqref{EQN:DeltaQ}, since $\bigcup_i Q_T(i)=Q$ with zero area intersections, and thus $\sum_i area(Q_T(i))=area(Q)$. For the other inclusion ``$\supseteq$'' it is enough to check that the vectors $area(Q)e_i$ are in $\Delta_Q$ for all $i=1,\dots, k_\alpha$, since $\Delta_Q$ is convex. To see this, let $S$ be any maximally expanded Stasheff-type tree. Then we can construct an essential spine $E_i$ which is $(j_1,\dots,j_{k_\alpha})$-compatible with $S$ by labeling the exterior edge at the $j_i$th position with an outgoing edge, while the outgoing edges at positions $j_1,\dots, j_{i-1},j_{i+1},\dots, j_{k_\alpha}$ are labeled with $\leftrightarrow$, and thus there is a unique flow to the outgoing edge at the $j_i$th position. Note that for $T_i=f^{-1}(S,E_i)$, the outgoing vertices at positions $j_1,\dots, j_{i-1},j_{i+1},\dots, j_{k_\alpha}$ are cut at their boundary line segments in $Q$, so that $i$th ``subpolygon'' of $Q$ is the whole polygon, $Q_{T_i}(i)=Q$. We thus obtain $w_{T_i}=area(Q)\cdot e_i\in \Delta_Q$, which is what we needed to check.
\end{proof}

\begin{ex}\label{EX:polyhedra} 
In the following examples, we fix some polygon $Q$. Figures~\ref{FIG:Associahedra234},~\ref{FIG:Pairahedra}, and~\ref{FIG:Higher-Poly} depict projections of the subspaces $Z_\alpha$ of high-dimensional Euclidean spaces onto their affine hulls.
\begin{enumerate}
\item\label{EX:polyhedra-associahedra}
When $\alpha=(\oo\ii\ii\dots\ii\ii)$ has exactly one outgoing label, $k_\alpha=1$, we get that $\Delta_Q\cong \{*\}$, so that $Z_\alpha\cong K_{n_\alpha-1}$ is precisely the associahedron; see Figure \ref{FIG:Associahedra234}.
\begin{figure}[h]
\[
 \includegraphics[scale=1]{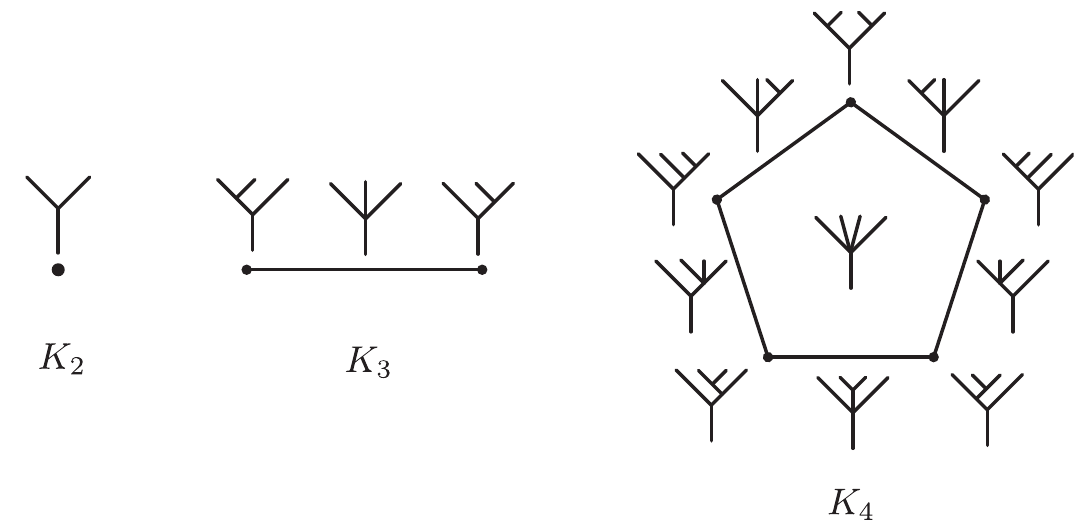}
\]
\caption{The spaces $Z_{(\oo\ii\ii)}\cong K_2$, $Z_{(\oo\ii\ii\ii)}\cong K_3$, and $Z_{(\oo\ii\ii\ii\ii)}\cong K_4$}\label{FIG:Associahedra234}
\end{figure}
\item\label{EX:polyhedra-OO}
When $\alpha=(\oo\ii\ii\dots\ii\ii\oo\ii\ii\dots\ii\ii)$ has exactly two outgoing labels, $k_\alpha=2$, we get that $\Delta_Q$ is an interval. Note that $\alpha$ is determined by exactly two numbers $\ell_1$ and $\ell_2$, which are the number of incoming edges between the two outgoing edges, $\alpha=(\oo\underbrace{\ii\ii\dots\ii\ii}_{\ell_1}\oo\underbrace{\ii\ii\dots\ii\ii}_{\ell_2})$. In this case $Z_\alpha$ is precisely the pairahedron as defined in \cite{T} for the two integers $\ell_1$ and $\ell_2$. In Figure \ref{FIG:Pairahedra} we display $Z_{(\oo\oo\ii)}$ (which is an interval $K_2\times \Delta^1$), $Z_{(\oo\ii\oo\ii)}$ (which is a hexagon that is a subdivision of $K_3\times \Delta^1$), and also $Z_{(\oo\ii\ii\oo\ii)}$ (which is a subdivision of $K_4\times \Delta^1$).
\begin{figure}[h]
\[
\includegraphics{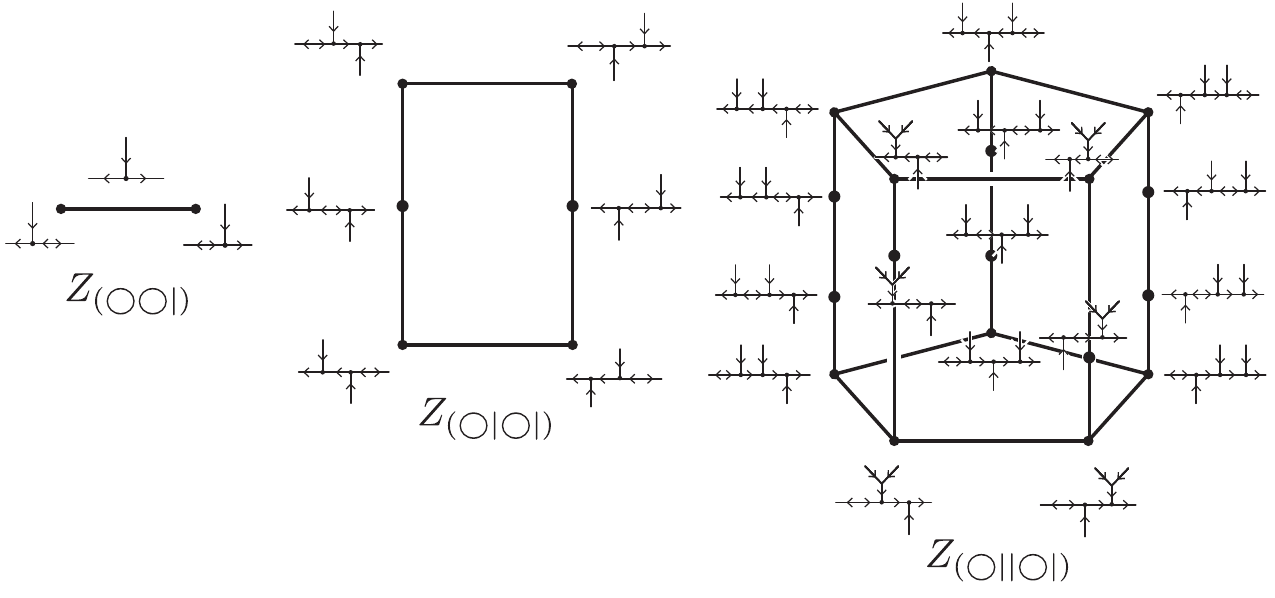} 
\]
\caption{ The spaces $Z_{(\oo\oo\ii)}$, $Z_{(\oo\ii\oo\ii)}$, and $Z_{(\oo\ii\ii\oo\ii)}$.}\label{FIG:Pairahedra}
\end{figure}
\item\label{EX:polyhedra-OOO}
Finally, we also display the spaces $Z_{(\oo\oo\oo)}\cong \Delta^2$ and $Z_{(\oo\oo\oo\ii)}$ (which is a subdivision of $K_3\times \Delta^2$) in Figure \ref{FIG:Higher-Poly}.
\begin{figure}[h]
\[
 \includegraphics[scale=1]{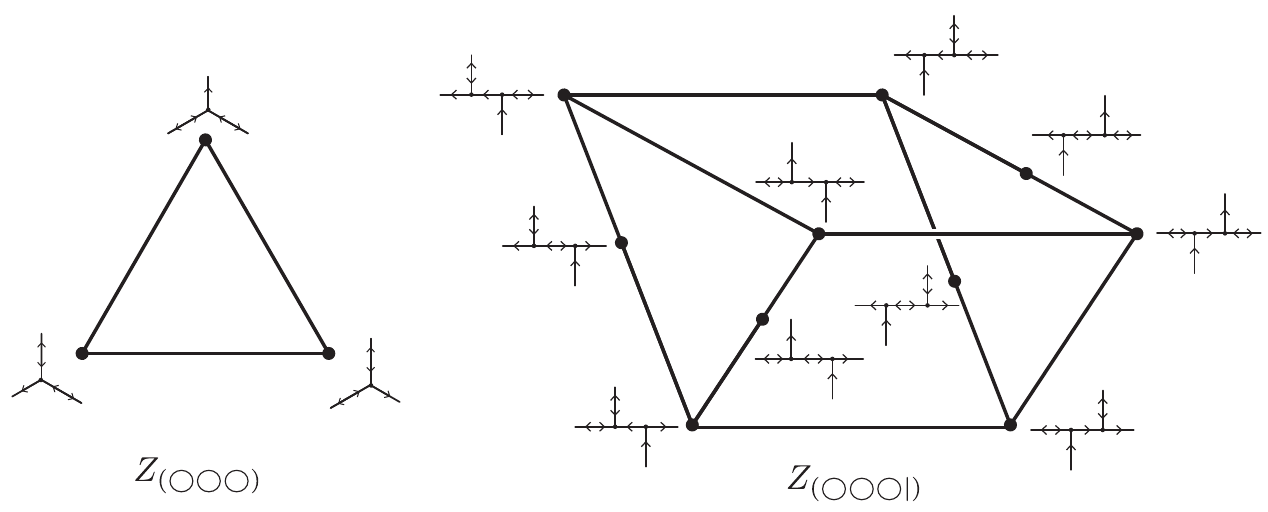}
\]
\caption{The spaces $Z_{(\oo\oo\oo)}\cong \Delta^2$ and $Z_{(\oo\oo\oo\ii)}$}
\label{FIG:Higher-Poly}
\end{figure}
\end{enumerate}
\end{ex}

We claim that $Z_\alpha$ 
may be given the structure of a cell complex so that each cell is labeled by an $\alpha$-tree $T$. We now define the cell $Z_T\subseteq Z_\alpha$ corresponding to the  $\alpha$-tree $T$.
\begin{defn}\label{DEF:KT-DeltaT-ZT}
Let $T\in \mathcal T_\alpha$ be an $\alpha$-tree which is not necessarily maximally expanded. Let $max(T)\subseteq \mathcal T_\alpha$ be set of all edge expansions of $T$ which are maximally expanded $\alpha$-trees.
\begin{eqnarray*}
K_T&:=& conv(\{v_t: \exists T'\in max(T)\text{, and } t\text{  is the triangulation of $Q$} \\
&& \hspace{.67in} \text{corresponding to the Stasheff-type $S_{T'}$}\}),\\
\Delta_T&:=& conv(\{w_{T'}: T' \in max(T)\}), \\
Z_T&:=& conv(\{(v_t,w_{T'}): \exists T'\in max(T)\text{, and } t\text{  is the triangulation of $Q$} \\
&& \hspace{1.05in} \text{corresponding to the Stasheff-type $S_{T'}$}\}).
\end{eqnarray*}
When $T$ is a corolla ($T$ has one internal vertex) then $K_T$ is equal to $K_Q$, $\Delta_T$ is equal to $\Delta_Q$, and $Z_T$ is equal to $Z_\alpha$.
In general,
it is clear that $Z_T$ is contained in $K_T \times \Delta_T$ and 
we will see below (Corollary \ref{COR:ZT=KTxDT}), that $Z_T$ is in fact equal to $K_T\times \Delta_T$.
\end{defn}

It will be convenient for us to work with an auxiliary space $\Lambda_T$ in $\R^{k_\alpha}$ which we define here.
Let $T$ be an $\alpha$-tree, and let $\e$ be any edge in $T$. The edge $\e$ of $T$ corresponds to a diagonal of the $n_\alpha$-gon $Q$ (which may possibly be a line segment of the boundary). This yields two separate polygons $Q'$ and $Q''$ (one of which may be a line segment) with $Q'\cup Q''=Q$ and $Q'\cap Q''=$diagonal corresponding to $\e$.
Furthermore, the set of outgoing edges of $T$ are subdivided into two subsets by $\e$, whose corresponding basis vectors in $\R^{k_\alpha}$ are given by $\{e_{i'_1},\dots,e_{i'_{k'}}\}$ and $\{e_{i''_1},\dots,e_{i''_{k''}}\}$ with $\{e_{i'_1},\dots,e_{i'_{k'}}\}\cup\{e_{i''_1},\dots, e_{i''_{k''}}\}=\{e_{1},\dots,e_{k_\alpha}\}$ and $\{e_{i'_1},\dots,e_{i'_{k'}}\}\cap\{e_{i''_1},\dots,e_{i''_{k''}}\}=\varnothing$.
Define the subspace $\Lambda_T$ of $\R^{k_\alpha}$ as follows:
\begin{multline*}
\Lambda_T:= \Big\{w=\sum_{i=1}^{k_\alpha} x_ie_i\in \R^{k_\alpha}:  x_1+\dots +x_{k_\alpha}=area(Q), x_i\geq 0, \text{ and for each}\\
\hspace{.0in}\text{edge $\e$ of $T$ labeled by $\leftrightarrow$ we have $x_{i'_1}+\dots+x_{i'_{k'}}= area(Q')$, and for each}\\
\hspace{.0in}\text{edge $\e$ of $T$ directed from $Q'$ to $Q''$ we have $x_{i'_1}+\dots+x_{i'_{k'}}\geq area(Q')$}\Big\}.
\end{multline*}
Note that if 
$\e$ is directed from $Q'$ to $Q''$ 
and $\{e_{i''_1},\dots, e_{i''_{k''}}\}$ is nonempty, then
for $w=\sum_{i=1}^{k_\alpha} x_ie_i$ in $\Lambda_T$,
we automatically have $x_{i''_1}+\dots+x_{i''_{k''}}\leq area(Q'')$.

The next two lemmas will show that $\Lambda_T$ and $\Delta_T$ are in fact equal.

\begin{lem}\label{LEM:inequalities}
Let $T$ be an $\alpha$-tree.
The space $\Delta_T\subseteq \R^{k_\alpha}$ is contained in $\Lambda_T$.
\end{lem}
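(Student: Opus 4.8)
The plan is to exploit that $\Lambda_T$ is defined by finitely many linear equalities and non-strict inequalities, hence is a convex subset of $\R^{k_\alpha}$. Since $\Delta_T = conv(\{w_{T'} : T'\in max(T)\})$ is by definition the convex hull of the points $w_{T'}$, it suffices to show that each generator $w_{T'}$ lies in $\Lambda_T$; the inclusion $\Delta_T\subseteq\Lambda_T$ then follows by convexity. So I fix a maximally expanded $T'\in max(T)$ and write its coordinates as $x_i = area(Q_{T'}(i))$, so that $w_{T'}=\sum_i x_i e_i$ as in \eqref{EQN:wT}. I then verify the three families of conditions defining $\Lambda_T$ at this single point.

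First I would dispose of the global condition $x_1+\dots+x_{k_\alpha}=area(Q)$ together with $x_i\geq 0$: this is exactly the statement that $w_{T'}\in\Delta_Q$, which holds because the subpolygons $Q_{T'}(i)$ tile $Q$ with zero-area overlaps (Lemma \ref{LEM:DeltaQ-easy}). Next I would treat the equality conditions coming from the edges $\e$ of $T$ labeled $\leftrightarrow$. The key point is that the $\leftrightarrow$-labels of $T$ persist under edge expansion: passing from $T$ to the more expanded $T'$ corresponds, via $f$, to a formal edge expansion of $(S_T,E_T)$ (as established earlier), and none of the three formal edge expansion moves of Definition \ref{DEF:formal-expansion-and-dim} ever converts a $\leftrightarrow$ into a direction — only the reverse (collapsing) move can do so. Hence every edge of $T$ labeled $\leftrightarrow$ is still a $\leftrightarrow$-edge in $E_{T'}$, so its diagonal is one of the cuts used to form the subpolygons $Q_{T'}(i)$. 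Cutting $Q$ along this diagonal separates $Q'$ from $Q''$, and the subpolygons attached to the outgoing vertices on the $Q'$-side tile exactly $Q'$; this gives $x_{i'_1}+\dots+x_{i'_{k'}}=area(Q')$, as required.

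The heart of the argument, and the step I expect to be the main obstacle, is verifying the inequality conditions attached to a directed edge $\e$ of $T$. Under the expansion $T\rightsquigarrow T'$ a formal edge expansion can only leave such a directed edge with its original orientation or convert it into a $\leftrightarrow$; it can never reverse its orientation. In the first case the edge becomes a cut and the inequality holds with equality, exactly as in the $\leftrightarrow$ case. In the remaining case $\e$ is still a genuinely directed edge of $T'$, so its diagonal is not a cut and the subpolygon straddling it is shared: the region lying on the upstream side of $\e$ flows across $\e$ and is absorbed into a subpolygon belonging to a downstream outgoing vertex. Tracking this flow shows that the outgoing vertices on one side of $\e$ collect all of their own polygon's area together with the area transported across $\e$, while the vertices on the other side collect at most their polygon's area; the transported area is precisely the slack in the inequality defining $\Lambda_T$. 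The delicate part is to check that the orientation of $\e$ and the partition of the outgoing vertices into $\{i'_1,\dots,i'_{k'}\}$ and $\{i''_1,\dots,i''_{k''}\}$ line up with the inequality as stated (and with the complementary observation recorded just after the definition of $\Lambda_T$, which uses $\sum_i x_i = area(Q)$), and this is a matter of carefully following the flow induced by the essential spine $E_{T'}$ across the diagonal of $\e$. Once this bookkeeping is pinned down for a single directed edge, the same analysis applies simultaneously to every edge of $T$, completing the verification that $w_{T'}\in\Lambda_T$.
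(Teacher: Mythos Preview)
Your approach is correct and essentially the same as the paper's: reduce by convexity of $\Lambda_T$ to checking that each generator $w_{T'}$ lies in $\Lambda_T$, dispose of the global conditions via Lemma~\ref{LEM:DeltaQ-easy}, and then verify the edge-by-edge (in)equalities by analyzing how the subpolygons $Q_{T'}(i)$ sit relative to the diagonal determined by~$\e$. The only stylistic difference is that you route the argument through the formal-edge-expansion machinery of Definition~\ref{DEF:formal-expansion-and-dim} to see that $\leftrightarrow$-labels persist and directed labels never reverse under expansion, whereas the paper argues the subdivision directly, using a small counting observation (a polygon with $k$ outgoing edges cut into $k$ pieces uses exactly $k-1$ cuts) to locate the single ``extra'' subpolygon on the upstream side that produces the slack in the inequality; your flow-tracking description and the paper's counting description are two phrasings of the same phenomenon. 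One minor point: your sentence ``In the first case the edge becomes a cut'' has the two cases named in the opposite order from how you listed them, though the subsequent discussion makes your intent clear.
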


\begin{proof}
Since $\Lambda_T$ is convex, it is enough to check that for any maximal expansion $T'$ of $T$, $w_{T'}$ is in $\Lambda_T$.
By Lemma~\ref{LEM:DeltaQ-easy}, $w_{T'}$ satisfies the first two conditions.

If $\e$ in $T$ has label $\leftrightarrow$, then $Q$ gets divided into $Q'$ and $Q''$, which
in $T'$
get further divided into regions with areas $x_{i'_1},\dots ,x_{i'_{k'}}$.
 Therefore, the sum of the coordinates $x_{i'_1}+\dots +x_{i'_{k'}}$ is equal to $area(Q')$.
See Figure \ref{FIG:Q-subdivided-Q'Q''}.
\begin{figure}[h]
\[
 \includegraphics[scale=0.9]{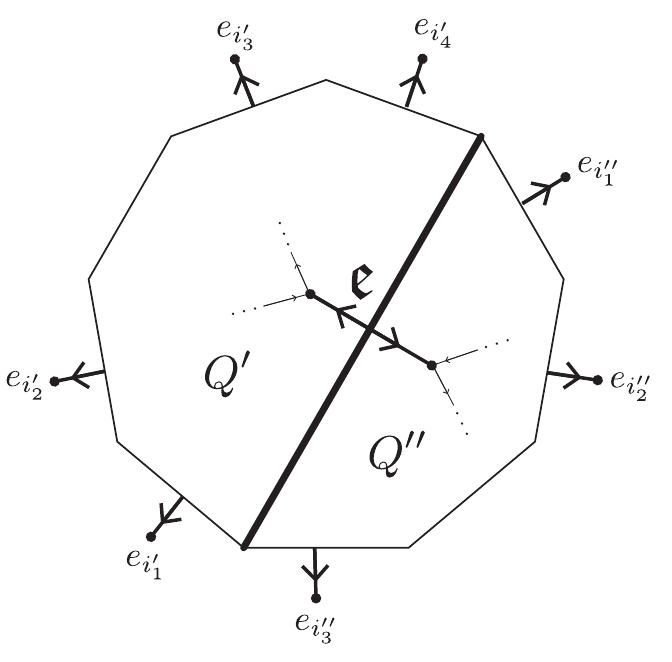}
\]
\caption{$Q$ subdivided by $Q'$ and $Q''$ via ``$\leftrightarrow$'' along the edge $\e$}\label{FIG:Q-subdivided-Q'Q''}
\end{figure}

It follows by a straightforward induction, that any convex polygon with $k$ outgoing edges that is divided into $k$ subpolygons has exactly $k-1$ dividing edges. Thus, if there is a direction from $Q''$ to $Q'$, then $Q'$ is subdivided into $k'$ many subpolygons, while $Q''$ is divided into $k''+1$ many subpolygons; see Figure \ref{FIG:Q-subdivided-Q'Q''-arrow}.
\begin{figure}[h]
\[
 \includegraphics[scale=0.9]{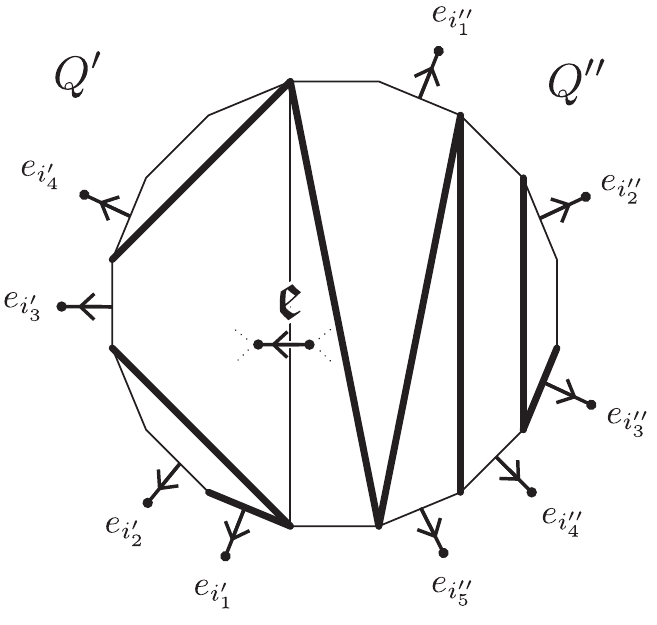}
\]
\caption{
$Q$ subdivided by $Q'$ and $Q''$ via an arrow from $Q''$ to $Q'$ along the edge $\e$}\label{FIG:Q-subdivided-Q'Q''-arrow}
\end{figure}

The subpolygon of $Q''$ corresponding to the outgoing edge $\e$ may thus provide an additional area from $Q''$ to be added to the outgoing edges of $Q'$. This shows that $x_{i'_1}+\dots+x_{i'_{k'}}\geq area(Q')$. 
\end{proof}

Notice that the map that takes $max(T)$ to $\R^{k_\alpha}$ by sending the maximally expanded tree $T'$ to $w_{T'}$ is not injective in general.
In particular, let $T^\circ$ be an expansion of $T$ obtained by replacing the labels on all but one outgoing edge at each vertex $\ve$ by $\leftrightarrow$.
Let $T_1'$ and $T_2'$ in $max(T)$ be expansions of $T^\circ$.
Then $T_1'$ and $T_2'$ yield the same decomposition of $Q$ because none of the the edges of $T_1'$ or $T_2'$ that do not appear in $T^\circ$ have the label $\leftrightarrow$.
Therefore the vectors 
$w_{T_1'}$ and $w_{T_2'}$ 
are equal.
We denote such vectors by $w_{T^\circ}$.
We will use the convex hull of all such $w_{T^\circ}$ as another auxiliary space: $W_T:=conv(\{w_{T^\circ} \;|\; T^\circ \text{ is obtained from } T \text{ as above}\})$.

We use a product of simplices to organize the set of such expansions $T^\circ$ as follows.
First note that, if the vertices of $E_T$ are $\ve_1, \dots, \ve_p$,
the set of expansions $T^\circ$ obtained from $T$ by changing edge labels as above is in bijective correspondence with the $0$-cells of the cell complex $\Delta^{o_{\ve_1}-1}\times \dots \times \Delta^{o_{\ve_p}-1}$.
Namely, for the vertex $\ve_i$ of $T$, if all but the $j$-th outgoing edge is relabeled by $\leftrightarrow$ in $T^\circ$, this corresponds to the $0$-cell $(0, \dots, 1, \dots, 0)$ of the factor $\Delta^{o_{\ve_i}-1}$ with $1$ in the $j$-th position.

Below, we will define an extension $h_T: \Delta^{o_{\ve_1}-1}\times \dots \times \Delta^{o_{\ve_p}-1} \to \Delta_T$ of the map that sends the $0$-cells of $\Delta^{o_{\ve_1}-1}\times \dots \times \Delta^{o_{\ve_p}-1}$ to the corresponding coordinates $w_{T^\circ}$.
This map $h_T$ will turn out to be a homeomorphism.

\begin{lem}\label{LEM:Delta_T}
The spaces $\Delta_T$, $\Lambda_T$, and $W_T$ are all equal:
\[ \Delta_T=\Lambda_T=W_T \]
Furthermore, there exists a homeomorphism $h_T: \Delta^{o_{\ve_1}-1}\times \dots \times \Delta^{o_{\ve_p}-1} \to \Delta_T$ which restricts to the map above that sends the $0$-cells of $\Delta^{o_{\ve_1}-1}\times \dots \times \Delta^{o_{\ve_p}-1}$ to the corresponding coordinates $w_{T^\circ}$.
\end{lem}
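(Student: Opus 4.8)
The plan is to prove the two equalities and the homeomorphism statement simultaneously by building $h_T$ as an explicit \emph{multi-affine} parametrization of $\Lambda_T$ whose image is forced to coincide with both $W_T$ and $\Delta_T$. Two of the needed inclusions come essentially for free. First, each vertex $w_{T^\circ}$ equals $w_{T'}$ for a maximal expansion $T'$ of $T^\circ$ (hence of $T$), so $w_{T^\circ}\in\Delta_T$, and since $\Delta_T$ is convex we get $W_T\subseteq\Delta_T$. Second, $\Delta_T\subseteq\Lambda_T$ is precisely Lemma~\ref{LEM:inequalities}. So it remains to construct $h_T$ and to close the loop with $\Lambda_T\subseteq\mathrm{im}(h_T)\subseteq W_T$.

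\textbf{Defining $h_T$ and the inclusion $\mathrm{im}(h_T)\subseteq W_T$.} I would read a point $\vec\mu=(\mu^1,\dots,\mu^p)$, with $\mu^i=(\mu^i_1,\dots,\mu^i_{o_{\ve_i}})\in\Delta^{o_{\ve_i}-1}$, as a choice of splitting proportions at each internal vertex $\ve_i$ of $E_T$: the fixed area carried across the $\leftrightarrow$ edges is left untouched, while at $\ve_i$ the area available to be pushed downstream is divided among the $o_{\ve_i}$ outgoing edges in the ratios $\mu^i_1:\dots:\mu^i_{o_{\ve_i}}$ and then transported, splitting again at each subsequent branch vertex, until it reaches the $k_\alpha$ external outgoing edges. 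Setting $h_T(\vec\mu)$ to be the resulting vector in $\R^{k_\alpha}$, the area reaching a given outgoing vertex is a fixed constant plus a sum, over directed paths to that vertex, of products of the $\mu^i_j$ encountered along the path. Since each such product involves at most one factor from each vertex, $h_T$ is affine in each $\mu^i$ separately, i.e. multi-affine, and therefore maps the whole product into the convex hull of the images of its $0$-cells. By construction a $0$-cell (each $\mu^i$ a standard basis vector, selecting one outgoing edge per vertex and relabelling the rest by $\leftrightarrow$) maps exactly to the corresponding $w_{T^\circ}$, so $\mathrm{im}(h_T)\subseteq W_T$.

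\textbf{Surjectivity, injectivity, and conclusion.} For the reverse inclusion $\Lambda_T\subseteq\mathrm{im}(h_T)$ I would invert the construction: given $w=\sum x_i e_i\in\Lambda_T$, recover the splitting proportions at each $\ve_i$ by comparing, for each outgoing edge of $\ve_i$, the total area that $w$ assigns to the outgoing vertices lying beyond that edge with the fixed base area of the corresponding subpolygon. The $\leftrightarrow$-equalities in the definition of $\Lambda_T$ guarantee that the fixed parts are respected, while the directed-edge inequalities $x_{i'_1}+\dots+x_{i'_{k'}}\geq area(Q')$ guarantee that the pushed amounts, hence the reconstructed $\mu^i_j$, are nonnegative; normalizing then yields $\mu^i\in\Delta^{o_{\ve_i}-1}$. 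This exhibits a preimage, proving surjectivity onto $\Lambda_T$, and since the proportions are read off uniquely it simultaneously shows $h_T$ is injective. Combining the inclusions, $\Lambda_T\subseteq\mathrm{im}(h_T)\subseteq W_T\subseteq\Delta_T\subseteq\Lambda_T$ forces all four to coincide, and $h_T\colon\prod_i\Delta^{o_{\ve_i}-1}\to\Lambda_T=\Delta_T$ is then a continuous bijection from a compact space to a Hausdorff space, hence a homeomorphism.

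\textbf{Main obstacle.} The crux is the flow/inversion step: making the informal ``area pushed downstream and split at each branch vertex'' fully precise, in particular accounting for degenerate zero-area subpolygons (as at the outgoing vertices $2,5,6$ in Figure~\ref{FIG:Cut-ET}) and for external edges labelled $\leftrightarrow$, and then verifying that the reconstructed proportions are genuinely nonnegative, sum to one, and are uniquely determined. Nonnegativity is exactly where the defining inequalities of $\Lambda_T$ must be used essentially, and uniqueness (needed for injectivity) is where the nondegeneracy of $Q$ enters, since it ensures that every subpolygon which must carry a split has positive area. To keep this manageable it may be cleanest to organize both the definition of $h_T$ and its inversion by induction on the $\leftrightarrow$ edges: such an edge splits $Q$ into $Q'$ and $Q''$ of fixed areas and correspondingly factors $\Lambda_T$, $W_T$, and the domain product as direct products over the two sides, reducing everything to the case of a single directed component, which can then be treated by a further induction peeling off one branch vertex at a time.
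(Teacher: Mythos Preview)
Your proposal is correct and follows essentially the same route as the paper: build $h_T$ as an area-distribution map into $\Lambda_T$, observe that it is affine in each simplex factor separately so its image lies in the convex hull $W_T$ of the images of the $0$-cells (which are the $w_{T^\circ}$, hence in $\Delta_T$), construct the explicit inverse to show $h_T$ is a bijection onto $\Lambda_T$, and close the loop $\Lambda_T\subseteq W_T\subseteq\Delta_T\subseteq\Lambda_T$ using Lemma~\ref{LEM:inequalities}. The paper organizes the inversion by peeling off a ``sink'' subpolygon $Q'$ (one with no outward arrow to another subpolygon) rather than by inducting on $\leftrightarrow$ edges as you suggest, and verifies the multi-affine step by iterating one factor at a time rather than invoking the notion globally, but these are presentational differences only.
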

\begin{proof}
 We will proceed by checking the following four facts.
\begin{enumerate}
\item\label{ITEM:def-of-h-T}
There is a continuous map $h_T:(\Delta^{o_{\ve_1}-1}\times \dots \times \Delta^{o_{\ve_p}-1})\to \Lambda_T$.
\item\label{ITEM:h-T-is-invertible}
The map $h_T$ has an inverse map $h^{-1}_T:\Lambda_T\to (\Delta^{o_{\ve_1}-1}\times \dots \times \Delta^{o_{\ve_p}-1})$.
\item\label{ITEM:corners-in-Delta}
The map $h_T$ maps each $0$-cell of $(\Delta^{o_{\ve_1}-1}\times \dots \times \Delta^{o_{\ve_p}-1})$ into $\Delta_T$.
\item\label{ITEM:image-h-t-conv-span}
The image of $h_T$ lies in the convex span of the $h_T$ applied to the $0$-cells of $(\Delta^{o_{\ve_1}-1}\times \dots \times \Delta^{o_{\ve_p}-1})$.
\end{enumerate}
From these four facts, all the remaining claims of the Lemma follow, since $h_T$ is a homeomorphism by \eqref{ITEM:def-of-h-T} and \eqref{ITEM:h-T-is-invertible}, and thus
\[
\Lambda_T\stackrel{\eqref{ITEM:def-of-h-T},\eqref{ITEM:h-T-is-invertible}}=image(h_T)\stackrel{\eqref{ITEM:image-h-t-conv-span}}\subseteq W_T \stackrel{\eqref{ITEM:corners-in-Delta}}\subseteq\Delta_T.
\]
By Lemma~\ref{LEM:inequalities}, we have $\Delta_T \subseteq \Lambda_T$, so each of these containments is non-proper, which completes the proof.

$\bullet$ To check \eqref{ITEM:def-of-h-T}, for each internal vertex $\ve_i$ of $E_T$ (where $i\in\{1,\dots, p\}$) with $o_{\ve_i}$ outgoing edges, we parametrize the standard simplex $\Delta^{o_{\ve_i}-1}$ with the coordinates
\[
\Delta^{o_{\ve_i}-1}=\{(t_{i,1},\dots,t_{i,o_{\ve_i}}): t_{i,1}+\dots+t_{i,o_{\ve_i}}=1, t_{i,1}\geq 0, \dots, t_{i,o_{\ve_i}}\geq 0 \}.
\]
Using the edges of $T$, the polygon $Q$ is subdivided into, say, $p$ subpolygons, by diagonals in $Q$ or by line segments at the boundary of $Q$. (Compare this with Definition \ref{DEF:w_T}, but now using all of $T$ instead of just the essential spine $E_T$.) Line segments at the boundary of $Q$ labeled with ``$\leftrightarrow$'' give subpolygons of $Q$ which are degenerate, and thus have zero area; see e.g. $Q_{11}$ and $Q_{12}$ in Figure \ref{FIG:Q-subdivided-for-ts}.
\begin{figure}[h]
\[
 \includegraphics[scale=1.2]{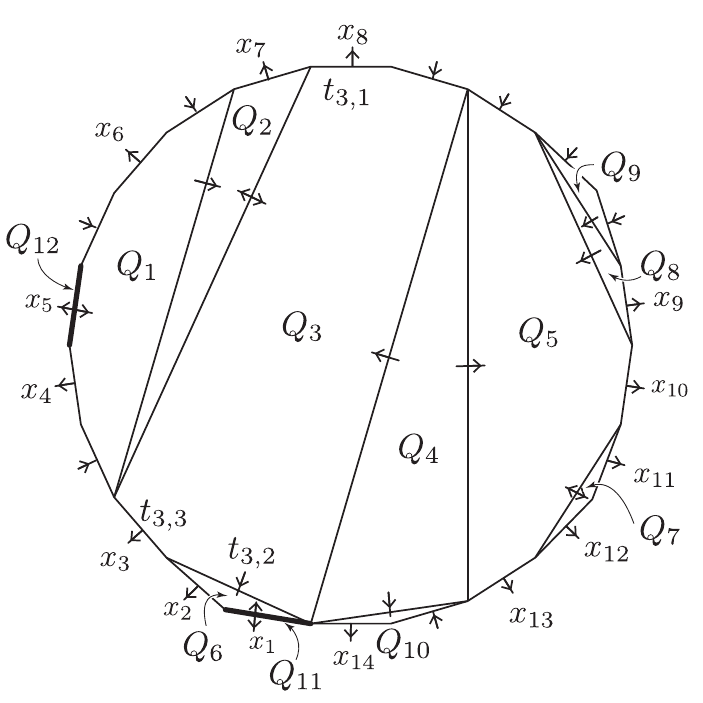}
\]
\caption{$Q$ subdivided into $Q_1,\dots,Q_{12}$; outgoing coordinates are marked by $x_1,\dots, x_{14}$; coordinates of $\Delta^{\ve_3-1}$ (as an example) are marked by $t_{3,1}$, $t_{3,2}$, $t_{3,3}$}\label{FIG:Q-subdivided-for-ts}
\end{figure}

Since $T$ is an $\alpha$-tree, there is a partial order on these subpolygons together with the outgoing coordinates given by the directions on $T$. For example, $Q$ from Figure \ref{FIG:Q-subdivided-for-ts} gives the following partial order:
\[\scalebox{0.57}{\xymatrix{
&&&&&& Q_4\ar[dll]\ar[ddrr]\ar[dd] && Q_9\ar[d] \\
&Q_1\ar[ddl]\ar[dd]\ar[dr]&&& Q_3\ar[dl]\ar[dd]\ar[ddr] &&&& Q_8\ar[d]\ar[ddr]\\
 &  & Q_2\ar[d] &   Q_6\ar[d]& & & Q_{10}\ar[d] & & Q_5\ar[dl]\ar[d] &&&  Q_7\ar[dl]\ar[dr] && Q_{12}\ar[d] & Q_{11}\ar[d]\\
x_{4} & x_{6} & x_{7} & x_{2} & x_{3} & x_{8} & x_{14}  & x_{10} & x_{13} & x_{9} & x_{11} & & x_{12} & x_{5} & x_{1} }}\]
Note that there are no cycles and that the minimal elements are the coordinates $x_j$. We define $h_T$ by distributing areas of the polygons $Q_i$ to the areas of the polygons appearing right below $Q_i$ in this partial order, respectively to the coordinates $x_j$ appearing below $Q_i$ in the partial order. In fact, for $(t_{i,1},\dots,t_{i,{o_{\ve_i}}})\in \Delta^{o_{\ve_i}-1}$, if $t_{i,j}$ is the coordinate for an edge from $Q_i$ to $Q_{i'}$, then we add $t_{i,j}\cdot area(Q_i)$ to the area of $Q_{i'}$. We will denote the area of $Q_{i'}$ with the added part by $area^+(Q_{i'})$. Since $\sum_{j=1}^{o_{\ve_i}} t_{i,j}=1$, all of the area of $Q_i$ gets distributed to the next polygon or coordinate $x_j$. Continuing in this way inductively with the newly adjusted ``$area^+$''s for our polygons, we arrive at our final output numbers $x_1,\dots, x_{k_\alpha}$ for which $\sum_{i=1}^{k_\alpha} x_i e_i$ gives the output under the map $h_T$. Note, that this map lands in $\Lambda_T$, since for each symbol $\leftrightarrow$, the areas are completely separated, while for each direction arrow, say from $Q''$ to $Q'$ some of the area from $Q''$ may be distributed to $Q'$, giving the wanted equalities and inequalities stated in the definition of $\Lambda_T$. Furthermore, it is clear that $h_T$ is continuous, as it is given by additions and multiplications.

$\bullet$ To check \eqref{ITEM:h-T-is-invertible}, we explicitly describe the inverse map of $h_T$. Starting from $\sum_{i=1}^{k_\alpha}x_i e_i\in \Lambda_T$, let $Q'$ be a subpolygon which has no outward pointing direction to any other subpolygon $Q_i$; for example $Q_2, Q_5, Q_6, Q_7, Q_{10}, Q_{11}$, and $Q_{12},$ in Figure \ref{FIG:Q-subdivided-for-ts}. Note that cutting along $Q'$ subdivides $Q$ into subpolygons; call them $R_1,\dots, R_q$ (for example in Figure \ref{FIG:Q-subdivided-for-ts}, take $Q'=Q_5$ and $R_1=Q_7$, $R_2=Q_8\cup Q_9$, $R_3=Q_1\cup Q_2\cup Q_3\cup Q_4\cup Q_5 \cup Q_6\cup Q_{10}\cup Q_{11}\cup Q_{12}$). Denote those coordinates that receive outgoing directions from $Q'$ by $x_{i'_1},\dots, x_{i'_{k'}}$, while the outgoing variables from $R_j$ for $j=1,\dots, q$, are denoted by $x_{i''_{j,1}},\dots, x_{i''_{j,k_j}}$. We define coordinates $(t'_1,\dots,t'_{k'})\in \Delta^{k'-1}$ by setting $t'_j:=\frac{x_{i'_j}}{x_{i'_1}+\dots+x_{i'_{k'}}}$, so that clearly $t'_1+\dots +t'_{k'}=1$. 

We claim that we can repeat this procedure for each subpolygon $R_j$. First, note that $R_j$ has exactly the outgoing edges with coordinates $x_{i''_{j,1}},\dots, x_{i''_{j,k_j}}$ together with a new outgoing edge that was pointed toward $Q'$. We want to associate a number $a_j$ to this new outgoing edge so that we can repeat the above procedure for $R_j$. From the (in-)equalities defining $\Lambda_T$,  we see that for each label ``$\leftrightarrow$'' between $Q'$ and $R_j$ there is an equality $x_{i''_{j,1}}+\dots+x_{i''_{j,k_j}}= area(R_j)$, while for each arrow from $R_j$ incoming into $Q'$ there is an inequality $x_{i''_{j,1}}+\dots+x_{i''_{j,k_j}}\leq area(R_j)$. Let $a_j:=area(R_j)-(x_{i''_{j,1}}+\dots+x_{i''_{j,k_j}})\geq 0$. (Informally, $a_j$ is the ``excess area'' that gets transferred from $R_j$ to $Q'$.) Thus, the outward pointing edges of $R_j$ have a total number of $a_j+ x_{i''_{j,1}}+\dots+x_{i''_{j,k_j}}=area(R_j)$ associated with them. Furthermore, these numbers satisfy the inequalities required in $\Lambda_{T_j}$ where $T_j$ is the tree that corresponds to the polygon $R_j$. (This can be seen, because each edge in $T_j$ determines an (in-)equality, which can be expressed in two ways: one involving $a_j$ and one not involving $a_j$. The (in-)equalities not involving $a_j$ are the same as in $\Lambda_{T_j}$ and in $\Lambda_T$.) Thus, by induction, we can repeat this process and obtain coordinates in $\Delta^{o_{\ve_i}-1}$ for each internal vertex $\ve_i$ of $T$.

Finally we note that the above description is the inverse of $h_T$. To see this, starting from $w=\sum_{i=1}^{k_\alpha} x_i e_i\in \Lambda_T$, in the above notation using $Q', R_1,\dots, R_q$, we obtain the excess areas $a_j$ at each direction from $R_j$ to $Q'$. Now, to apply $h_T$, we need to assign to this the output coordinates $t_{i'_j}\cdot area^+(Q')=t_{i'_j}\cdot (area(Q')+\sum_{j=1}^q a_j)$, as stated in the definition of $h_T$ in \eqref{ITEM:def-of-h-T}.  According to the definition of $h_T^{-1}$, we have $t_{i'_j}=\frac{x_{i'_j}}{x_{i'_1}+\dots+x_{i'_{k'}}}$. Furthermore, since $area(Q)=\sum_{i=1}^{k_\alpha}x_i=(\sum_{j=1}^{k'}x_{i'_j})+\sum_{j=1}^q(\sum_{\ell=1}^{k_j} x_{i''_{j,\ell}})=(\sum_{j=1}^{k'}x_{i'_j})+\sum_{j=1}^q(area(R_j)-a_j)=(\sum_{j=1}^{k'}x_{i'_j})+area(Q)-area(Q')-\sum_{j=1}^q a_j$, it follows that $area(Q')+\sum_{j=1}^q a_j=x_{i'_1}+\dots+x_{i'_{k'}}$.  Thus, applying $h_T^{-1}$ composed with $h_T$ yields the coordinates $t_{i'_j}\cdot area^+(Q')=x_{i'_j}$, which gives $h_T(h^{-1}_T(w))=\sum_{i=1}^{k_\alpha} x_i e_i=w$.

Conversely, starting from $t_{i,j}$ in $\Delta^{o_{\ve_1}-1}\times \dots \times \Delta^{o_{\ve_p}-1}$, and applying $h_T$ to this, we obtain, the adjusted areas $area^+(Q')$, and from this the coordinates $x_{i'_j}=t_{i'_j}\cdot area^+(Q')$. Applying $h^{-1}_T$ to these gives $\frac{x_{i'_j}}{x_{i'_1}+\dots+x_{i'_{k'}}}=\frac{t_{i'_j}\cdot area^+(Q')}{(t_{i'_1}+\dots+ t_{i'_{k'}})\cdot area^+(Q')}=t_{i'_j}$. Thus $h_T^{-1}\circ h_T=id$ as well.

$\bullet$ Item \eqref{ITEM:corners-in-Delta} follows immediately since $w_{T^\circ}$ are coordinates corresponding to maximal expansions of $T$ and since $\Delta_T$ is convex.

$\bullet$ To check \eqref{ITEM:image-h-t-conv-span}, note that when fixing coordinates $t_{i,j}$ for all but one internal vertex $\ve_{i_0}$, the map $h_T$ becomes a map $\tilde h_T:\Delta^{o_{\ve_{i_0}}-1}\to \Lambda_T$, which is just an \emph{affine} map (given by distributing the area of $Q_{i_0}$ to the output coordinates $x_j$ and adding other fractional parts of areas to those). Thus, the image of such a $\tilde h_T$ is in the convex hull of the image of the $0$-cells of $\Delta^{o_{\ve_{i_0}}-1}$. Let $((t_{1,1},\dots,t_{1,o_{\ve_1}}),\dots,(t_{p,1},\dots, t_{p,o_{\ve_p}}))\in (\Delta^{o_{\ve_1}-1}\times \dots \times \Delta^{o_{\ve_p}-1})$ be the coordinates of any element in the domain of $h_T$. By \eqref{ITEM:corners-in-Delta}, we know that the images of $0$-cells $((0,\dots,1,\dots,0),\dots,(0,\dots,1,\dots,0))\in (\Delta^{o_{\ve_1}-1}\times \dots \times \Delta^{o_{\ve_p}-1})$ lie in $\Delta_T$. Fixing coordinates for $\ve_2,\dots ,\ve_p$, and letting $h_T$ depend only on $\Delta^{o_{\ve_1}-1}$, we see that the image of \[((t_{1,1},\dots,t_{1,o_{\ve_1}}),(0,\dots,1,\dots,0),\dots,(0,\dots,1,\dots,0))\] also lies in the convex set $\Delta_T$. Now, fixing $(t_{1,1},\dots,t_{1,o_{\ve_1}})\in \Delta^{o_{\ve_1}-1}$ as well as any $0$-cell in $\Delta^{o_{\ve_3}-1}, \dots, \Delta^{o_{\ve_p}-1}$, and letting $h_T$ only vary over $\Delta^{o_{\ve_2}-1}$, we see that the image of
\[
((t_{1,1},\dots,t_{1,o_{\ve_1}}),(t_{2,1},\dots,t_{2,o_{\ve_2}}),(0,\dots,1,\dots,0),\dots,(0,\dots,1,\dots,0))
\]
also lies in $\Delta_T$. Continuing this way, we see that the element we started with also maps to $\Delta_T$, i.e. $h_T((t_{1,1},\dots,t_{1,o_{\ve_1}}),\dots,(t_{p,1},\dots, t_{p,o_{\ve_p}}))\in \Delta_T$.
\end{proof}
\begin{cor}\label{COR:ZT=KTxDT}
The space
$Z_T$ is equal to the space
$K_T\times \Delta_T$, which is homeomorphic to a closed ball $B^d$ of dimension
\begin{eqnarray*}
d&=&(n_\alpha-3)-(\text{number of internal edges of }S_T)\\
&&+\sum_{\tiny \begin{array}{cc} \text{$\ve$: $\ve$ is internal}\\ \text{vertex of $E_T$}\end{array}}\Big((\text{number of outgoing edges of $\ve$})-1\Big).
\end{eqnarray*}
\end{cor}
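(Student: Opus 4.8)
The plan is to prove the two assertions of the corollary separately: first that the product $K_T\times\Delta_T$ is a closed ball of the stated dimension $d$, and then that the generators of $Z_T$ already fill out this entire product. Since the inclusion $Z_T\subseteq K_T\times\Delta_T$ is noted in Definition~\ref{DEF:KT-DeltaT-ZT}, the only thing left for the equality is the reverse inclusion $K_T\times\Delta_T\subseteq Z_T$.

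For the dimension count and the ball structure I would treat the two factors independently. The set $K_T$ is the convex hull of those vertices $v_t$ of the secondary polytope $K_Q$ for which $t$ refines the subdivision of $Q$ determined by $S_T$; by the standard face structure of secondary polytopes (Definition~\ref{DEF:secondary-polytope} and \cite{GKZ}), this is precisely the face of the associahedron $K_Q\cong K_{n_\alpha-1}$ indexed by $S_T$. Its codimension in $K_Q$ is the number of diagonals present in the subdivision, i.e. the number of internal edges of $S_T$, so $K_T$ is a convex polytope of dimension $(n_\alpha-3)-(\text{number of internal edges of }S_T)$, hence a ball of that dimension. For the second factor, Lemma~\ref{LEM:Delta_T} supplies a homeomorphism $h_T\colon \Delta^{o_{\ve_1}-1}\times\dots\times\Delta^{o_{\ve_p}-1}\to\Delta_T$, where $\ve_1,\dots,\ve_p$ run over the internal vertices of $E_T$ and $o_{\ve_i}$ is the number of outgoing edges of $\ve_i$. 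Thus $\Delta_T$ is a ball of dimension $\sum_{\ve}(o_\ve-1)$, the sum over internal vertices of $E_T$. Since a product of closed balls is again a closed ball whose dimension is the sum of the factor dimensions, $K_T\times\Delta_T\cong B^d$ with $d$ exactly as in the statement.

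For the equality $Z_T=K_T\times\Delta_T$, I would show that every vertex of the product polytope $K_T\times\Delta_T$ is one of the generators $(v_t,w_{T'})$ of $Z_T$; since $Z_T$ is convex and $K_T\times\Delta_T$ is the convex hull of its vertices, this suffices. A vertex of $K_T\times\Delta_T$ is a pair $(v_t,w)$ with $v_t$ a vertex of $K_T$ (so $t$ is a triangulation of $Q$ refining $S_T$) and $w$ a vertex of $\Delta_T$; as $\Delta_T=conv(\{w_{T^\circ}\})$, we have $w=w_{T^\circ}$ for one of the direction types $T^\circ$ described just before Lemma~\ref{LEM:Delta_T}. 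The task is then to realize $(v_t,w_{T^\circ})$ by a single maximally expanded tree $T'\in max(T)$. Using the bijection $f$ of Lemma~\ref{LEM:T-SE-bijection}, I would set $S_{T'}$ to be the maximally expanded Stasheff tree corresponding to $t$, and take $E_{T'}$ to be the refinement of $E_{T^\circ}$ carried by the subtree of $S_{T'}$ connecting the outgoing positions, retaining all $\leftrightarrow$-labels of $E_{T^\circ}$ and orienting each new edge along the existing flow toward the unique outgoing edge of its vertex. Then $v_{T'}=v_t$ by construction; and because the refining edges carry directions rather than $\leftrightarrow$-labels, they introduce no new cuts, so the decomposition of $Q$ into subpolygons is unchanged from that of $T^\circ$ and the area vector is preserved, giving $w_{T'}=w_{T^\circ}$ (compare the remark preceding Lemma~\ref{LEM:Delta_T} and the area computation of Definition~\ref{DEF:w_T}). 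Hence $(v_t,w_{T^\circ})$ is a generator of $Z_T$, which is all that the vertex argument requires.

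I expect the main obstacle to be exactly this realizability step, namely confirming that the choice of triangulation refining $S_T$ and the choice of direction type $T^\circ$ are genuinely independent and can be realized simultaneously by one $\alpha$-tree. Concretely, when a high-valence internal vertex of $E_{T^\circ}$ is subdivided by the passage to $t$, one must check that the newly created edges can all be coherently oriented so that each resulting internal vertex again has exactly one outgoing edge (so that $T'$ is a legitimate maximally expanded $\alpha$-tree by condition~\eqref{int-vertex} of Definition~\ref{DEF:directed-trees}), and that this local reorientation neither merges nor splits any subpolygon $Q_{T^\circ}(i)$, so that all areas — and hence $w_{T'}$ — are exactly those of $w_{T^\circ}$. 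Once this local orientation argument is in place, every vertex of $K_T\times\Delta_T$ is seen to be a generator of $Z_T$, and convexity finishes the inclusion and hence the equality.
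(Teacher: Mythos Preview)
Your proposal is correct and follows essentially the same route as the paper: reduce the inclusion $K_T\times\Delta_T\subseteq Z_T$ to showing that each pair $(v_t,w_{T^\circ})$ lies in $Z_T$, and realize such a pair by building a maximally expanded $\alpha$-tree on the Stasheff tree of $t$ that inherits the $\leftrightarrow$-labels of $T^\circ$ and directs all remaining edges; the dimension formula is then read off from Lemma~\ref{LEM:Delta_T} and the face structure of the associahedron, exactly as you do. The paper's write-up of the realizability step is in fact terser than yours---it simply asserts that the new edges receive ``the unique directions'' making the result an $\alpha$-tree and that $w_{T'''}=w_{T^\circ}$ follows from $T'''$ being a maximal expansion of $T^\circ$---so the obstacle you flag is handled there no more explicitly than in your outline.
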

\begin{proof}
Clearly, $Z_T\subseteq K_T\times \Delta_T$. Conversely, an element in $K_T\times \Delta_T$ is
in the convex hull of  tuples $(v_t, w_{T''})$, where $v_t$ corresponds to a maximal tree $T'$ and $w_{T''}$ corresponds to a maximal tree $T''$. 
Since $Z_T$ is convex, it is enough to check that each such $(v_t, w_{T''})$  is in $Z_T$.
Since $\Delta_T$ is equal to $W_T$, the convex hull of $\{w_{T^\circ} \}$, it is enough to check that each $(v_t, w_{T^\circ})$ is in $Z_T$.

We claim that there exists a maximal expansion $T'''$ of $T^\circ$ whose underlying Stasheff tree $S_{T'''}$ is equal to $S_{T'}$.
To construct such a tree $T'''$ we start with $S_{T'}$ and change the labels of (some of) its edges.
Notice first that the underlying Stasheff trees $S_T$ and $S_{T^\circ}$ are equal since $T^\circ$ is obtained from $T$ purely by changing labels of some edges, and so $S_{T'}$ is an expansion of $S_{T^\circ}$.
To construct $T'''$, label edges of $S_{T'}$ as follows.
Edges of $S_{T'}$ that correspond to edges of $S_{T^\circ}$ are given the same labels as in $T^\circ$.
Edges of $S_{T'}$ that do not  correspond to edges of $S_{T^\circ}$ are given the unique directions so that $T'''$  satisfies the conditions of Definition~\ref{DEF:directed-trees}.

Since $T'''$ is a maximal expansion of $T^\circ$, $w_{T'''}$ is equal to $w_{T^\circ}$.
And since $S_{T'''}$ is equal to $S_{T'}$ they have the same vector $v_t$.
Therefore, $(v_t, w_{T^\circ})$ is equal to $(v_t, w_{T'''})$ which is in $Z_T$.

The dimension formula follows from the homeomorphism $h_T:(\Delta^{o_{\ve_1}-1}\times \dots \times \Delta^{o_{\ve_p}-1})\to \Delta_T$ from the proof of Lemma~\ref{LEM:Delta_T}, since $\Delta^{o_{\ve_1}-1}\times \dots \times \Delta^{o_{\ve_p}-1}$ has dimension $(o_{\ve_1}-1)+\dots +(o_{\ve_p}-1)$, while the associahedron $K_T$ is of dimension $(n_\alpha-3)-($number of internal edges of $S_T)$.
\end{proof}

To show that the cells $Z_T$ give $Z_\alpha$ the structure of a cell complex, we first analyze how the collection of spaces $Z_T$ sit inside the space $Z_\alpha$.
Recall that the \emph{relative interior} of a set $\mathcal S\subseteq \R^m$ is the interior of $\mathcal S$ as sitting inside its affine hull, and there is a similar version for the \emph{relative boundary} of $\mathcal S$.
\begin{lem}\label{LEM:Z-cell-properties} \quad
\begin{enumerate}
\item\label{ITEM:Z-disjoint-union}
$Z_\alpha$ is the disjoint union of the relative interiors of $Z_T$ over all trees in $\mathcal T_\alpha$:
\[
Z_\alpha
=\coprod
_{T\in \mathcal T_\alpha} \ri(Z_T)  
=\coprod
_{T\in \mathcal T_\alpha} \ri(K_T)\times \ri(\Delta_T). 
\]
\item\label{ITEM:Z-boundary}
The relative boundary of $Z_T$ is equal to the union of all $Z_{T'}$ where $T'$ is an edge expansion of $T$:
\[
\rbd(Z_T)= 
\bigcup_{\tiny \begin{array}{cc} \text{$T'\in \mathcal T_\alpha$: $T'$ is edge}\\ \text{expansion of $T$}\end{array}} Z_{T'}
\]
\end{enumerate}
\end{lem}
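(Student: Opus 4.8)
The plan is to prove both statements using the product structure $Z_T = K_T \times \Delta_T$ established in Corollary~\ref{COR:ZT=KTxDT}, reducing each claim about $Z_\alpha$ to the corresponding claims about the associahedron factor $K_Q$ and the simplex factor $\Delta_Q$ separately. Since relative interiors and relative boundaries of products decompose as $\ri(A \times B) = \ri(A)\times\ri(B)$ and $\rbd(A\times B) = (\rbd(A)\times B)\cup(A\times\rbd(B))$, the whole argument rests on understanding how the cells $K_T$ subdivide $K_Q$ and how the cells $\Delta_T$ subdivide $\Delta_Q$, and then combining these via the bijection $f:\mathcal T_\alpha\to\mathcal{SE}_\alpha$.

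**For part~\eqref{ITEM:Z-disjoint-union},** the key observation is that the secondary polytope $K_Q$ is the associahedron $K_{n_\alpha-1}$ with its standard cell structure, whose cells $\ri(K_S)$ are indexed by Stasheff-type trees $S$; this gives the disjoint-union decomposition $K_Q = \coprod_S \ri(K_S)$ as a known fact about the secondary polytope. Separately, I would show $\Delta_Q = \coprod_E \ri(\Delta_E)$, where the pieces are indexed by the essential-spine data. The homeomorphism $h_T$ from Lemma~\ref{LEM:Delta_T} identifies $\Delta_T$ with a product of simplices $\Delta^{o_{\ve_1}-1}\times\dots\times\Delta^{o_{\ve_p}-1}$, and the relative interior of a product of simplices is the product of open simplices, corresponding precisely to the choice, at each internal vertex of $E_T$, of a \emph{strictly positive} distribution among its outgoing edges. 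A point of $\Delta_Q$ then lies in exactly one $\ri(\Delta_E)$: reading off which coordinate sums $x_{i'_1}+\dots+x_{i'_{k'}}$ equal $area(Q')$ versus strictly exceed it (using the defining (in)equalities of $\Lambda_T=\Delta_T$) determines uniquely which edges carry the $\leftrightarrow$ label and which carry a strict direction, hence recovers $E$ uniquely. Combining the two factorizations through the bijection $f(T)=(S_T,E_T)$ and the compatibility condition yields $Z_\alpha = \coprod_{T\in\mathcal T_\alpha}\ri(K_T)\times\ri(\Delta_T) = \coprod_T \ri(Z_T)$.

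**For part~\eqref{ITEM:Z-boundary},** I would apply the product boundary formula to $Z_T=K_T\times\Delta_T$. The relative boundary of the associahedral cell $K_T$ is the union of $K_{S'}$ over one-edge (hence all) expansions $S'$ of $S_T$ — this is the standard face structure of the associahedron. The relative boundary of $\Delta_T$, via the homeomorphism $h_T$, is the union of the proper faces of the product of simplices; a facet of the factor $\Delta^{o_{\ve_i}-1}$ corresponds to setting one outgoing-edge coordinate at $\ve_i$ to zero, i.e.\ relabeling that edge by $\leftrightarrow$, which is exactly a formal edge expansion of type~\eqref{expansion-S'=S} in Definition~\ref{DEF:formal-expansion-and-dim}. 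Expanding the product $\rbd(K_T\times\Delta_T)=(\rbd(K_T)\times\Delta_T)\cup(K_T\times\rbd(\Delta_T))$ and matching each boundary face to the tree data, I would use Lemma~\ref{LEM:T-SE-bijection} and the two equivalence lemmas (that edge expansion of $T$ corresponds to formal edge expansion of $(S_T,E_T)$) to identify precisely the three types of one-edge formal expansions — two coming from $\rbd(K_T)$ (cases~\eqref{expansion-E'=E},~\eqref{expansion-new-leftright}) and one from $\rbd(\Delta_T)$ (case~\eqref{expansion-S'=S}) — thereby showing the boundary is exactly $\bigcup_{T'} Z_{T'}$ over edge expansions $T'$ of $T$.

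**The main obstacle** I anticipate is the bookkeeping in part~\eqref{ITEM:Z-boundary}: a single boundary face of the \emph{product} $K_T\times\Delta_T$ need not be of the form $K_{T'}\times\Delta_{T'}$ for a single tree $T'$, because a facet of $\Delta_T$ corresponds to an expansion that changes only the essential spine while fixing $S_T$, and conversely a facet of $K_T$ may or may not introduce a spine edge. The delicate point is that when a facet of $K_T$ expands $S_T$ by a new edge lying \emph{on} the spine (case~\eqref{expansion-new-leftright}), the corresponding piece of $\rbd(K_T)\times\Delta_T$ must be recognized as $Z_{T'}$ for the appropriately-labeled $T'$, and I must verify that these product faces and their labels are in bijection with edge expansions $T'$ of $T$ with no double-counting. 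I expect this matching to follow cleanly once the product-boundary decomposition is pushed through $f$ and the formal-expansion lemmas, but confirming that every one-edge formal expansion arises exactly once and that the resulting $Z_{T'}$ fills out the whole boundary is where the careful work lies.
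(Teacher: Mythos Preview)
Your plan matches the paper's proof essentially line for line: both parts reduce to the product decomposition $Z_T = K_T \times \Delta_T$, the known cell structure of the secondary polytope $K_Q$, and the description $\Delta_T = \Lambda_T$ via (in)equalities from Lemma~\ref{LEM:Delta_T}, with the essential spine recovered from a point $w\in\Delta_Q$ by reading off which sums $x_{i'_1}+\dots+x_{i'_{k'}}$ equal $area(Q')$ and which strictly exceed it. The obstacle you flag in part~\eqref{ITEM:Z-boundary} is exactly the delicate case the paper isolates: when a facet of $K_T$ introduces a new spine edge $\e$ whose two endpoints already carry an outgoing edge, that face is \emph{not} a single $Z_{T'}$ but the union $Z_{T'_\rightarrow}\cup Z_{T'_\leftarrow}$, because the extra (in)equality attached to $\e$ splits $\Delta_T = \Delta_{T'_\rightarrow}\cup\Delta_{T'_\leftarrow}$; once you see this, the bookkeeping you anticipate goes through cleanly.
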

\begin{proof}
For item \eqref{ITEM:Z-disjoint-union}, we need to show that the $\ri(K_T)\times \ri(\Delta_T)$ are all disjoint and that $Z_\alpha=\bigcup_{T\in \mathcal T_\alpha} \ri(Z_T)$. For the disjoint property, let $T$ and $T'$ be any two $\alpha$-trees and assume that there is an intersection, i.e. $(\ri(K_T)\times \ri(\Delta_T))\cap (\ri(K_{T'})\times \ri(\Delta_{T'}))\neq \varnothing$. Then, $\ri(K_T)\cap \ri(K_{T'})\neq \varnothing$ and $\ri(\Delta_T)\cap \ri(\Delta_{T'})\neq \varnothing$. The first implies that $T$ and $T'$ must have the same Stasheff tree $S_T=S_{T'}$, since this must certainly be true for the associahedra. Now, by Lemma \ref{LEM:Delta_T}, $\Delta_T$ and $\Delta_T'$ is given by some equalities and inequalities. Now, in the relative interior, the inequalities must necessarily be strict, so that $\ri(\Delta_T)$ and $\ri(\Delta_{T'})$ have an intersection only if all edges have the same direction associated with them. Thus, $E_T=E_{T'}$, so that $T=T'$. This show that $\ri(Z_T)$ and $\ri(Z_{T'})$ are disjoint when $T\neq T'$.

To show the union $Z_\alpha=\bigcup_{T\in \mathcal T_\alpha} \ri(Z_T)$ the only non-trivial part is that $Z_\alpha\subseteq \bigcup_{T\in \mathcal T_\alpha} \ri(Z_T)$. Consider an element in $Z_\alpha=K_Q\times \Delta_Q$, call it $(v,w)\in K_Q\times \Delta_Q$. Since $K_Q$ is the union of the relative interiors of $K_T$, there is a Stasheff tree $S$, such that $v\in \ri(K_S)$. We need to find labels (i.e. directions or $\leftrightarrow$) on the edges of $S$ so that we obtain an essential spine $E$ compatible with $S$ and then obtain a tree $T$ with $S_T=S$, and with $w\in \Delta_T$. Let $E$ be the tree obtained from $S$ by connecting all the outgoing edges given according to the labels from $\alpha$. We label the edges of $E$ by placing directions or $\leftrightarrow$ according to the description provided by Lemma \ref{LEM:inequalities}. If $w=\sum x_i e_i\in \Delta_Q$, and $\e$ is an edge in $S$, subdividing $Q$ into $Q'$ and $Q''$, and subdividing the set of basis vectors of the space of outgoing edges $\R^{k_\alpha}$ into $\{e_{i'_1},\dots,e_{i'_{k'}}\}$ and $\{e_{i''_1},\dots,e_{i''_{k''}}\}$, then we place $\leftrightarrow$ if $x_{i'_1}+\dots+x_{i'_{k'}}=area(Q')$, we place an arrow from $Q''$ to $Q'$ if $x_{i'_1}+\dots+x_{i'_{k'}}> area(Q')$, and we place an arrow from $Q'$ to $Q''$ if $x_{i''_1}+\dots+x_{i''_{k''}}> area(Q'')$. We claim that these directions make $E$ into an essential spine according to the Definition \ref{DEF:essential-spine}. Item \eqref{ITEM:choice-alpha} in Definition \ref{DEF:essential-spine} is immediate by the choice of $E$. Next, item \eqref{ITEM:label-edges} follows since each $x_i\geq 0$ (see Lemma \ref{LEM:DeltaQ-easy}) and thus the external edges are labeled outgoing or with the symbol $\leftrightarrow$ (when $x_i=0$). To check \eqref{ITEM:internal-one-outgoing}, we need to see that every internal vertex has at least one outgoing edge. In fact, if not, then there is a subpolygon, call it $Q'$, so that all arrows are incoming to $Q'$. Call the other remaining subpolygons $R_1,\dots R_p$, where $R_j$ has outgoing edges whose basis vectors are $\{e_{i_{j,1}},\dots, e_{i_{j,k_j}}\}$ for $j=1,\dots ,p$; see Figure \ref{FIG:Q-subdivided}. 

\begin{figure}[h]
\[
 \includegraphics[scale=1]{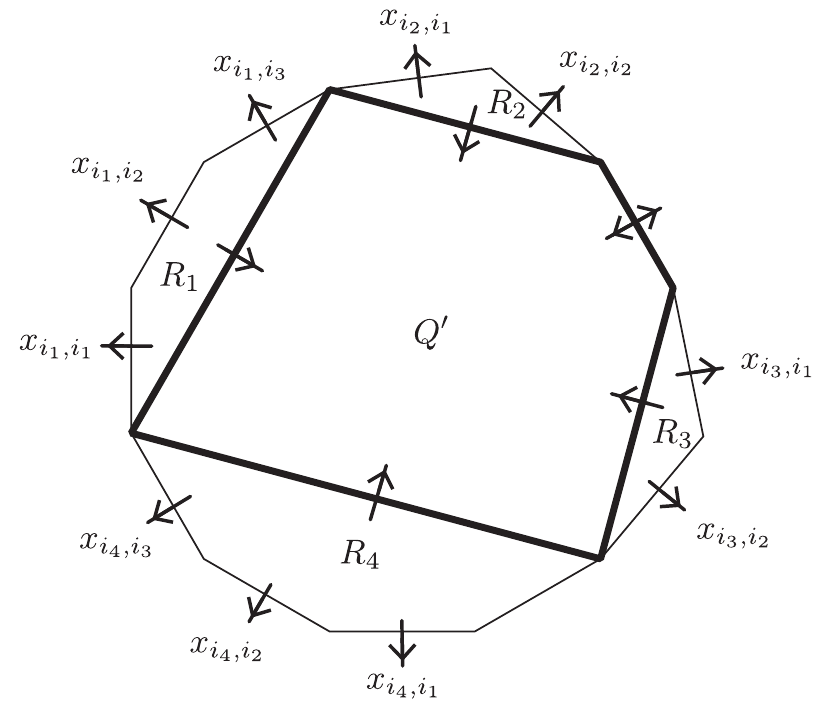}
\]
\caption{$Q$ subdivided by $Q'$ and $R_1,\dots,R_p$}\label{FIG:Q-subdivided}
\end{figure}

Since all the edges are incoming into $Q'$, the coordinates $x_{i_{j,1}}+\dots+x_{i_{j,k_j}}$ are less than or equal to $area(R_j)$ for each $j=1,\dots, p$. Adding all of these gives $\sum_i x_i\leq area(R_1)+\dots+area(R_p)=area(Q)-area(Q')$. Since $\sum_i x_i=area(Q)$, this means that $area(Q')\leq 0$, which is a contradiction. Thus, item \eqref{ITEM:internal-one-outgoing} from Definition \ref{DEF:essential-spine} is also satisfied, and we have an essential spine $E$. By choice, $E$ is compatible with $S$, and the corresponding $\alpha$-tree $T=f^{-1}(S,E)$ satisfies $w\in \Delta_T$ by the inequalities for $\Delta_T$ in Lemma \ref{LEM:Delta_T}. Thus, $(v,w)\in \ri(K_T)\times \ri(\Delta_T)$, showing $Z_\alpha\subseteq \bigcup_{T\in \mathcal T_\alpha} \ri(Z_T)$.

We now prove item \eqref{ITEM:Z-boundary}, that the relative boundary $\rbd(Z_T)$ is equal to the union of the $Z_{T'}$ of the edge expansions $T'$ of $T$. Since $Z_T=K_T\times \Delta_T$, we can write $\rbd(Z_T)=(\rbd(K_T)\times \Delta_T)\cup (K_T\times \rbd(\Delta_T))$. First, we check the inclusion $\rbd(Z_T)\supseteq  \bigcup_{\tiny \begin{array}{cc} \text{$T'$ is edge ex-}\\ \text{pansion of $T$}\end{array}} Z_{T'}$. If $T'$ is an edge expansion of $T$, then the underlying Stasheff graph $S_{T'}$ is equal to $S_T$ or it is an edge expansion of $S_T$. In the case where $S_{T'}$ is an edge expansion of $S_T$, it is well known that $K_{T'}$ is in the relative boundary of $K_T$. Furthermore, $\Delta_{T'}\subseteq \Delta_T$ since every maximal expansion of $T'$ is also a maximal expansion of $T$, so that in this case $Z_{T'}=K_{T'}\times \Delta_{T'}\subseteq \rbd(K_T)\times \Delta_T\subseteq \rbd (Z_T)$. In the case when $S_{T'}=S_T$, and thus $K_{T'}=K_T$, at least one of the labels in $E_{T'}$ was changed from a direction to $\leftrightarrow$. Using the description of $\Delta_T$ as the image of $h_T$ in Lemma \ref{LEM:Delta_T}, this shows that $\Delta_{T'}$ is in the relative boundary $\rbd(\Delta_T)$, so that again $Z_{T'}=K_{T'}\times \Delta_{T'}\subseteq K_T\times \rbd(\Delta_T)\subseteq \rbd (Z_T)$. Taking the union over all $T'$ shows that $\rbd(Z_T)\supseteq  \bigcup_{\tiny \begin{array}{cc} \text{$T'$ is edge ex-}\\ \text{pansion of $T$}\end{array}} Z_{T'}$.

Next, we check the other inclusion $\rbd(Z_T)=(\rbd(K_T)\times \Delta_T)\cup (K_T\times \rbd(\Delta_T))\subseteq \bigcup_{\tiny \begin{array}{cc} \text{$T'$ is edge ex-}\\ \text{pansion of $T$}\end{array}} Z_{T'}$.
In the case where we take the relative boundary of $K_T$, it is well known that the codimension one faces of the associahedra are given by edge expansions of $S_T$ by one edge, call the new edge $\e$. Let $T_0$ be the corresponding tree with the new edge $\e$, but without any direction label yet. In some cases $\e$ can only be labeled with a unique direction label, giving a new directed $\alpha$-tree $T'$. Since all maximal expansions of $T$ that include $\e$ also are maximal expansions of $T'$, we see that the subset of
$\rbd(K_T)\times \Delta_T$ corresponding to such an edge expansion is precisely $K_{T'}\times\Delta_{T'}$, labeled by this $T'$.

There is a second case, where $\e$ may be labeled with either direction as well as with $\leftrightarrow$. (This happens when the two internal vertices on either side of $\e$ already have at least one outgoing edge.) In this case, we obtain two expansions $T'_\rightarrow$ and $T'_\leftarrow$, with $\e$ labeled with either direction. According to Lemma \ref{LEM:Delta_T}, the direction of $\e$ induces one more inequality using an appropriate subpolygon $Q'$, i.e. $x_{i'_1}+\dots +x_{i'_{k'}}\geq area(Q')$ for $T'_\rightarrow$, or $x_{i'_1}+\dots +x_{i'_{k'}}\leq area(Q')$ for $T'_\leftarrow$, respectively. Moreover, by Lemma \ref{LEM:Delta_T}, $\Delta_{T'_\rightarrow}=\Delta_T\cap\{w=\sum x_i e_i: x_{i'_1}+\dots +x_{i'_{k'}}\geq area(Q')\}$ and $\Delta_{T'_\leftarrow}=\Delta_T\cap\{w=\sum x_i e_i: x_{i'_1}+\dots +x_{i'_{k'}}\leq area(Q')\}$, so that $\Delta_T=\Delta_{T'_\rightarrow}\cup \Delta_{T'_\leftarrow}$. Thus, 
the subset of 
 $\rbd(K_T)\times \Delta_T$ corresponding to such an edge expansion is precisely the union of the two spaces $K_{T'_\rightarrow}\times\Delta_{T'_\rightarrow}$ and $K_{T'_\leftarrow}\times\Delta_{T'_\leftarrow}$.

Finally, we consider the 
subset $K_T\times \rbd(\Delta_T)$ of $\rbd(Z_T)$.
Here, $T$ is expanded not by an expansion of $S_T$, but by replacing one of the labels of $E_T$ from a direction to $\leftrightarrow$. Then, define $T'$ by letting $S_{T'}=S_T$ and $E_{T'}$ be the new essential spine with label $\leftrightarrow$. Again, any maximal expansion of $T$ with this edge labeled $\leftrightarrow$ will also be a maximal expansion of $T'$ and so 
the subset of $K_T\times \rbd(\Delta_T)$ given by changing the label as described above
 is precisely $K_{T'}\times \Delta_{T'}$.

It follows that in all cases the relative boundary of $Z_T$ lies in the spaces $Z_{T'}$ where $T'$ is an edge expansion of $T$. This completes the proof of item \eqref{ITEM:Z-boundary}, and thus of the lemma.
\end{proof}

With all the prior work, we can now state and prove our main theorem.
\begin{thm}\label{THM:Zalpha-cell-complex}
The space $Z_\alpha$ has the structure of a cell complex where the cells are given by the subspaces $Z_T$ for $T$ in $\mathcal{T}_\alpha$.
This structure is a cellular subdivision of the product of an associahedron and a simplex $K_{n_\alpha-1}\times \Delta^{k_\alpha-1}$ in $\R^{n_\alpha}\times \R^{k_\alpha}$, each with their own natural cell complex structures.
\end{thm}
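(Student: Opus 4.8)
The plan is to verify directly that the collection $\{Z_T\}_{T\in\mathcal T_\alpha}$ satisfies the axioms of a finite (regular) CW complex, drawing each ingredient from the lemmas already established. Concretely, I must check three things: (i) the relative interiors $\ri(Z_T)$ partition $Z_\alpha$; (ii) each $Z_T$ is the homeomorphic image of a closed ball whose boundary maps onto $\rbd(Z_T)$; and (iii) the relative boundary of each $Z_T$ is a union of cells $Z_{T'}$ of \emph{strictly smaller} dimension. Since the index set $\mathcal T_\alpha$ is finite and $Z_\alpha\subseteq\R^{n_\alpha}\times\R^{k_\alpha}$ is compact, the weak-topology and closure-finiteness conditions are automatic, so (i)--(iii) suffice.

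Ingredients (i) and (ii) are immediate: item~\eqref{ITEM:Z-disjoint-union} of Lemma~\ref{LEM:Z-cell-properties} gives the disjoint-union decomposition $Z_\alpha=\coprod_{T}\ri(Z_T)$, while Corollary~\ref{COR:ZT=KTxDT} identifies $Z_T=K_T\times\Delta_T$ with a closed ball $B^d$ (and, being a product of two polytopes, the characteristic map is a homeomorphism onto $Z_T$ including its boundary, which is what regularity demands). For (iii), item~\eqref{ITEM:Z-boundary} of Lemma~\ref{LEM:Z-cell-properties} already expresses $\rbd(Z_T)$ as the union of the $Z_{T'}$ over edge expansions $T'$ of $T$; what remains is to confirm that every such $T'$ indexes a \emph{lower}-dimensional cell.

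The remaining point---and the one place where the explicit dimension formula is used---is to show that a formal edge expansion by a single edge drops $d$ by exactly $1$; iterating then gives strict monotonicity for arbitrary edge expansions, so that the boundary of each cell is genuinely assembled from cells of smaller dimension. I would check this against the three cases of Definition~\ref{DEF:formal-expansion-and-dim}, using $d=(n_\alpha-3)-\#\{\text{internal edges of }S_T\}+\sum_{\ve}(o_\ve-1)$, where $o_\ve$ is the number of outgoing edges at an internal vertex $\ve$ of $E_T$. In cases~\eqref{expansion-E'=E} and~\eqref{expansion-new-leftright} the tree $S_T$ gains one internal edge, lowering the first summand by $1$; in case~\eqref{expansion-E'=E} the spine is unchanged, while in case~\eqref{expansion-new-leftright} a vertex $\ve$ of $E_T$ splits into two vertices $\ve_1,\ve_2$ joined by a new directed edge, which distributes the $o_\ve$ old outgoing edges and adds one new outgoing edge, so that $(o_{\ve_1}-1)+(o_{\ve_2}-1)=(o_\ve+1)-2=o_\ve-1$ leaves the sum unchanged. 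In case~\eqref{expansion-S'=S} the Stasheff tree is fixed but one directed edge of $E_T$ becomes ``$\leftrightarrow$,'' removing one outgoing edge from its source vertex (which retains $\geq 1$, since $E'$ is still a valid essential spine), so the sum drops by $1$. In all three cases $d$ decreases by exactly $1$, confirming (iii) and hence that $\{Z_T\}$ is a regular CW complex.

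Finally, for the subdivision statement, I would use that $K_Q\cong K_{n_\alpha-1}$ carries its associahedron face structure, whose cells are exactly the $K_T$, and that $\Delta_Q\cong\Delta^{k_\alpha-1}$ carries its standard simplex structure with closed faces $F_J=\{\sum x_ie_i\in\Delta_Q: x_i=0\ \forall i\notin J\}$. Each $Z_T=K_T\times\Delta_T$ already has $K_T$ equal to a single associahedron cell, so it suffices to place $\Delta_T$ inside one face of the simplex. Taking $J_T=\{i: x_i\not\equiv 0\text{ on }\Delta_T\}$, the standard fact that a linear functional which is nonnegative on a convex set and not identically zero is strictly positive on its relative interior shows $\ri(\Delta_T)\subseteq\ri(F_{J_T})$, whence $\Delta_T\subseteq F_{J_T}$ and $Z_T\subseteq K_T\times F_{J_T}$, a cell of the product complex. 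Thus every cell of $Z_\alpha$ lies in a cell of $K_{n_\alpha-1}\times\Delta^{k_\alpha-1}$, exhibiting $Z_\alpha$ as a cellular subdivision of the product. I expect the genuine difficulty to have been front-loaded into Lemmas~\ref{LEM:Delta_T} and~\ref{LEM:Z-cell-properties}; within the theorem itself the only delicate bookkeeping is the dimension-monotonicity check above, which is exactly what guarantees a well-defined CW structure rather than a degenerate decomposition.
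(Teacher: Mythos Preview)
Your proposal is correct and follows essentially the same strategy as the paper: invoke Corollary~\ref{COR:ZT=KTxDT} for the ball structure, Lemma~\ref{LEM:Z-cell-properties}\eqref{ITEM:Z-disjoint-union} for the disjoint decomposition, Lemma~\ref{LEM:Z-cell-properties}\eqref{ITEM:Z-boundary} for the boundary description, and then verify that a single formal edge expansion lowers $d$ by exactly one. The paper's proof asserts this last point in one line (``from Definition~\ref{DEF:formal-expansion-and-dim} it follows\dots''), whereas you carry out the case-by-case count explicitly; your bookkeeping in cases~\eqref{expansion-S'=S}--\eqref{expansion-new-leftright} is correct. You also supply an argument for the ``subdivision of the product'' clause (placing $\Delta_T$ inside a simplex face $F_{J_T}$) that the paper leaves implicit, so if anything your write-up is slightly more complete on that point.
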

\begin{proof}
We define the $(d-1)$-skeleton of $Z_\alpha$ inductively by taking the union of all $Z_T$ over all $\alpha$-trees $T$, whose associated space $Z_T$ has dimension (from Corollary \ref{COR:ZT=KTxDT}) less than $d$. Now, if $T$ is so that $Z_T$ has dimension $d$, then by Lemma \ref{LEM:Z-cell-properties}\eqref{ITEM:Z-disjoint-union}, the relative interior of $Z_T$ is disjoint from the $(d-1)$-skeleton. By Lemma \ref{LEM:Z-cell-properties}\eqref{ITEM:Z-boundary}, the relative boundary of $Z_T$ lies in the cells $Z_{T'}$ of edge expansions $T'$ of $T$. From Definition \ref{DEF:formal-expansion-and-dim}, it follows, that an edge expansion $T'$ of $T$ by one edge exactly decreases the dimension of $Z_{T}$ by one. Thus, the relative boundary of $Z_T$ lies in the $(d-1)$-skeleton of $Z_\alpha$, and we can adjourn $Z_T$ as a new $d$-cell. Lemma \ref{LEM:Z-cell-properties}\eqref{ITEM:Z-disjoint-union} shows that this gives a cellular subdivision of $Z_\alpha$. 
\end{proof}

In addition, we obtain that $Z_\alpha$ is independent of some of the choices we made to define it.
\begin{cor} \label{COR: INDEPENDENT}
The cell complex $Z_\alpha$ is independent of the choice of the convex polygon $Q$ and the labels of $\alpha$ under cyclic rotation. More precisely:
\begin{enumerate}
\item\label{ITEM:QQ'} If $Q$ and $Q'$ are two convex $n_\alpha$-gons, then the cells of both $Z_{Q,\alpha}$ and $Z_{Q',\alpha}$ are labeled by the same set $\mathcal T_\alpha$, so that the map $(v_{t}(Q),w_{T}(Q))\mapsto (v_{t}(Q'),w_{T}(Q'))$ for $T\in\mathcal T_{\alpha}$ extended to convex hulls induces a cellular homeomorphism $Z_{Q,\alpha}\to Z_{Q',\alpha}$.
\item\label{ITEM:alpha^r} If $\alpha=(\alpha(1)\dots \alpha(n))$ is a list of labels (where $\alpha(j)\in \{\ii,\oo\}$ for $j=1,\dots, n$), and $\alpha^{\circlearrowright r}=(\alpha(r+1)\dots \alpha(n)\alpha(1)\dots \alpha(r))$ is the cyclic rotation by $0\leq r< n$ symbols, then there is a bijection $\tau_r:\mathcal T_{\alpha}\to \mathcal T_{\alpha^{\circlearrowright r}}$ given by cyclic rotation of the numbering of the external vertices. Then the map $(v_t,w_T)\mapsto (v_{\tau_r(t)},w_{\tau_r(T)})$ for $T\in\mathcal T_\alpha$ (with its induced map on a triangulation $t$) extended to convex hulls induces a cellular homeomorphism $Z_\alpha\to Z_{\alpha^{\circlearrowright r}}$.
\end{enumerate}
\end{cor}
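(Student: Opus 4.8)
The plan is to prove the two independence statements separately, in each case building a homeomorphism out of data that is manifestly combinatorial and therefore shared between the two realizations.

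For part \eqref{ITEM:QQ'}, the starting observation is that essentially the entire structure of $Z_\alpha$ is independent of $Q$: the index set $\mathcal T_\alpha$, the edge-expansion (face) poset, the bijection $f\colon\mathcal T_\alpha\to\mathcal{SE}_\alpha$, and the combinatorial domains $\Delta^{o_{\ve_1}-1}\times\dots\times\Delta^{o_{\ve_p}-1}$ appearing in Lemma~\ref{LEM:Delta_T} depend only on $\alpha$; only the vectors $v_t$, $w_T$ and the area thresholds cutting out the cells depend on the polygon. By Theorem~\ref{THM:Zalpha-cell-complex}, both $Z_{Q,\alpha}$ and $Z_{Q',\alpha}$ are thus cell complexes with cells $\{Z_T\}_{T\in\mathcal T_\alpha}$ indexed by the same set and with the same face relations. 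First I would fix, on the associahedron factor, the standard combinatorial homeomorphism $\Psi\colon K_Q\to K_{Q'}$ of secondary polytopes (any two secondary polytopes of an $n_\alpha$-gon realize the same associahedron $K_{n_\alpha-1}$; see \cite{GKZ}), normalized so that $\Psi(v_t(Q))=v_t(Q')$ and so that $\Psi$ carries the face $K_T(Q)$ onto $K_T(Q')$ for every $T$. On the simplex factor I would \emph{not} use a single affine map, since uniform scaling fails: the interior subdivision vertices $w_{T'}(Q)$ have the subpolygon areas of $Q$ as coordinates, and these are not proportional to those of $Q'$. Instead I would use the homeomorphisms $h_T$ of Lemma~\ref{LEM:Delta_T}; writing $h_T^{Q}$ and $h_T^{Q'}$ for these computed with $Q$ and $Q'$, the composite $g_T:=h_T^{Q'}\circ (h_T^{Q})^{-1}\colon \Delta_T(Q)\to\Delta_T(Q')$ is a homeomorphism sending each $w_{T'}(Q)$ to $w_{T'}(Q')$.

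I would then assemble a global map $\Phi\colon Z_{Q,\alpha}\to Z_{Q',\alpha}$ using the disjoint-union decomposition of Lemma~\ref{LEM:Z-cell-properties}\eqref{ITEM:Z-disjoint-union}: for $z\in\ri(Z_T(Q))=\ri(K_T(Q))\times\ri(\Delta_T(Q))$ write $z=(v,w)$ and set $\Phi(z):=(\Psi(v),g_T(w))$. Well-definedness is immediate because the cell $T$ is unique, and bijectivity follows by building the inverse the same way from $\Psi^{-1}$ and $g_T^{-1}$. The \textbf{main obstacle}, and the only step requiring real work, is \emph{continuity across cell boundaries}: when $T'$ is an edge expansion of $T$, the cell $Z_{T'}(Q)$ is a face of $Z_T(Q)$ by Lemma~\ref{LEM:Z-cell-properties}\eqref{ITEM:Z-boundary}, and $\Phi$ is continuous there exactly when $g_T$ restricted to $\Delta_{T'}(Q)$ agrees with $g_{T'}$. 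This reduces to a purely combinatorial \emph{naturality} of $h_T$ with respect to face inclusions that is independent of $Q$: restricting $h_T$ to the subproduct of simplices $\Delta^{o_{\ve_i}-1}$ corresponding to the face $\Delta_{T'}$ must recover $h_{T'}$ under the $Q$-independent identification of domains. I would verify this by running through the three types of one-edge formal expansion in Definition~\ref{DEF:formal-expansion-and-dim}: changing a direction to $\leftrightarrow$ (which passes to a facet of one simplex factor $\Delta^{o_\ve-1}$ and separates the area distribution accordingly), and the two ways of expanding $S_T$ (which either leave the $\Delta$-factor untouched or add a new directed edge, affecting only $\Psi$), checking in each case that the explicit area-distribution rule defining $h_T$ matches the corresponding rule for $h_{T'}$. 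Granting this, $\Phi$ is a continuous bijection from the compact space $Z_{Q,\alpha}$ to the Hausdorff space $Z_{Q',\alpha}$, hence a homeomorphism, and it is cellular by construction since $\Phi(\ri(Z_T(Q)))=\ri(Z_T(Q'))$.

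For part \eqref{ITEM:alpha^r} I would exploit part \eqref{ITEM:QQ'} to use the \emph{same} polygon $Q$ for both $Z_\alpha$ and $Z_{\alpha^{\circlearrowright r}}$, merely shifting the choice of first vertex and base side by $r$. Cyclic rotation of the external labels is realized geometrically by the relabeling $q_j\mapsto q_{j-r}$ of the vertices of $Q$, which induces the bijection $\tau_r$ on $\mathcal T_\alpha$ (and on triangulations) and a poset isomorphism of edge-expansion posets. Since the star-areas $area(Star_t(j))$ and the subpolygon areas $area(Q_T(i))$ are geometric quantities unchanged by relabeling, the corresponding coordinate permutations $\sigma\colon\R^{n_\alpha}\to\R^{n_\alpha}$ (a cyclic shift of the $e_j$) and $\sigma'\colon\R^{k_\alpha}\to\R^{k_\alpha}$ (the induced permutation of the $e_i$) satisfy $\sigma(v_t)=v_{\tau_r(t)}$ and $\sigma'(w_T)=w_{\tau_r(T)}$. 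Thus $\sigma\times\sigma'$ is a linear isomorphism of $\R^{n_\alpha}\times\R^{k_\alpha}$ carrying vertices to vertices and, by the convex-hull definitions in Definition~\ref{DEF:KT-DeltaT-ZT}, each cell $Z_T$ onto $Z_{\tau_r(T)}$; being linear it is automatically a homeomorphism, which yields the desired cellular homeomorphism $Z_\alpha\to Z_{\alpha^{\circlearrowright r}}$ with no further analytic input.
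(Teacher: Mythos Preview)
Your argument for part \eqref{ITEM:alpha^r} is correct and in fact cleaner than the paper's: once one fixes the same geometric polygon $Q$ for both $\alpha$ and $\alpha^{\circlearrowright r}$, the relabeling really is a linear coordinate permutation $\sigma\times\sigma'$, and linearity gives the cellular homeomorphism for free. The paper instead reuses the skeleton-by-skeleton induction from part \eqref{ITEM:QQ'}, which is unnecessary here.

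However, your construction for part \eqref{ITEM:QQ'} has a genuine gap. The maps $g_T:=h_T^{Q'}\circ(h_T^{Q})^{-1}$ do \emph{not} glue across the face relation in general, and the failure occurs exactly in the situation you pass over too quickly: a one-edge expansion of $S_T$ whose new edge lies on the essential spine and admits \emph{both} directions (case \eqref{expansion-new-leftright} of Definition~\ref{DEF:formal-expansion-and-dim} with neither direction forced). In that case $\Delta_{T'_{\rightarrow}}$ is not a facet of a simplex factor of the domain of $h_T$, but rather a full-dimensional half of $\Delta_T$, cut out by the inequality $x_{i'_1}+\dots+x_{i'_{k'}}\ge area(Q')$ from Lemma~\ref{LEM:Delta_T}. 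Concretely, take $\alpha=(\oo\ii\oo\ii)$ and let $T$ be the corolla; then $h_T^Q$ is the affine parametrization of the interval $\Delta_T(Q)$, so $g_T$ is just the rescaling by $area(Q')/area(Q)$. But $\Delta_{T'_{\rightarrow}}(Q)$ is the half-interval $\{x_1\ge area(P_1)\}$ where $P_1$ is one triangle in the diagonal subdivision of $Q$, and $g_T$ sends this to $\{y_1\ge area(P_1)\cdot area(Q')/area(Q)\}$, which equals $\Delta_{T'_{\rightarrow}}(Q')=\{y_1\ge area(P'_1)\}$ only when the diagonal cuts $Q$ and $Q'$ in the same proportion. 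Since this proportion is not a combinatorial invariant of convex $n$-gons, $g_T$ neither restricts to $g_{T'_{\rightarrow}}$ nor even carries $\Delta_{T'_{\rightarrow}}(Q)$ onto $\Delta_{T'_{\rightarrow}}(Q')$, so your $\Phi$ is discontinuous along these faces and is not cellular. Your parenthetical that this case ``affects only $\Psi$'' is precisely where the argument breaks.

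The paper sidesteps this by abandoning any explicit formula and arguing softly: by induction on the dimension $d$, assume a cellular homeomorphism of $(d-1)$-skeleta has been built; each $d$-cell $Z_T(Q)$ and $Z_T(Q')$ is a closed $d$-ball (Corollary~\ref{COR:ZT=KTxDT}) whose boundary sphere is already matched by the inductive hypothesis via Lemma~\ref{LEM:Z-cell-properties}\eqref{ITEM:Z-boundary}, and any homeomorphism of boundary spheres extends over the balls. This gives a cellular homeomorphism without ever comparing the $h_T$'s for different $T$.
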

\begin{proof}
For \eqref{ITEM:QQ'}, assume by induction, that we have defined maps $Z^{(d-1)}_{Q,\alpha}\to Z^{(d-1)}_{Q',\alpha}$ of the corresponding $(d-1)$-skeleta that restrict to a homeomorphism on each cell labeled by $T\in \mathcal T_\alpha$ of dimension less than $d$. Now, if $T$ labels a cell of dimension $d$, then the cells $Z_{T}(Q)$ and $Z_{T}(Q')$, for $Q$ and $Q'$ respectively, are closed balls of dimension $d$ by Corollary \ref{COR:ZT=KTxDT}, and, by Lemma \ref{LEM:Z-cell-properties}, $\rbd (Z_{T}(Q))=\coprod_{\tiny \begin{array}{cc} \text{$T'$ is edge ex-}\\ \text{pansion of $T$}\end{array}} \ri(K_{T'}(Q))\times \ri(\Delta_{T'}(Q))$ and $\rbd (Z_{Q',T})=\coprod_{\tiny \begin{array}{cc} \text{$T'$ is edge ex-}\\ \text{pansion of $T$}\end{array}} \ri(K_{T'}(Q'))\times \ri(\Delta_{T'}(Q'))$. By induction, we have homeomorphisms between the boundary $(d-1)$-spheres $\rbd(Z_{T}(Q))\to \rbd(Z_{T}(Q'))$, which may thus be extended to a homeomorphism of the $d$-balls $Z_{T}(Q)\to Z_{T}(Q')$.

The argument for \eqref{ITEM:alpha^r} is similar to the one for \eqref{ITEM:QQ'}, since the cyclic rotation $\tau_r:\mathcal T_{\alpha}\to \mathcal T_{\alpha^{\circlearrowright r}}$ given by the numbering of the external vertices respects edge expansions; in other words, the edge expansions of $\tau_r(T)$ are precisely $\tau_r(T')$, for edge expansions $T'$ of $T$. Thus, we can extend homeomorphisms of the $(d-1)$-skeleta $Z^{(d-1)}_\alpha\to Z^{(d-1)}_{\alpha^{\circlearrowright r}}$ to any $d$-cell labeled by $T$ as in \eqref{ITEM:QQ'}.
\end{proof}

Corollary~\ref{COR: INDEPENDENT} justifies referring to $Z_\alpha$ as a cell complex, which we will call the {\em assocoipahedron}, and which depends on an $\alpha$ up to cyclic rotation.

Figures~\ref{FIG:Pairahedra-cell} and~\ref{FIG:Higher-Poly-cell} show these cell structures for certain examples. The highest dimension cells and codimension one cells are labeled in each case. Compare to Figures~\ref{FIG:Pairahedra} and~\ref{FIG:Higher-Poly}.
\begin{figure}[h]
\[
 \includegraphics[scale=1]{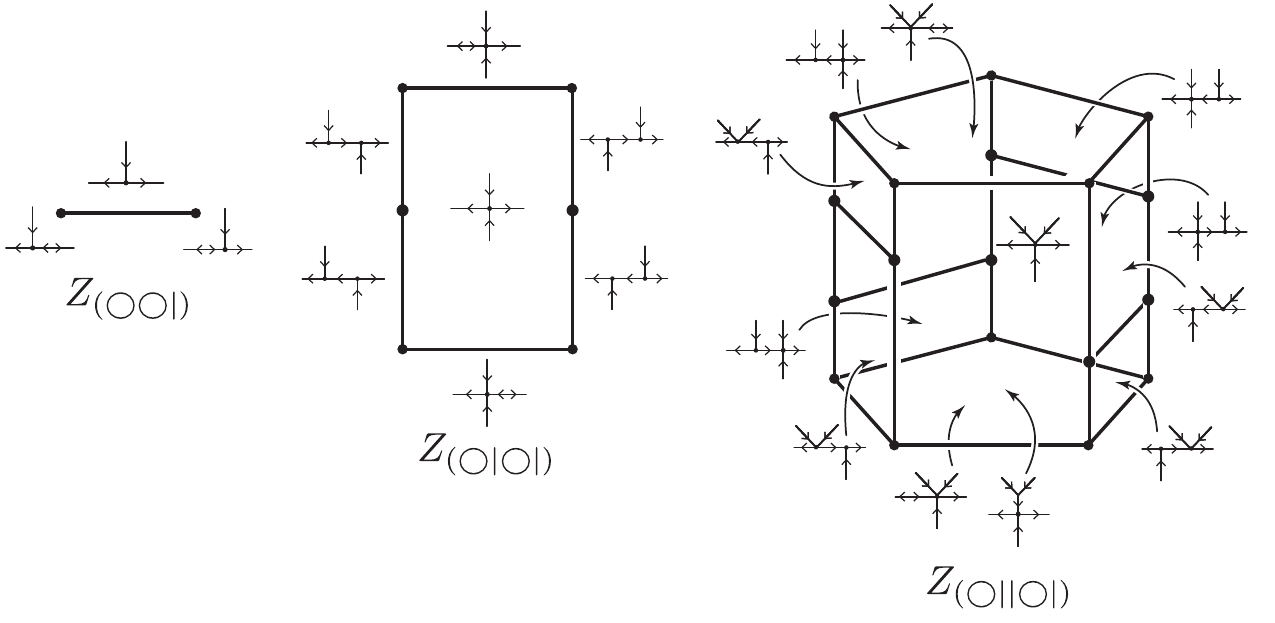}
\]
\caption{The cell complexes $Z_{(\oo\oo\ii)}$, $Z_{(\oo\ii\oo\ii)}$, and $Z_{(\oo\ii\ii\oo\ii)}$ (subdivision of $K_4\times \Delta^1$)}
\label{FIG:Pairahedra-cell}
\end{figure}
\begin{figure}[h]
\[
 \includegraphics[scale=1]{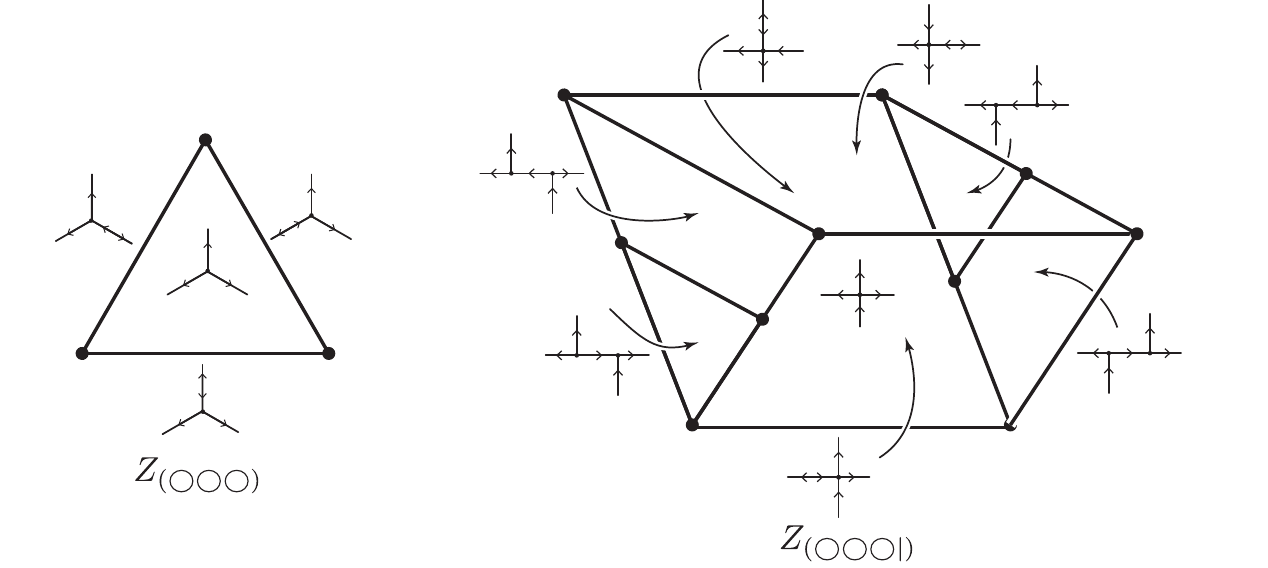}
\]
\caption{The cell complexes $Z_{(\oo\oo\oo)}\cong \Delta^2$ and $Z_{(\oo\oo\oo\ii)}$ (subdivision of $K_3\times \Delta^2$)}
\label{FIG:Higher-Poly-cell}
\end{figure}

We finish this section with a remark on an alternative approach for the construction of $Z_\alpha$.

\begin{rem}\label{REM:Loday-construction-Kalpha}
Although our construction of the assocoipahedron above used the secondary polytope to construct the associahedron, it would also work with other constructions of the associahedron. For example, Loday's construction of the associahedron, cf. \cite{L1}, is given for a maximally expanded Stasheff-type tree $S$ as follows. To each interior vertex $\ve$, associate $x_\ve=a_\ve\cdot b_\ve$ the product of the number of outgoing exterior edges to one side times the number of outgoing exterior edges to the other side. This gives a vector $v_S=\sum_\ve x_\ve e_\ve\in\R^{\#\text{ of interior vertices of $S$}}$ by ordering the internal vertices from left to right, see e.g. \cite[p.4]{L2}. An example is given in Figure \ref{FIG:Loday-construction}. The vectors lie in a hyperplane $\sum_\ve x_\ve=const$. The convex hull of the vectors $v_S$ then gives another representation of the associahedron $K_{n_\alpha-1}$; cf. \cite{L1}.

Now, an essential spine $E$ of a maximally expanded $(S,E)\in \mathcal{SE}_\alpha$ subdivides the Stasheff tree $S$ such that each subtree has exactly one outgoing exterior vertex $\ve_i$, where $i=1,\dots, k_\alpha$. For each such vertex $\ve_i$, we let $x_i$ be the sum of all numbers $x_\ve$ over all vertices $\ve$ that are still connected to $\ve_i$ after this the subdivision coming from $E$. Then, we can define $w_{(S,E)}\in \R^{k_\alpha}$ by setting (cf. Figure\ref{FIG:Loday-construction})
\[
w_{(S,E)}:=\sum_{i=1}^{k_\alpha} x_i e_i.
\]
This will yield an alternative geometric representation
\[
\Delta_\alpha=conv(\{w_{(S,E)}:(S,E)\text{ is maximally expanded}\}),
\]
and with this a space
$
\widetilde{Z}_\alpha:=K_{n_\alpha-1}\times \Delta_\alpha .
$
It is an easy exercise to see that $\Delta_\alpha$ is a $k_\alpha$-simplex and hence 
 $Z_\alpha$ and $\widetilde{Z}_\alpha$ are homeomorphic.
 A full treatment of the cellular structure of $\widetilde{Z}_\alpha$ using this approach requires an analogous definition of the spaces $Z_T$ as above.

\begin{figure}[h]
\[
 \includegraphics[scale=.9]{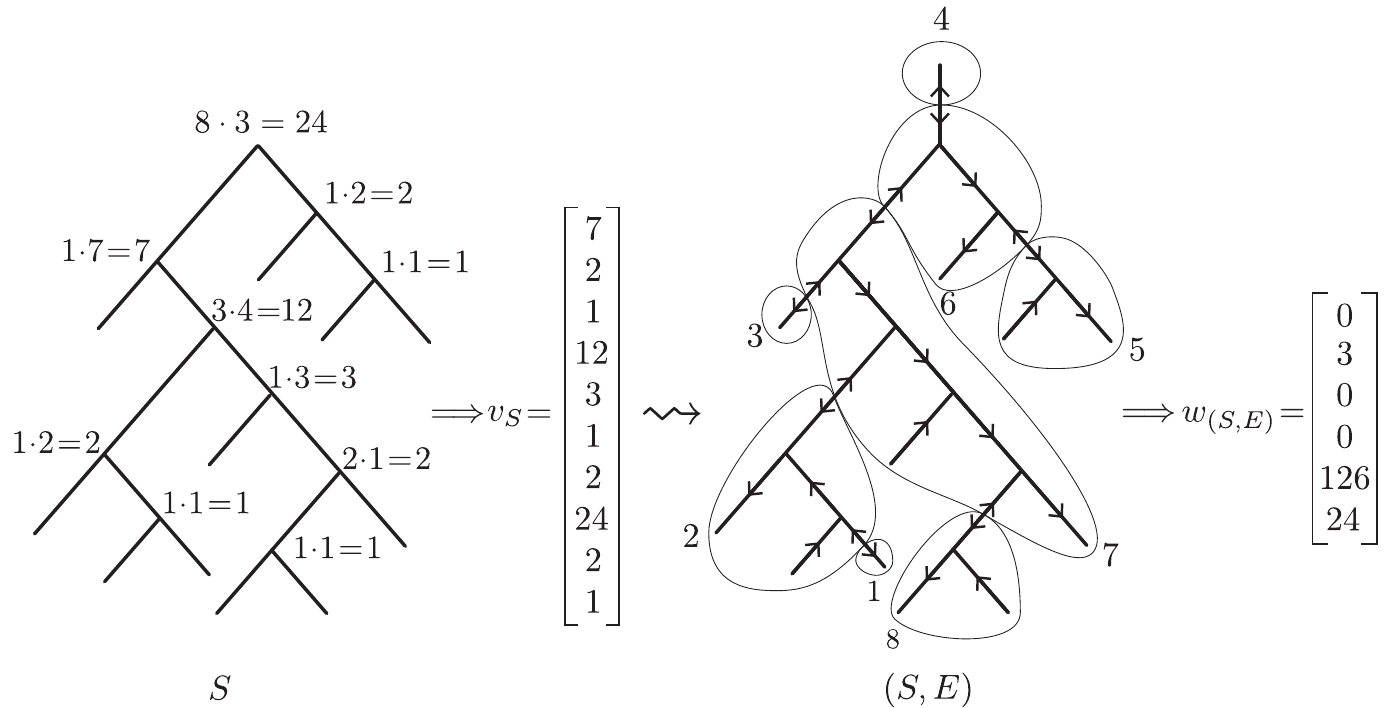}
\]
\caption{Realization of $Z_\alpha$ using Loday's construction}\label{FIG:Loday-construction}
\end{figure}
\end{rem}

\section{Vertices of the Directed Planar Tree Complex}\label{SEC:Vertices}

In this section, we perform some calculations of the number $C(\alpha)$ of vertices of the cell complex $Z_\alpha$. For $\alpha=(\oo\ii\ii\dots\ii\ii)$ with $n=n_\alpha$ labels, only one of which is outgoing $\oo$ and the rest being incoming $\ii$, this number is well known to be the Catalan number $C(\alpha)=C_{n-2}:=\frac 1 {n-1}{2(n-2)\choose n-2}$. For more general $\alpha$, we give a recursive relation for $C(\alpha)$ in Proposition \ref{PROP:C(alpha)-recursion}. We calculate these for the case of $\alpha=(\oo\ii\ii\dots\ii\ii\oo\ii\ii\dots\ii\ii)$ with exactly two outgoing labels $\oo$ in Proposition \ref{PROP:c-lm}.

\begin{defn}
Let $\alpha$ be a sequence of labels $\ii$ or $\oo$. Let $Z_\alpha$ be the cell complex defined in the last section. Then we define $C(\alpha)$ as the number of vertices of the cell complex $Z_\alpha$; i.e. $C(\alpha)$ is the number of maximally expanded $\alpha$-trees.
\end{defn}
The following proposition gives a recursive relation by which we can calculate $C(\alpha)$.

\begin{prop}\label{PROP:C(alpha)-recursion}
$C(\alpha)$ satisfies the following properties.
\begin{enumerate}
\item\label{C-cyclic}
Let $\alpha=(\alpha(1)\dots \alpha(n))$ be a list of labels (where $\alpha(j)\in \{\ii,\oo\}$ for all $j=1,\dots, n$), and denote by $\alpha^{\circlearrowright r}=(\alpha(r+1)\dots \alpha(n)\alpha(1)\dots \alpha(r))$ the cyclic rotation by $0\leq r< n$ symbols as in Corollary \ref{COR: INDEPENDENT}. Then, $C(\alpha^{\circlearrowright r})=C(\alpha)$.
\item\label{C-Catalan}
Let $\alpha=(\oo\ii\ii\dots\ii\ii)$ have a total of $n_\alpha\geq 2$ symbols, only one of which is outgoing. Then $C(\alpha)=C_{n_\alpha-2}$, where $C_n$ is the Catalan number $C_n:=\frac 1 {n+1}{2n\choose n}$.
\item\label{C-OO}
Let $\alpha=(\oo\oo)$. Then $C(\alpha)=1$.
\item\label{C-O+++}
Let $\alpha=(\alpha(1)\alpha(2)\dots \alpha(n))$ be a list of labels (where $\alpha(j)\in\{\ii,\oo\}$ for $j=1,\dots, n$). Assume that $n\geq 3$, that $\alpha(1)=\oo$, and that at least one of the $\alpha(j)$  for $j=2,\dots, n$ is also outgoing, $\alpha(j)=\oo$. Then,
\begin{eqnarray*}
C(\alpha)&=&C(\oo\alpha(2)\dots \alpha(n))\\
&=& C(\ii\alpha(2)\dots \alpha(n))+\sum_{j=2}^{n-1} C(\oo\alpha(2)\dots \alpha(j))\cdot C(\oo\alpha(j+1)\dots \alpha(n)).
\end{eqnarray*}
\end{enumerate}
\end{prop}
\begin{proof}
To see \eqref{C-cyclic}, note that the $0$-cells of $Z_\alpha$ and $Z_{\alpha^{\circlearrowright r}}$ are in bijective correspondence by Corollary \ref{COR: INDEPENDENT}, and thus, $C(\alpha)=C(\alpha^{\circlearrowright r})$.

 For \eqref{C-Catalan}, note that $Z_\alpha=K_{n_\alpha-1}$ (see Example \ref{EX:polyhedra}\eqref{EX:polyhedra-associahedra}), for which the number of vertices are well known to be $C_{n_\alpha-2}=\frac 1 {n_\alpha-1}{2(n_\alpha-2)\choose n_\alpha-2}$. Claim \eqref{C-OO} follows easily from the definition of $Z_{(\oo\oo)}=\{*\}$.

It remains to check claim \eqref{C-O+++}. Assume that $\alpha$ is as stated in  \eqref{C-O+++}, so that in particular, $\alpha(1)=\oo$, i.e. $\alpha=(\oo\alpha(2)\dots \alpha(n))$. Let $T$ be any maximally expanded $\alpha$-tree, and $f(T)=(S_T,E_T)$ as usual. Then all internal vertices of $S_T$ are trivalent. Note, there are two choices of how the ``root'' edge of $S_T$ (which, by definition, is the external edge connected to the outgoing edge of $S_T$) is labeled in $E_T$: either it is labeled by the symbol $\leftrightarrow$, or it is labeled as an outgoing edge.

In the first case, note that the trees $T$ that are possible with a $\leftrightarrow$ label at the root of $S_T$ are in one-to-one correspondence with the trees that have an incoming edge at the root. Thus, the number of maximally expanded such trees is precisely $C(\ii\alpha(2)\dots \alpha(n))$; see Figure \ref{FIG:root=leftright}.
\begin{figure}[h]
\[
 \includegraphics[scale=.9]{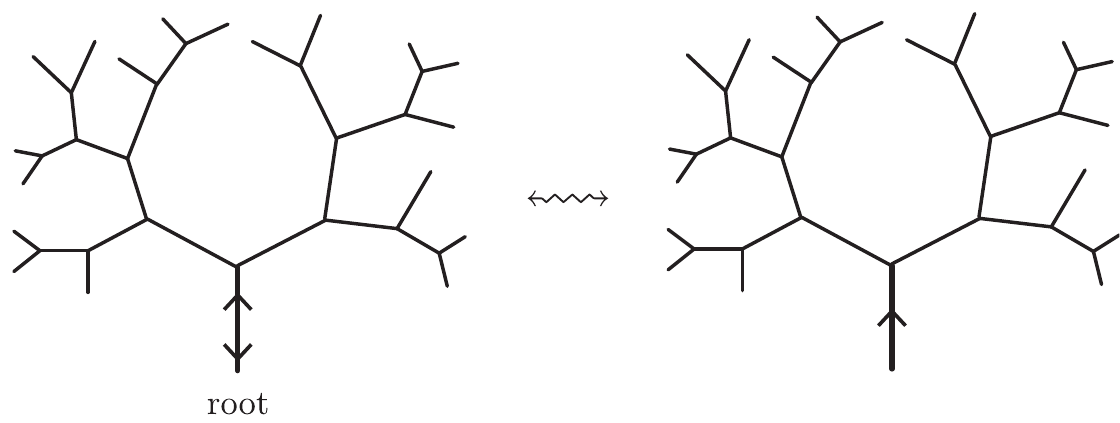}
\]
\caption{Root labeled with $\leftrightarrow$ corresponds to new labeling $(\ii\alpha(2)\dots \alpha(n))$}\label{FIG:root=leftright}
\end{figure}

Now, in the second case, the root of $S_T$ is labeled with the outgoing direction in $E_T$. 
Since $T$ is maximally expanded, there are exactly two edges that share a vertex with the root edge of $S_T$ and these edges must be directed toward this vertex in $E_T$.
Deleting the root edge from $T$ yields two new trees $T_1$ and $T_2$; see Figure \ref{FIG:root=outgoing}.
If there are $j$ external vertices in $T_1$ and $n-j+1$ external vertices in $T_2$, then those subtrees correspond exactly to the subtrees with labels $(\oo\alpha(2)\dots\alpha(j))$ and $(\oo\alpha(j+1)\dots\alpha(n))$ of which there are precisely $C(\oo\alpha(2)\dots\alpha(j))\cdot C(\oo\alpha(j+1)\dots\alpha(n))$ many.
\begin{figure}[h]
\[
 \includegraphics[scale=.9]{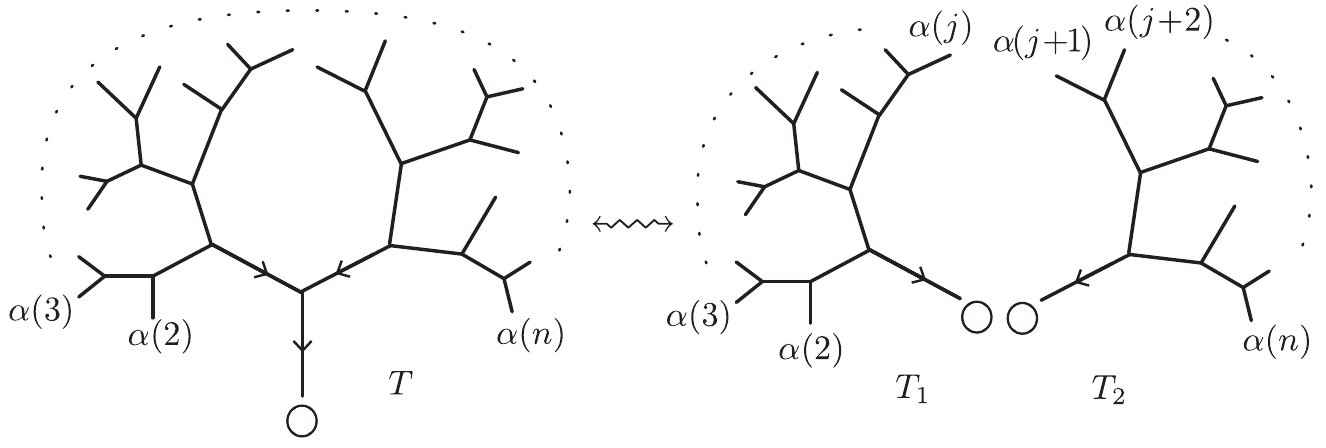}
\]
\caption{A root labeled with outgoing labels broken into a $(\oo\alpha(2)\dots\alpha(j))$-tree and a $(\oo\alpha(j+1)\dots\alpha(n))$-tree}\label{FIG:root=outgoing}
\end{figure}

Adding these two choices together gives precisely the claimed number in \eqref{C-O+++}.
\end{proof}

Since $C(\alpha)$ is cyclically invariant in $\alpha$ (Proposition \ref{PROP:C(alpha)-recursion}\eqref{C-cyclic}), it is determined by a sequence of $k_\alpha$ numbers $\ell_1,\dots,\ell_{k_\alpha}$, where $\ell_i\geq 0$ is the number of incoming labels between the $i$th and the $(i+1)$th outgoing labels; i.e.
\[
\alpha=(\oo\underbrace{\ii\ii\dots\ii\ii}_{\ell_1}\oo\underbrace{\ii\ii\dots\ii\ii}_{\ell_2}\dots \oo\underbrace{\ii\ii\dots\ii\ii}_{\ell_{k_\alpha}}).
\]
For these $\ell_i$, we call $c_{\ell_1,\dots,\ell_{k_\alpha}}:=C(\alpha)$ the generalized Catalan numbers. We clearly have that $c_{\ell_1,\dots,\ell_{k_\alpha}}=c_{\ell_{k_\alpha},\ell_1,\dots,\ell_{k_\alpha -1}}$. The precise relation with the Catalan numbers $C_n$ is given by
\[
c_\ell = C\Big((\oo\underbrace{\ii\ii\dots\ii\ii}_{\ell})\Big)=C_{\ell-1}=\frac{1}{\ell}{2(\ell-1) \choose \ell-1}.
\]

For $k_\alpha=2$, we will show below that $c_{\ell,m}$ is given by the following formula.
\begin{prop}\label{PROP:c-lm} We have:
\begin{eqnarray*}
c_{\ell,m}&=&c_{\ell+2}\cdot c_{m+2}\cdot \frac{(\ell+1)(\ell+2)(m+1)(m+2)}{2(\ell+m+1)(\ell+m+2)}\\
&=& {2(\ell+1)\choose \ell+1} {2(m+1)\choose m+1}\cdot \frac{(\ell+1)(m+1)}{2(\ell+m+1)(\ell+m+2)}
\end{eqnarray*}
\end{prop}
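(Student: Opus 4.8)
The plan is to feed the specialization $\alpha=(\oo\ii^\ell\oo\ii^m)$ into the recursion of Proposition~\ref{PROP:C(alpha)-recursion} and solve the resulting recurrence with generating functions. Write $C(x)=\sum_{n\ge0}C_nx^n$ for the Catalan series, $A(x)=xC(x)=\tfrac12\bigl(1-\sqrt{1-4x}\bigr)$, and $F(x,y)=\sum_{\ell,m\ge0}c_{\ell,m}x^\ell y^m$. First I would apply part~\eqref{C-O+++} to $\alpha=(\oo\ii^\ell\oo\ii^m)$. The first term $C(\ii\ii^\ell\oo\ii^m)$ has a single outgoing label, so by cyclic invariance~\eqref{C-cyclic} and~\eqref{C-Catalan} it equals $C_{\ell+m}$. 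In the splitting sum, a cut made before the second outgoing label produces a single-outgoing left factor $c_{j-1}=C_{j-2}$ together with a two-outgoing right factor $c_{\ell+1-j,m}$, while a cut at or after it produces a two-outgoing left factor $c_{\ell,j-\ell-2}$ and a single-outgoing right factor. After reindexing this yields
\[
c_{\ell,m}=C_{\ell+m}+\sum_{i=1}^{\ell}c_i\,c_{\ell-i,m}+\sum_{i=0}^{m-1}c_{\ell,i}\,c_{m-i},\qquad \ell+m\ge1,
\]
with base value $c_{0,0}=1$ and $c_i=C_{i-1}$.

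\textbf{Passing to generating functions.} Next I would multiply by $x^\ell y^m$ and sum over $\ell+m\ge1$. The two convolutions assemble into $\bigl(A(x)+A(y)\bigr)F(x,y)$, and the inhomogeneous term sums to $\frac{A(x)-A(y)}{x-y}-1$, where the divided difference $\frac{A(x)-A(y)}{x-y}=\sum_{\ell,m}C_{\ell+m}x^\ell y^m$. Combining these with the base value $c_{0,0}=1$, the constant terms cancel and the functional equation reduces to
\[
F(x,y)\,\bigl(1-A(x)-A(y)\bigr)=\frac{A(x)-A(y)}{x-y}.
\]
Since the Catalan identity $C=1+xC^2$ is equivalent to $x=A(x)-A(x)^2$, one checks that the right-hand side equals $\bigl(1-A(x)-A(y)\bigr)^{-1}$, and I obtain the clean closed form
\[
F(x,y)=\frac{1}{\bigl(1-A(x)-A(y)\bigr)^2}=\frac{4}{\bigl(\sqrt{1-4x}+\sqrt{1-4y}\bigr)^2}.
\]

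\textbf{Coefficient extraction, and the main obstacle.} Finally I would extract $c_{\ell,m}=[x^\ell y^m]F$. Setting $G:=\frac{1}{1-A(x)-A(y)}=\frac{A(x)-A(y)}{x-y}$, the divided-difference form shows its coefficients are simply $[x^\ell y^m]G=C_{\ell+m}$. Since $\partial_xA(x)=1/\sqrt{1-4x}$, differentiation gives $\partial_xG=A'(x)\,F=F/\sqrt{1-4x}$, hence $F=\sqrt{1-4x}\,\partial_xG$, which reduces the bivariate extraction to a single convolution in $x$:
\[
c_{\ell,m}=\sum_{q=0}^{\ell}a_{\ell-q}\,(q+1)\,C_{q+m+1},
\qquad a_p=[x^p]\sqrt{1-4x}=-\tfrac{1}{2p-1}\tbinom{2p}{p}\ (p\ge1),\quad a_0=1.
\]
The main obstacle is this last step: evaluating this one-variable hypergeometric sum of central binomial coefficients against Catalan numbers down to $\binom{2\ell+2}{\ell+1}\binom{2m+2}{m+1}\frac{(\ell+1)(m+1)}{2(\ell+m+1)(\ell+m+2)}$, a delicate but routine Vandermonde/creative-telescoping computation. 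The equivalence of the two expressions in the statement then follows immediately from $\binom{2\ell+2}{\ell+1}=(\ell+2)C_{\ell+1}$. As a variant that bypasses the extraction, one could instead guess the formula from $F$ and verify by induction on $\ell+m$ that it satisfies the recurrence above, which again reduces to standard Catalan-convolution identities.
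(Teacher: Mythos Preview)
Your argument is correct and takes a genuinely different route from the paper. The paper does \emph{not} feed the general recursion of Proposition~\ref{PROP:C(alpha)-recursion} into generating functions; instead it sets up a separate bijective identity specific to $k_\alpha=2$: observing that a maximally expanded $(\oo\ii^\ell\oo\ii^m)$-tree has a unique $\leftrightarrow$-edge on its essential spine, it grafts a new outgoing root there to obtain a Stasheff-type tree with $\ell+m+3$ leaves. This over-counts, and subtracting the bad cases yields
\[
c_{\ell,m}=(m+1)\,c_{\ell+m+2}-\sum_{i=0}^m\Bigl(\sum_{j=1}^i+\sum_{j=1}^{m-i}\Bigr)c_j\,c_{\ell+m+2-j}.
\]
The partial Catalan convolutions on the right are then evaluated via a dedicated Lemma~\ref{LEM:Catalan-induction} (the identity $\sum_{j=0}^p C_jC_{N-j}=\tfrac12(C_{N+1}+b_{N-p,p}-b_{N-p-1,p+1})$), and the proof finishes by induction on $m$. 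Your approach is more systematic and arguably cleaner: it uses only the recursion already on the table, and the solved functional equation $F(x,y)=4\big/(\sqrt{1-4x}+\sqrt{1-4y})^2$ is an attractive closed form that makes the $\ell\leftrightarrow m$ symmetry manifest and could in principle be pushed to higher $k_\alpha$. The price is that your last step --- the hypergeometric sum $\sum_{q} a_{\ell-q}(q+1)C_{q+m+1}$ --- is left as ``routine''; it is indeed mechanical via Gosper/Zeilberger, but you should either carry it out or execute the induction you sketch, since that is precisely where the specific binomial form emerges. In the paper the equivalent algebraic work is done explicitly inside Lemma~\ref{LEM:Catalan-induction}.
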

Before we can prove Proposition \ref{PROP:c-lm}, we first need to prove the basic Lemma \ref{LEM:Catalan-induction}. Let $b_{\ell,m}$ denote the numbers from the proposition, i.e. let 
\[
b_{\ell,m}:={2(\ell+1)\choose \ell+1} {2(m+1)\choose m+1}\cdot \frac{(\ell+1)(m+1)}{2(\ell+m+1)(\ell+m+2)}, \text{ for } \ell,m\geq 0.
\]
The claim of Proposition \ref{PROP:c-lm} is that $c_{\ell,m}=b_{\ell,m}$. It is immediate to check that in low cases we have:
\begin{equation}\label{EQU:b-low-cases}
 b_{\ell,0}=C_{\ell+1}, \quad \text{ and }\quad b_{\ell,1}={2(\ell+1) \choose \ell+1}\frac{6(\ell+1)}{(\ell+2)(\ell+3)}=2(C_{\ell+2}-C_{\ell+1}).
 \end{equation}
The numbers $b_{\ell,m}$ are closely related to the Catalan numbers, as the following lemma shows.
\begin{lem}\label{LEM:Catalan-induction}
For $p=0,\dots, N-1$, we have:
\begin{equation}\label{EQU:partial-catalan}
\sum_{j=0}^p C_j C_{N-j} = \frac 1 2\big(C_{N+1}+b_{N-p,p}-b_{N-p-1,p+1}\big).
\end{equation}
\end{lem}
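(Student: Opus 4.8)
The plan is to prove \eqref{EQU:partial-catalan} by induction on $p$, with the base case $p=0$ settled by the explicit values in \eqref{EQU:b-low-cases} and the inductive step reduced to a single closed-form identity among the Catalan numbers and the $b_{\ell,m}$. For the base case, the left side is $C_0 C_N = C_N$, while the right side is $\tfrac12\big(C_{N+1} + b_{N,0} - b_{N-1,1}\big)$; substituting $b_{N,0}=C_{N+1}$ and $b_{N-1,1}=2(C_{N+1}-C_N)$ from \eqref{EQU:b-low-cases} collapses the right side to $C_N$, so the two sides agree.

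For the inductive step I would subtract the instance of \eqref{EQU:partial-catalan} at $p$ from the instance at $p+1$. The left side then contributes only the new summand $C_{p+1}C_{N-p-1}$, while the right side becomes $\tfrac12\big(2b_{N-p-1,p+1}-b_{N-p-2,p+2}-b_{N-p,p}\big)$. Writing $\ell=N-p-1$ and $m=p+1$ (so that $\ell+m=N$), the step is therefore equivalent to the symmetric identity
\[
2\,C_\ell C_m = 2\,b_{\ell,m} - b_{\ell-1,m+1} - b_{\ell+1,m-1}.
\]
The crucial observation that makes this tractable is that the three terms $b_{\ell,m}$, $b_{\ell-1,m+1}$, $b_{\ell+1,m-1}$ all have the same index sum $\ell+m=N$, so by the definition of $b$ they share the common denominator $2(N+1)(N+2)$; only their numerators and their central binomial factors differ.

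To finish, I would pass to the central binomial coefficients $\beta_\ell:=\binom{2(\ell+1)}{\ell+1}$, for which $b_{\ell,m}=\beta_\ell\beta_m\cdot\frac{(\ell+1)(m+1)}{2(\ell+m+1)(\ell+m+2)}$ and $C_\ell=\frac{\beta_\ell}{2(2\ell+1)}$, and use the ratio $\beta_{\ell+1}/\beta_\ell=\frac{2(2\ell+3)}{\ell+2}$ to rewrite $\beta_{\ell-1}\beta_{m+1}$ and $\beta_{\ell+1}\beta_{m-1}$ as rational multiples of $\beta_\ell\beta_m$. After cancelling the common factor $\beta_\ell\beta_m$ and clearing denominators, the displayed identity reduces to the purely polynomial statement
\begin{multline*}
(\ell+m+1)(\ell+m+2) = 2(\ell+1)(m+1)(2\ell+1)(2m+1) \\ - \ell(\ell+1)(2m+1)(2m+3) - m(m+1)(2\ell+1)(2\ell+3),
\end{multline*}
which I would verify by direct expansion: the right side simplifies to $(\ell+m)^2+3(\ell+m)+2=(\ell+m+1)(\ell+m+2)$.

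I expect the main obstacle to be purely bookkeeping: correctly recording the common-denominator reduction together with the binomial ratios, so that the whole inductive step collapses onto this one polynomial identity. Once the reduction is set up, the expansion is routine. I would also note that the index ranges stay consistent, since in the step from $p$ to $p+1$ (with $p+1\le N-1$) one has $\ell=N-p-1\ge 1$ and $m=p+1\ge 1$, so every $b$-term that appears is well defined.
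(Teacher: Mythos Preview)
Your proposal is correct and follows essentially the same approach as the paper: induction on $p$, with the base case handled via \eqref{EQU:b-low-cases} and the inductive step reduced to the single identity $2C_\ell C_m = 2b_{\ell,m}-b_{\ell-1,m+1}-b_{\ell+1,m-1}$ (the paper writes this as $b_{N-p+1,p-1}+b_{N-p-1,p+1}+2C_pC_{N-p}=2b_{N-p,p}$ and verifies it by a direct, somewhat lengthy computation). Your reduction to the polynomial identity via the $\beta_\ell$ ratios is a slightly cleaner way to package that same verification.
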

\begin{proof}
We start with $p=0$. Since $C_0=1$ and, by \eqref{EQU:b-low-cases}, $b_{N,0}=C_{N+1}$ and $b_{N-1,1}=2(C_{N+1}-C_N)$, this shows that $C_0C_N=\frac 1 2 (C_{N+1}+b_{N,0}-b_{N-1,1})$ is correct.

Next, if \eqref{EQU:partial-catalan} is true for $p-1$, then, for $p$, we get
\[
\sum_{j=0}^p C_jC_{N-j}=\sum_{j=0}^{p-1} C_jC_{N-j}+C_pC_{N-p}=\frac 1 2\big(C_{N+1}+b_{N-p+1,p-1}-b_{N-p,p}\big)+C_pC_{N-p}.
\]
Thus the claim follows if we can show that $b_{N-p+1,p-1}-b_{N-p,p}+2C_pC_{N-p}=b_{N-p,p}-b_{N-p-1,p+1}$, or $b_{N-p+1,p-1}+b_{N-p-1,p+1}+2C_pC_{N-p}=2b_{N-p,p}$. A straightforward (but a bit lengthy) calculation of the left-hand side shows that
\begin{eqnarray*}
&&b_{N-p+1,p-1}+b_{N-p-1,p+1}+2C_pC_{N-p}\\
&=&{2(N-p+2)\choose N-p+2}{2p\choose p}\frac{(N-p+2)p}{2(N+1)(N+2)}\\
&&\hspace{1in} +{2(N-p)\choose N-p}{2(p+2)\choose p+2}\frac{(N-p)(p+2)}{2(N+1)(N+2)}\\
&&\hspace{1in} +2\cdot {2p\choose p}\frac{1}{p+1}\cdot {2(N-p)\choose N-p}\frac{1}{N-p+1}\\
&=&\frac{{2(N-p+1)\choose N-p+1}{2(p+1)\choose p+1}}{(N+1)(N+2)}\cdot \Bigg(\frac{(2N-2p+3)(p+1)p}{2(2p+1)}\\
&&\hspace{1in} +\frac{(N-p+1)(2p+3)(N-p)}{2(2N-2p+1)}+\frac{(N+1)(N+2)}{2(2p+1)(2N-2p+1)}\Bigg)\\
&=&\frac{{2(N-p+1)\choose N-p+1}{2(p+1)\choose p+1}}{(N+1)(N+2)}\cdot (N-p+1)(p+1),
\end{eqnarray*}
which is indeed $2\cdot b_{N-p,p}$.
\end{proof}
Note, that the previous lemma gives an inductive proof of the usual recursive relation for the Catalan numbers:
\begin{cor}
\[
\sum_{j=0}^N C_j C_{N-j} = C_{N+1}.
\]
\end{cor}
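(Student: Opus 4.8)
The plan is to obtain the corollary as the ``$p=N$'' endpoint of Lemma~\ref{LEM:Catalan-induction}, which is established there only for $p\le N-1$. I would first dispose of $N=0$ by hand (the identity reads $C_0C_0=1=C_1$) and assume $N\ge 1$ thereafter. Writing $S_p:=\sum_{j=0}^p C_jC_{N-j}$ for the partial sums, the goal is to evaluate $S_N$.

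The essential observation is that the convolution $\sum_{j=0}^N C_jC_{N-j}$ is symmetric under $j\mapsto N-j$. Reindexing the tail by $i=N-j$ gives $\sum_{j=p+1}^N C_jC_{N-j}=\sum_{i=0}^{N-p-1}C_iC_{N-i}=S_{N-p-1}$, so that $S_N=S_p+S_{N-p-1}$ for every $p\in\{0,\dots,N-1\}$. Both $S_p$ and $S_{N-p-1}$ now carry in-range indices, so I would apply Lemma~\ref{LEM:Catalan-induction} to each:
\[
S_p=\frac12\big(C_{N+1}+b_{N-p,p}-b_{N-p-1,p+1}\big),\qquad S_{N-p-1}=\frac12\big(C_{N+1}+b_{p+1,N-p-1}-b_{p,N-p}\big).
\]

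The step I expect to be the crux --- though it is short --- is noticing that $b_{\ell,m}$ is manifestly symmetric in its two indices, $b_{\ell,m}=b_{m,\ell}$, directly from its defining formula. With this, $b_{p+1,N-p-1}=b_{N-p-1,p+1}$ and $b_{p,N-p}=b_{N-p,p}$, so upon adding the two displayed expressions all four boundary terms cancel in pairs, leaving $S_N=\frac12(C_{N+1}+C_{N+1})=C_{N+1}$. The only real care needed is the bookkeeping: the lemma must be invoked at the two \emph{complementary} indices $p$ and $N-p-1$, and it is precisely the symmetry of $b$ that forces the error terms to vanish. As a cross-check (and an alternative route avoiding the symmetric split), one may instead take $p=N-1$ in the lemma and add the single remaining term $C_NC_0=C_N$, using the low-case values $b_{N,0}=C_{N+1}$ and $b_{N-1,1}=2(C_{N+1}-C_N)$ from~\eqref{EQU:b-low-cases}; this gives $S_{N-1}=C_{N+1}-C_N$ and hence again $S_N=C_{N+1}$.
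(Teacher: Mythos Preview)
Your proposal is correct. The cross-check you append at the end---taking $p=N-1$ in the lemma, adding the single leftover term $C_NC_0=C_N$, and using the boundary values $b_{N,0}=C_{N+1}$ and $b_{N-1,1}=2(C_{N+1}-C_N)$---is exactly the paper's own proof. Your \emph{primary} argument, however, is a genuinely different and rather elegant route: splitting $S_N=S_p+S_{N-p-1}$ via the palindromic symmetry of the convolution, applying the lemma at the two complementary indices, and then cancelling the $b$-terms pairwise using the evident symmetry $b_{\ell,m}=b_{m,\ell}$. The paper's approach needs the specific numerical evaluations from~\eqref{EQU:b-low-cases}, whereas your symmetric split avoids those entirely and works for any choice of $p\in\{0,\dots,N-1\}$; on the other hand, the paper's version is a one-line substitution once those boundary values are in hand.
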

\begin{proof}
Set $p=N-1$ in \eqref{EQU:partial-catalan}. As before, using \eqref{EQU:b-low-cases}, we have that $b_{0,N}=C_{N+1}$ and $b_{1,N-1}=2(C_{N+1}-C_N)$. Thus, from \eqref{EQU:partial-catalan} in Lemma \ref{LEM:Catalan-induction}, we get:
\begin{equation*}
\sum_{j=0}^N C_jC_{N-j}=\sum_{j=0}^{N-1} C_jC_{N-j}+C_N=\frac 1 2 (C_{N+1}+b_{1,N-1}-b_{0,N})+C_N=C_{N+1}.
\end{equation*}
This concludes the proof.
\end{proof}
We are now ready to prove Proposition \ref{PROP:c-lm}.
\begin{proof}[Proof of Proposition \ref{PROP:c-lm}]
For the proof, we will not use the recursive relation for $c_{\ell,m}$ from Proposition \ref{PROP:C(alpha)-recursion}, but we will first give a different recursive relation, which works only for $\alpha$ with two outgoing labels $k_\alpha=2$, but has the advantage that we will be able to solve it explicitly.

Let $T$ be a maximally expanded $\alpha$-tree, with $\alpha=(\oo\underbrace{\ii\ii\dots\ii\ii}_{\ell}\oo\underbrace{\ii\ii\dots\ii\ii}_{m})$. Then, $E_T$ has exactly one label $\leftrightarrow$, which must separate the two outgoing edges as in Figure \ref{FIG:alpha(l,m)}, where we have placed the starting outgoing label to the far left.
\begin{figure}[h]
\[
 \includegraphics[scale=1]{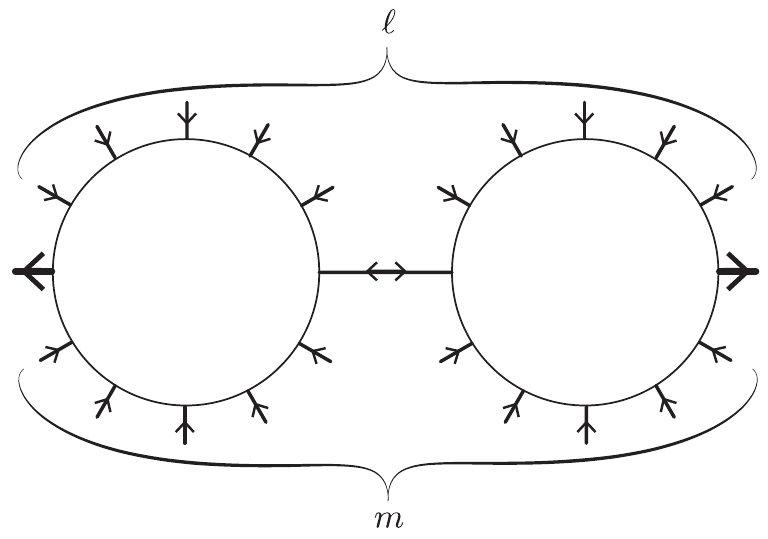}
\]
\caption{$\alpha$-tree $T$ with exactly one label $\leftrightarrow$ on $E_T$. The circles represent trees with only trivalent internal vertices and with directions given by a unique outward flow. There are $\ell$ incoming edges from the top, and $m$ incoming edges from the bottom.}\label{FIG:alpha(l,m)}
\end{figure}

We can convert this tree to a Stasheff-type tree with two more inputs as follows. Connect an edge at the edge labeled with $\leftrightarrow$, and make it the unique outgoing edge. We obtain a tree as depicted in Figure \ref{FIG:alpha(l,m)-->alpha(l+m+3)}.
\begin{figure}[h]
\[
 \includegraphics{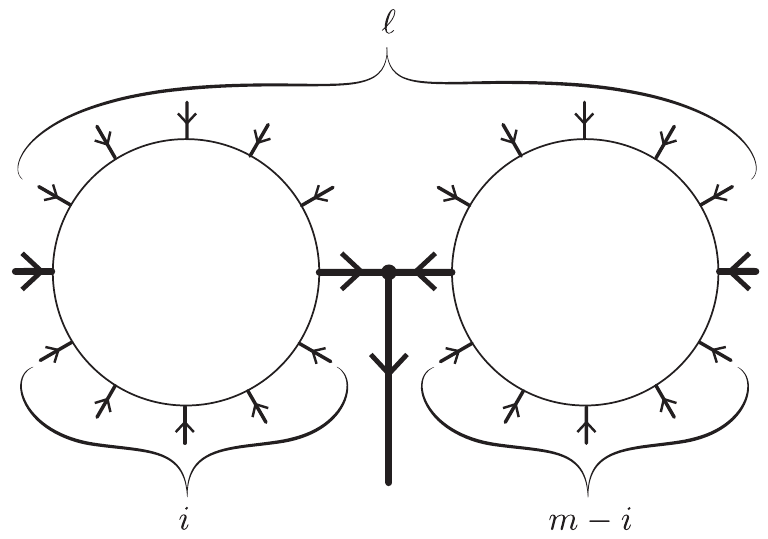}
\]
\caption{A new Stasheff-type tree obtained from the $\alpha$-tree by adding one more external edge, which will be marked as an outgoing edge}\label{FIG:alpha(l,m)-->alpha(l+m+3)}
\end{figure}

The new edge may appear at any position $i=1,\dots, m$ starting from the leftmost edge (which was the chosen first outgoing in $T$). Note, that in this manner, we obtain a tree with $\ell+m+2$ incoming edges, of which there are exactly $c_{\ell+m+2}$ many. However,  this over-counts the number of trees we are interested in. Some of the trees that we counted in $c_{\ell+m+2}$ do not appear as the modification of trees with exactly two outputs described above. These are the ones depicted in Figure \ref{FIG:alpha(l,m)-->not-in-alpha(l+m+3)}, where the corresponding edge labeled $\leftrightarrow$ would not divide the two outgoing edges of the original $\alpha$-tree $T$.
\begin{figure}[h]
\[
 \includegraphics[scale=1]{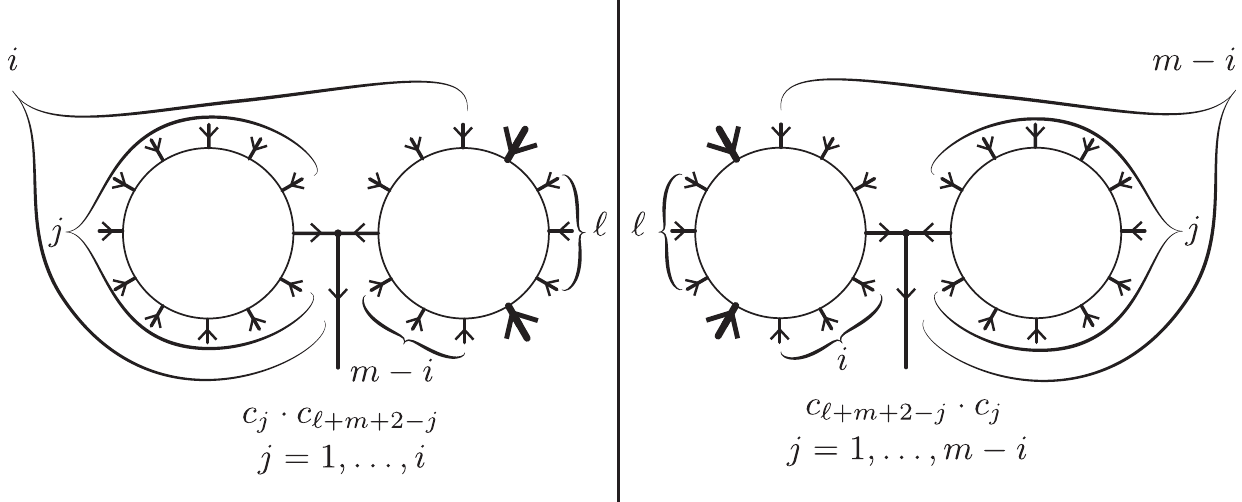}
\]
\caption{Stasheff-type trees that will not appear by adding a new outgoing edge. In these cases the original outgoing edges of $T$ are not separated by the $\leftrightarrow$-labeled edge. There are $j$ incoming edges on one side in either case, where either $j=1,\dots, i$ on the left, or $j=1,\dots,m-i$ on the right.}\label{FIG:alpha(l,m)-->not-in-alpha(l+m+3)}
\end{figure}

The possibilities depicted on the left in Figure \ref{FIG:alpha(l,m)-->not-in-alpha(l+m+3)} can be counted as $c_{j}\cdot c_{\ell+m+2-j}$, where $j=1,\dots, i$. The possibilities depicted on the right in Figure \ref{FIG:alpha(l,m)-->not-in-alpha(l+m+3)} can be counted as $c_{\ell+m+2-j}\cdot c_{j}$, where $j=1,\dots m-i$. We thus obtain a total number of maximally expanded $\alpha$-trees to be:
\begin{eqnarray*}
c_{\ell,m}&=&\sum_{i=0}^m\Big(c_{\ell+m+2}-\sum_{j=1}^i c_{j}\cdot c_{\ell+m+2-j}-\sum_{j=1}^{m-i} c_{\ell+m+2-j}\cdot c_{j}\Big)\\
&=&(m+1)c_{\ell+m+2}-\sum_{i=0}^m\Big( \sum_{j=1}^i c_{j}\cdot c_{\ell+m+2-j}+ \sum_{j=1}^{m-i} c_{\ell+m+2-j}\cdot c_{j}\Big).
\end{eqnarray*}
In particular, for $m=0$, we get $c_{\ell,0}=c_{\ell+2}=b_{\ell,0}$ for all $\ell\geq 0$. The claim of the proposition, namely that $c_{\ell,m}=b_{\ell,m}$ for all $m\geq 0$, thus follows from the following claim \eqref{EQ:induction-proof-step}:
\begin{equation}\label{EQ:induction-proof-step}
\forall m\geq 1, \ell\geq 0: \quad \sum_{i=0}^m\Big( \sum_{j=1}^i c_{j}\cdot c_{\ell+m+2-j}+ \sum_{j=1}^{m-i} c_{\ell+m+2-j}\cdot c_{j}\Big)= (m+1)c_{\ell+m+2}-b_{\ell,m}
\end{equation}
We prove \eqref{EQ:induction-proof-step} by induction on $m$. For $m=1$ and any $\ell\geq 0$, the left-hand side of \eqref{EQ:induction-proof-step} becomes $(0+c_{\ell+1+2-1}c_1)+(c_1c_{\ell+1+2-1}+0)=2c_{\ell+2}$. Noting that $b_{\ell,1}=2(C_{\ell+2}-C_{\ell+1})=2(c_{\ell+3}-c_{\ell+2})$, we see that the right-hand side is also $2c_{\ell+2}$.

Assume now that \eqref{EQ:induction-proof-step} holds for $m-1$ and any $\ell\geq 0$. Then the left hand side of \eqref{EQ:induction-proof-step} can be evaluated as
\begin{eqnarray*}
&& \sum_{i=0}^{m-1}\Big( \sum_{j=1}^i c_{j}\cdot c_{\ell+m+2-j}+ \sum_{j=1}^{m-1-i} c_{\ell+m+2-j}\cdot c_{j}\Big)\\
&&\hspace{.7in}+\Big(\sum_{j=1}^m c_{j}\cdot c_{\ell+m+2-j}\Big)+\Big(\sum_{i=0}^{m-1}  c_{\ell+m+2-(m-i)}\cdot c_{m-i}\Big)\\
& =&(m\cdot c_{\ell+m+2}-b_{\ell+1,m-1})+2\Big(\sum_{j=1}^{m} c_j c_{\ell+m+2-j}\Big)\\
& =& m\cdot c_{\ell+m+2}-b_{\ell+1,m-1}+2\Big(\sum_{j=0}^{m-1} C_{j} C_{\ell+m-j}\Big)\\
&\stackrel {\eqref{EQU:partial-catalan}} =& m\cdot c_{\ell+m+2}-b_{\ell+1,m-1}+(C_{\ell+m+1}+b_{\ell+1,m-1}-b_{\ell,m})\\
&=& (m+1)\cdot c_{\ell+m+2}-b_{\ell,m}.
\end{eqnarray*}
This proves the inductive step and thus proves the claim \eqref{EQ:induction-proof-step} for all $m$ and $\ell$.
\end{proof}

\begin{rem}
It would be interesting to have a formula for $c_{\ell_1,\ell_2,\ell_3}$ similar to the one for $c_{\ell_1,\ell_2}$ in Proposition \ref{PROP:c-lm}. Preliminary computations in this direction (yielding e.g. $c_{\ell,1,0}=c_{\ell+2}\frac{(\ell+1)12(7\ell^2+38\ell+50)}{(\ell+3)(\ell+4)(\ell+5)}$) indicate, that such a formula is more intricate than the one in Proposition \ref{PROP:c-lm}.

In fact, more than this---a closed formula for \emph{all} $c_{\ell_1,\dots,\ell_k}$---would be of interest.
\end{rem}


\begin{thebibliography}{99999}
\bibitem[CSZ]{CSZ}
C. Ceballos, F. Santos, G. Ziegler. \emph{Many non-equivalent realizations of the associahedron}. arxiv:1109.5544v2
\bibitem[CS]{CS} Moira Chas, Dennis Sullivan. \emph{String Topology}. preprint arXiv:math/9911159
\bibitem[DPR]{DPR} Gabirel Drummond-Cole, Kate Poirier, Nathaniel Rounds \emph{Chain-Level String Topology Operations}. preprint 	arXiv:1506.02596
\bibitem[GKZ]{GKZ}
I. M. Gelfand, M. M. Kapranov, A. V. Zelevinsky. \emph{Discriminants, Resultants, and
Multidimensional Determinants,}. Birkha\"user, Boston 1994
\bibitem[L1]{L1} Jean-Louis Loday, \emph{Realization of the Stasheff polytope}. Arch. Math. 83(3) (2004), p. 267--278
\bibitem[L2]{L2}  Jean-Louis Loday. \emph{The diagonal of the Stasheff polytope}. Higher Structures in Geometry and Physics, Progress in Mathematics 287 (2010), Birkh\"auser, p. 269--292
\bibitem[MSS]{MSS} M. Markl, S. Shnider, J. D. Stasheff. \emph{Operads in Algebra, Topology and Physics}. Vol. 96 of Mathematical Surveys and Monographs. American Mathematical Society, Providence, Rhode Island, 2002
\bibitem[S1]{S1} Jim Stasheff. \emph{Homotopy Associativity of H-spaces I}.  Trans. Amer. Math. Soc. 108 (1963), 275--292.
\bibitem[S2]{S2} Jim Stasheff. \emph{Homotopy Associativity of H-spaces II}. Trans. Amer. Math. Soc. 108 (1963), 293--312.
\bibitem[T]{T} Thomas Tradler. \emph{Infinity Inner Products on A-Infinity Algebras}. J. Homotopy Relat. Struct. 3 (2008), no. 1, p. 245--271.
\bibitem[LT]{LT} Riccardo Longoni, Thomas Tradler. \emph{Homotopy Inner Products for Cyclic Operads}. J. Homotopy Relat. Struct. 3 (2008), no. 1, p. 343--358.
\bibitem[TZ]{TZ} Thomas Tradler, Mahmoud Zeinalian \emph{Algebraic String Operations}. K-Theory 38 (2007), no. 1, p. 59--82.
\end{thebibliography}
\end{document}